\documentclass[reqno,12pt]{amsart}
\usepackage[utf8]{inputenc}
\usepackage[T1]{fontenc}
\usepackage{amsmath,amsfonts,amssymb,amsthm}
\usepackage{geometry,ulem}
\usepackage{tikz}
\usepackage{graphicx}
\graphicspath{{./images/}}
\usepackage[pdftex,
            colorlinks=true,
            linkcolor=blue,
            citecolor=red,
            pdfauthor={},
            pdftitle={},
            pdfcreator={pdflatex}]{hyperref}
\usepackage{verbatim}
\usepackage{color}

\newcommand{\bib}[4]{\bibitem{#1}{\sc#2: }{\it#3. }{#4.}}

\usepackage{mathrsfs}
\definecolor{verde}{RGB}{20,150,100}

\newcommand{\Z}{\mathbb Z}
\newcommand{\R}{\mathbb R}
\newcommand{\Rd}{\R^d}
\newcommand{\Rtwo}{\R^2}

\newcommand{\T}{\triangle}
\newcommand{\Tcal}{{\mathscr T}}
\newcommand{\Teq}{\T_{\text{eq}}}
\newcommand{\Topt}{\T_{\text{opt}}}
\newcommand{\Zhex}{Z^{\text{hex}}}
\renewcommand{\O}{\Omega}

\newcommand{\A}{\mathcal{A}}
\newcommand{\E}{\mathcal{E}}

\newcommand{\TT}{\mathcal T}
\renewcommand{\S}{\mathcal S}
\newcommand{\Rcal}{\mathcal{R}}

\renewcommand{\phi}{\varphi}
\newcommand{\eps}{\varepsilon}

\newcommand{\be}{\begin{equation}}
\newcommand{\ee}{\end{equation}}
\newcommand{\tq}{~ \middle | ~}
\newcommand{\btq}{~ \big | ~}
\newcommand{\Btq}{~ \Big | ~}
\newcommand{\dist}{\mathrm{dist}}
\newcommand{\per}{\mathrm{Per}}
\renewcommand{\div}{\mathrm{div}}
\renewcommand{\d}{\mathrm{d}}

\newcommand{\sm}{\setminus}
\newcommand{\vps}{\varepsilon}
\newcommand{\Om}{\Omega}
\newcommand{\sq}{\subseteq}

\newcommand{\lb}{\lambda}
\newcommand{\ov}{\overline}

\numberwithin{equation}{section}
\theoremstyle{plain}
\newtheorem{trm}{Theorem}[section]
\newtheorem{lem}[trm]{Lemma}

\newtheorem{prop}[trm]{Proposition}

\theoremstyle{remark}
\newtheorem{rem}[trm]{\bf Remark}

\title[inradius and honeycomb structures]{Shape optimization with inradius constraint and emergence of honeycomb structures}
\author{Dorin Bucur, Giuseppe Buttazzo, Alexis de Villeroch\'e}
\date{}

\begin{document}
\begin{abstract}
We maximize the average torsional rigidity among open subsets of $\mathbb{R}^2$ with fixed inradius whose complements consist of pairwise disjoint balls of fixed radius $\vps$ with mutual distances bounded below. We prove that, provided $\varepsilon$ is sufficiently small, the optimal value is asymptotically attained by sequences of domains whose complements exhibit a regular honeycomb structure. The proof is reduced, via Delaunay triangulations, to the analysis of a mixed Dirichlet--Neumann problem on triangular domains with circular cutouts at their vertices. Our approach is quite general and extends to other variational energies, such as the Cheeger constant, for which we also derive an explicit bound of the ratio $\varepsilon/$ inradius.
\end{abstract}

\maketitle

\section{Introduction}\label{Intro}

Honeycomb structures arise naturally in a wide variety of contexts. The most familiar example is the honeycomb built by bees, which inspired one of the oldest and most celebrated problems in geometry: how can the plane be partitioned into cells of equal area with the least possible total perimeter?
Known as the {\it Honeycomb Conjecture}, this problem dates back to the writings of Marcus Terentius Varro in 36 BC and was finally resolved by Hales in 2001 \cite{H01}.

The honeycomb structure emerges as a remarkably efficient configuration in a broad class of energy minimization problems arising in optimal partition and optimal distribution problems. Burdzy\footnote{Conjecture stated at an ESF research conference in Matrei, Austria, in 2005.}, Caffarelli--Lin \cite{CL07}, and Van den Berg \cite{HTOT09} independently conjectured that the regular hexagonal honeycomb should also be optimal for partitioning planar domains into cells minimizing either the sum or the maximum of the Dirichlet--Laplacian eigenvalues.
Unlike the classical {\it Honeycomb Conjecture}, this problem involves a partial differential equation on each cell of the partition. As a consequence, the optimality conditions induce a nontrivial interaction between the PDEs across the interfaces separating adjacent cells. Numerous aspects of this problem have since been extensively investigated, including the existence of optimal partitions, the regularity of the interfaces, and the qualitative properties of the cell boundaries. We refer the reader to the recent survey paper by Ognibene and Velichkov \cite{OV26} (see as well \cite{BFVV18}).

The asymptotic optimality of the honeycomb structure has so far been established only in a few specific settings, all of which are purely geometric in nature and do not involve partial differential equations. Besides Hales' theorem, examples include the Cheeger constant \cite{BuFr19}, the average distance to the boundary \cite{BB23}, and several optimal location problems in economics (see, for instance, \cite{MB01,BOS02}).
To the best of the authors' knowledge, however, no optimality result for a honeycomb structure has been established for a variational energy involving PDEs. The present paper provides such a result.

For a given open set $\O\subset\Rtwo$, we denote 
 $\rho(\O)$ the {\it inradius} of $\O$, defined as the radius of the largest disk that fits in $\O$, or equivalently as the supremum of the distance function to the boundary $\partial\O$:
\be\label{eq:inradius}
 \rho(\O)=\sup\left\{r\tq \exists x\in\O,\ B_r(x)\subset\O\right\}=\sup\left\{\dist(x,\partial\O)\tq x\in\O\right\}.
\ee
Above, $B_r(x)$ denotes the ball centered at $x$ of radius $r$. If $x=0$, then we simply denote it $B_r$.
Denoting by $\A$ a class of open subsets of $\Rtwo$, we formally study shape optimization problems writen as
\be\label{eq:SOPinrad}
\min\left\{\E(\O)\tq \O\in\A,\ \rho(\O) = 1\right\}.
\ee
Above, $\mathcal{E}$ denotes a shape functional, which in this paper will be either the torsional energy of $\Omega$ or the Cheeger constant. The optimization problem is clearly ill-posed if $\mathcal{A}$ is taken to be the class of all open sets. Indeed, the prescribed inradius plays no effective role for either the torsional rigidity or the Cheeger constant: removing or adding any countable set of points does not affect these functionals, since such sets have both zero Lebesgue measure and zero capacity, whereas it may drastically alter the inradius.

In the literature, Problem \eqref{eq:SOPinrad} has been extensively studied under the additional assumption that the admissible domains $\O$ are simply connected, primarily when $\E$ is the first Dirichlet eigenvalue of the Laplace operator (see \cite{BBO24} and the references therein). A distinctive feature of this optimization problem is that even the existence of an optimal domain remains a highly nontrivial issue (see \cite{BaCa11} for the analogous problem of maximizing the $L^\infty$-norm of the torsion function). Moreover, it has long been conjectured that, for a broad class of such shape functionals, the optimal value is asymptotically attained by the Goodman domain $\mathcal G$ (see \cite{BC94,G45}), which is obtained by removing infinitely many radial slits from the plane (see Figure~\ref{fig:Goodman}). By contrast, the Hersch pipe, which was also conjectured to be optimal for minimizing the first Dirichlet eigenvalue, was recently shown not to be optimal \cite{BBO24}.
\begin{figure}[h]
    \centering
    \includegraphics[scale = 0.4]{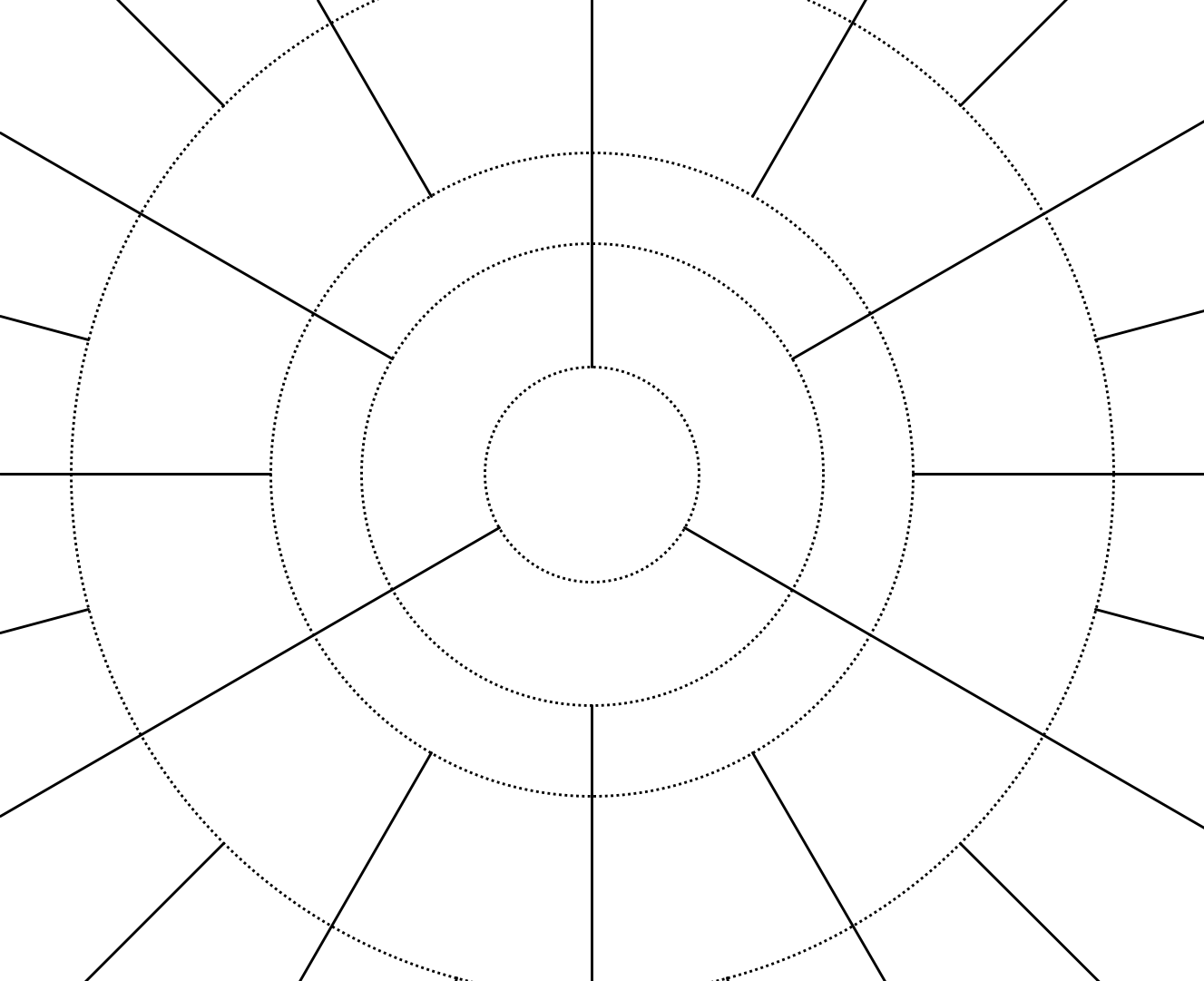}
    \hskip 1.8cm
    \includegraphics[scale = 0.25]{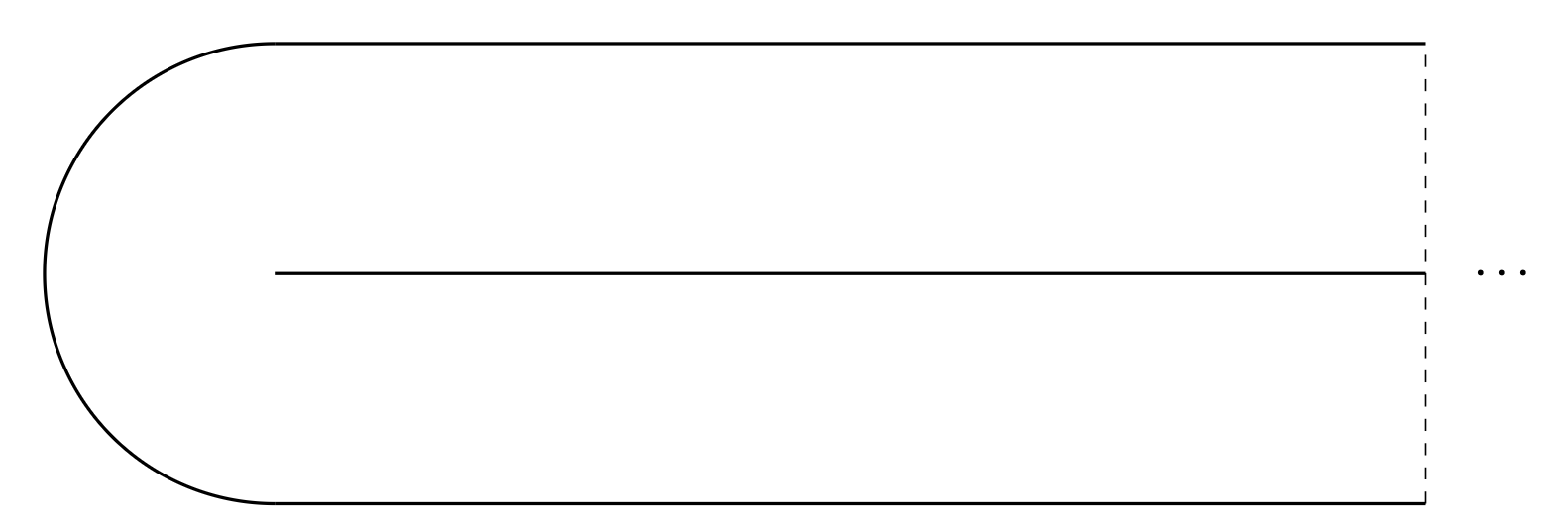}
    \caption{Goodman domain \label{fig:Goodman} and Hersch pipe}
\end{figure}

If simple connectivity is not required and $\E$ is a variational energy associated with the Dirichlet $p$-Laplacian (with $p>2$, so that points have positive $p$-capacity), one may expect the complements of optimal domains to be very small, possibly consisting of isolated points. For instance, it was shown in \cite{BBdV} the existence of a minimizer of the first Dirichlet eigenvalue of the $p$-Laplacian (for $p>2$) which is the complement of a discrete subset of the plane. Although the precise arrangement of these points remains unknown, it is conjectured that they form a regular hexagonal lattice. Such a characterization has so far been established only for the average $\infty$-torsional rigidity in \cite{BB23}, a problem that is again purely geometric in nature. In that setting, when $\rho=1$, the optimal domain is, in a suitable asymptotic geometric sense (namely, as the limit of a minimizing sequence), formally given by $\O^*=\Rtwo\setminus\Zhex$ where $\Zhex$ denotes the set of centers of the hexagonal tiling
\be\label{eq:Zhex}
\Zhex = \left\{\sqrt3\left(n+\frac m2,\frac{\sqrt3 m}{2}\right):
 n,m\in\mathbb Z\right\}.
\ee

In this paper we focus on the case $p=2$, which is critical for the capacity: this is the largest value of $p$ for which points have $0$ $p$-capacity. For this reason, in order to make the problem relevant, we ask the competing domains to satisfy an exterior $\eps$-ball condition, for some prescribed $\eps >0$. We work in the class 
$$\A_\vps=\{\Om \sq \R^2 ~|~ \Om \mbox { open and }\R^2\sm \Om = \ov {\cup_\alpha B_\eps(x_\alpha)}\}$$
$$\hskip 1cm=
\left\{\O = \Rtwo\setminus \big( Z \oplus \overline{B_\eps}\big) \tq Z \subset \Rtwo, Z \mbox { closed} \right\}.$$
For a given open set $\O$, we define the associated torsional rigidity $T(\O)$ as 
$$
T(\O) = \sup \left\{ \frac{\left(\int_\O u\right)^2}{\int_\O |\nabla u|^2} \tq u\in H^1_0(\O)\right\},
$$
which may be equivalently written 
\be \label{eq:deftorsion}
T(\O)= \sup\left\{2\int_\O u-\int_\O|\nabla u|^2\tq u\in H^1_0(\O)\right\}.
\ee
If $\O$ has finite measure, we denote its average by 
$$\E(\O) = \frac{T(\O)}{|\O|}, $$
and if its measure is infinite, we set
$$\E(\O):=\limsup _{R \to \infty} \sup_{x\in \R^2} \frac{T(\O\cap B_R(x))}{|\O\cap B_R(x)|}.$$
The main purpose of the paper is to solve the following problem,
\be \label{pb:torsion}
\sup \left\{\E(\O) \tq \O \in \A_\eps,\ \rho(\O) \le 1\right\},
\ee
and to prove that the supremum is attained for a set $\O$ of inradius equal to $1$, corresponding to a regular lattice $Z$ of hexagonal type. Clearly, in view of the definition of the average torsional rigidity of an unbounded open set, it is enough to consider 
the supremum over bounded open sets which satisfy the inradius constraint and the outer $\eps$-ball condition.

Note that the supremum in \eqref{pb:torsion} is finite since the torsion function of any bounded set $\O \in \A_\eps$ is bounded from above by a constant depending only on $\eps$ (and the upper bound of the inradius, equal to $1$ in our setting). This is a consequence of the fact that the first Dirichlet eigenvalue of such a domain is uniformly bounded from below by a constant depending on $\eps$ only and of the Van den Berg inequality: there exists a constant $C$ such that for every (bounded or unbounded) open set $\Om \sq \R^2$
$$\|w_\O\|_\infty \le \frac{C}{\lb_1(\Om)}.$$

Yet, since the functional $\E$ is not increasing for the inclusion of sets we are not able to give a full answer to Problem~\ref{pb:torsion} and we must restrict the class of admissible shapes to 
$$ \ov \A_\eps =\{\O\subset \Rtwo~ |~ \Rtwo \sm \O = Z\oplus B(0,\eps),\ Z\text{ discrete }, \forall p,q \in Z,\ |p-q| \ge \sqrt {3\eps}\}. $$
In Figure~\ref{fig:admopt}, we give an example of a set in $\ov \A_\eps$.

Our main result is the following.

\begin{trm} \label{TH1}
There exists $0 < \eps^* < \frac12$ such that for all $0<\eps < \eps^*$, the set 
$$\R^2 \sm [(1+ \eps)\Zhex\oplus\overline{B_\eps}]$$
 is a solution for the problem 
$$\sup \left\{\E(\Om) \tq \O \in \ov\A_\eps,\ \rho(\O) \le 1\right\}. $$
\end{trm}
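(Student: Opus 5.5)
Following the abstract, I would prove Theorem~\ref{TH1} through the Delaunay triangulation of the center set $Z$, reducing everything to a single-triangle problem.

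\textbf{Localization.} Take a bounded admissible $\O\in\ov\A_\eps$ with $\rho(\O)\le1$, inside a large ball $B_R(x)$. Let $\mathcal T$ be the Delaunay triangulation of the centers $Z$ lying in that region. The inradius constraint $\rho(\O)\le1$ says every point is within distance $1+\eps$ of some center. Hence every Delaunay triangle $\T$ has circumradius at most $1+\eps$, since its circumcenter is equidistant from its vertices and has no center strictly closer. The separation $|p-q|\ge\sqrt{3\eps}$ bounds the edges from below, which gives a fixed compact family of triangle shapes.

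For each triangle, set $\T_\eps=\T\setminus\bigcup_{v}\overline{B_\eps(v)}$ and define the mixed torsion
\[
T_N(\T_\eps)=\sup\Bigl\{2\int_{\T_\eps}u-\int_{\T_\eps}|\nabla u|^2 : u=0 \text{ on the arcs},\ u \text{ free on the edges}\Bigr\}.
\]
Restricting the torsion function of $\O\cap B_R(x)$ to each triangle shows that
\[
T(\O\cap B_R(x))\le\sum_{\T}T_N(\T_\eps)+o(R^2).
\]
Here the $o(R^2)$ accounts for boundary triangles, whose total area is $O(R)$, with the torsion function uniformly bounded by the van den Berg estimate. The measure also splits additively, $|\O\cap B_R(x)|=\sum|\T_\eps|+O(R)$. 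So it suffices to show
\[
\frac{T_N(\T_\eps)}{|\T_\eps|}\le\frac{T_N(\Teq{}_{,\eps})}{|\Teq{}_{,\eps}|}
\]
for every admissible triangle, where $\Teq$ is the equilateral triangle with circumradius $1+\eps$, i.e.\ side $\sqrt3(1+\eps)$.

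For that equilateral triangle, the hexagonal configuration $(1+\eps)\Zhex$ realizes equality. By the symmetry of the lattice, the Neumann condition on the edges is exactly satisfied by the global torsion function, so the Neumann solution glues across edges. Together with a limit $R\to\infty$, this gives the upper bound and the optimality of the honeycomb.

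\textbf{Triangle inequality (main obstacle).} The hard step is the pointwise inequality over all triangles with circumradius at most $1+\eps$. My approach is an asymptotic expansion as $\eps\to0$. Near each vertex the arc is a small Dirichlet hole, and the torsion behaves like a log-capacity problem. The leading term of $T_N(\T_\eps)/|\T_\eps|$ should be
\[
\frac{1}{|\T|}\sum_v \frac{\theta_v}{2\pi}\cdot(\text{area-weighted term})\cdot\log\frac1\eps+O(1).
\]
More concretely, the energy is dominated by $\frac{1}{4\pi}\sum_{\text{cells}}|\text{cell}|^2\log(1/\eps)$ divided by area. Here a cell is the Voronoi-type region attached to each vertex, obtained by cutting $\T$ along the perpendicular bisectors.

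So at leading order one must maximize a purely geometric quantity: the normalized sum of squared vertex-cell areas. Under circumradius $\le1+\eps$ with obtuse triangles allowed, I would show via a convexity/Jensen argument on the cell areas that it is maximized by the equilateral triangle of maximal size. This follows the mechanism of the average-distance problem in \cite{BB23}.

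Then I would handle the $O(1)$ remainder uniformly. Compactness of triangle shapes gives a strict gap at leading order away from equilateral. Near the equilateral triangle, I would need a second-order/local argument showing the equilateral is a strict local maximizer uniformly in small $\eps$, obtained by differentiating the Neumann problem with respect to vertex positions (Hadamard shape derivative). I expect this uniform local analysis, together with controlling obtuse triangles whose circumcenter lies outside, to be the main difficulty and the source of the threshold $\eps^*$.
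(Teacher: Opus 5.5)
\paragraph{The leading-order term is wrong.} Your localization and the honeycomb lower bound are the paper's Theorem~\ref{Th:reduction}. Your architecture for the triangle inequality (leading order as $\eps\to0$, then a second-order argument near the equilateral) is also the paper's. The gap is that your leading-order term is wrong, so the geometric problem you would solve is not the right one.

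Write $a_v$ for the area of your cell at $v$. Near a vertex of opening $\theta_v$, a region of area $a$ whose flux enters that hole carries $w\approx\frac{a}{\theta_v}\ln(|x-v|/\eps)$. It therefore contributes $\frac{a^2}{\theta_v}|\ln\eps|$; this is exactly $J_0(\alpha)=|\T^\alpha|^2/\alpha$. Your $\frac1{4\pi}\sum_v a_v^2$ has no angular weight, and the Voronoi cells are not the draining regions.

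Concretely, test with $Mz$, where $z=\ln(|x-v|/\eps)/\ln(\delta/\eps)$ on $B_\delta(v)\setminus B_\eps(v)$ and $z=1$ elsewhere. Using $\sum_v\theta_v=\pi$, this gives $\Tcal(\T,\eps)\ge\frac{|\T|^2}{\pi}|\ln\eps|\,(1+o(1))$ for every fixed triangle. That already exceeds your $\frac{1}{4\pi}\sum_v a_v^2|\ln\eps|\le\frac{|\T|^2}{4\pi}|\ln\eps|$.

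The shape dependence is also wrong, not just the constant. Combined with the six-triangle upper bound of Proposition~\ref{prop:Convergence}, this lower bound pins the leading coefficient of $\Tcal/|\T|$ at $3\sqrt3/(4\pi)$ for $\Teq$. It pins it at $1/\pi$ for the isosceles right triangle of circumradius $1$, a ratio of $4/(3\sqrt3)$. Your $\sum_v a_v^2/|\T|$ gives the ratio $\sqrt3/2$, so no change of constant repairs it. Note also that Jensen for the convex function $x\mapsto x^2$ makes equal cells minimize $\sum_v a_v^2$ at fixed area, so it points the wrong way.

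The paper instead splits an acute triangle through its circumcenter into six right triangles $\T^{\alpha}$. Each carries the hole at one vertex, with Neumann conditions elsewhere. This gives an upper bound with leading term $F_0$, which is sharp at the equilateral ($E_0=|\Teq|/\pi$). Then $F_0\le E_0$ needs a genuine argument: $f(\alpha)=\frac{\sin^2(2\alpha)}{\alpha}-\frac{3\sqrt3}{\pi}\sin(2\alpha)$ has a single inflection point. This forces two equal angles at any critical point, and a one-variable check finishes.

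\paragraph{The remaining steps lack what makes the expansion usable.} Compactness of triangle shapes is not uniform in $\eps$: the edge lower bound tends to $0$ with $\eps$. A $|\ln\eps|+O(1)$ expansion with remainder uniform over triangles with short edges, or angles near $0$ or $\pi$, is precisely what must be proved. The paper handles this in four ways:
\begin{itemize}
\item It removes obtuse triangles by reflecting across the longest side into a kite. The kite splits into two isosceles triangles with no larger circumradius and no smaller ratio, so the circumcenter-outside case you flag never arises.
\item It excludes triangles whose disks overlap by a one-dimensional slicing bound $\Tcal/|\T\setminus(V_\T\oplus B_r)|\le 8/3$, against the divergence of the equilateral ratio.
\item It forces circumradius exactly $1$ via $\Tcal(t\T,tr)=t^4\Tcal(\T,r)$ and monotonicity in the Dirichlet region.
\item It proves error bounds $\overline C|\T^\alpha|$ uniform in $\alpha\in(0,\pi/2)$ (Lemmas~\ref{lem:PoincarePsi} and~\ref{lem:zeroOrderBounds}). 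These control near-degenerate angle triples in Proposition~\ref{prop:MaxFr}.
\end{itemize}
Finally, the decomposition turns the local step into a one-parameter statement: $J_r''/|\ln r|\to J_0''$ uniformly on compact subintervals. After that, Lagrange multipliers and the mean value theorem reduce everything to the sign of $J_0''(\pi/6)+\frac{\sqrt3}{2}E_0$. A Hadamard derivative of the full triangle problem in the vertex positions, as you suggest, would need second-order control of the $O(1)$ remainder for the full mixed problem. You give no mechanism for that.
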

 
\begin{figure}[ht]
\centering
\includegraphics{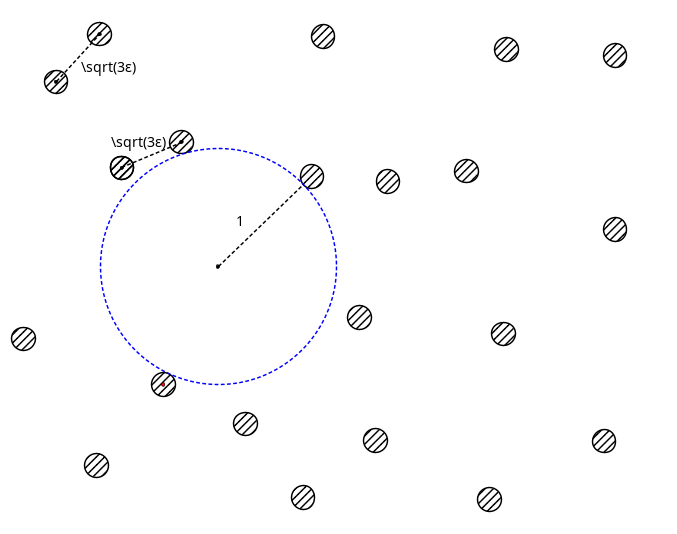}
\hspace{5em}
\includegraphics{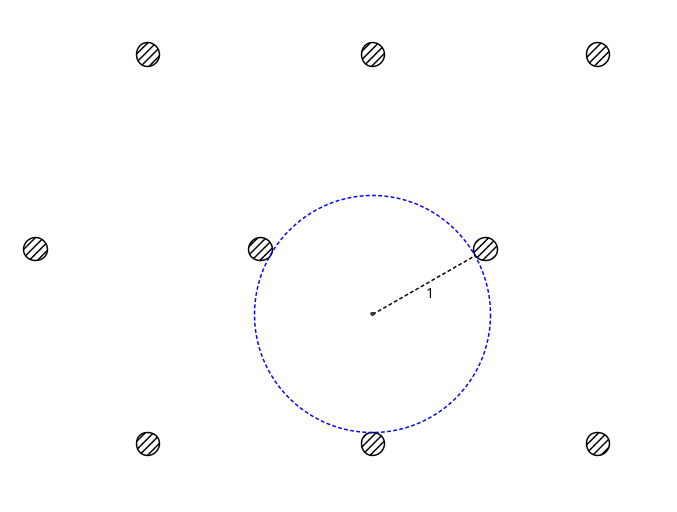}
\caption{admissible set in $\ov\A_\eps$ (left) and optimal set (right)}
\label{fig:admopt}
\end{figure}

Above, $\Zhex$ is the set of the centers of the hexagonal tessellation of circumradius $1$, defined in \eqref{eq:Zhex}. To prove this result, we follow, for the geometric part of the analysis, the strategy developed in \cite[Theorem 1.2]{BB23}. In contrast with \cite[Theorem 1.2]{BB23}, however, the main new difficulty lies in handling the variational energy itself, which is defined through the solution of an elliptic PDE, namely the torsion function. More precisely, the first part of the proof closely parallels that of \cite[Theorem 1.2]{BB23} and consists in reducing the original optimization problem over general domains to the analysis of a mixed boundary value problem on triangles $\T$ with small discs removed around their vertices. The second part is fundamentally different: instead of dealing with a purely geometric functional on the triangles, we must analyze an elliptic PDE with mixed boundary conditions.

Precisely, we generically denote by $\T$ a triangle of vertices $V_\T=\{A_1, A_2, A_3\}$. Given $r >0$, we introduce the mixed torsional rigidity of $\T \sm [V_\T\oplus \ov B_r]$, by
\be\label{eq:defT*}
\Tcal (\T,r)=\sup\left\{2\int_\T u\,dx -\int_\T |\nabla u|^2\,dx \tq u \in H^1(\T), \ \forall i,\ u = 0 \text{ on } \T\cap [V_\T\oplus \ov B_r]\right\},
\ee
which involves Dirichlet boundary conditions on the boundary of the small disks surrounding each vertex, and Neumann boundary conditions of the remainder of the boundary of the triangle. 
When $|\T\sm [V_\T\oplus \ov B_r]| = 0$ we have $\Tcal (\T,r)=0$ and when $|\T\sm [V_\T\oplus \ov B_r]| > 0$, the associated torsion function $w_r$ solves 
$$\begin{cases}
-\Delta w_r = 1 & \text{in } \T\setminus [V_\T\oplus \ov B_r]\\
w_r = 0 & \text{on } \T\cap[V_\T\oplus \ov B_r]\\
\partial_n w_r = 0 & \text{on } \partial\T\setminus[V_\T\oplus \ov B_r].
\end{cases}$$
We naturally have
$$\Tcal(\T,r) = \int_\T w_r\,dx>0.$$

In the following theorem, we formalize the relationship between \eqref{pb:torsion} and the mixed torsion problem on punctured triangles.   It is for this step that we need the restricted class and consider $\ov\A_\eps$ instead of $\A_\eps$, specifically to prove the the second inequality.  
\begin{trm}\label{Th:reduction}
Let $\Teq$ denote the equilateral triangle of circumradius $1$ and
assume $0 < \eps <\frac 12$. Then, we have the following
$$
\frac{\Tcal((1+\eps)\Teq, \eps)}{(1+\eps)^2|\Teq| - \frac{\pi}{2}\eps^2}\le \sup \left\{\E(\Om) \tq \O \in  \ov\A_\eps  ,\ \rho(\O) = 1\right\} $$
$$
\le\sup\left\{\frac{\Tcal (\T, \eps)}{| \T \sm [V_{\T} \oplus B_\eps]|} \tq\T \text{ triangle},\Rcal(\T) \le 1+\eps, |\T \sm [V_{\T} \oplus B_\eps]| >0 \right\}.
$$
\end{trm}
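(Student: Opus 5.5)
The two inequalities are proved separately: the lower bound by an explicit periodic construction, the upper bound by a Delaunay decomposition combined with the variational characterization of the torsion.

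\emph{Lower bound.} Take $Z=(1+\eps)\Zhex$ and $\O=\Rtwo\sm[Z\oplus\ov B_\eps]$.

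First I check admissibility. Neighbouring points of $\Zhex$ are at distance $\sqrt3$, so points of $Z$ are at distance $\sqrt3(1+\eps)\ge\sqrt{3\eps}$. The farthest points from $Z$ are the circumcentres of the Delaunay triangles $(1+\eps)\Teq$, at distance $1+\eps$ from $Z$, so $\rho(\O)=1$.

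Next I relate $\O$ to one triangle. Let $w$ be the torsion function of the punctured cell $(1+\eps)\Teq\sm[V\oplus\ov B_\eps]$ with Neumann conditions on the triangle's sides. I extend $w$ to the whole plane by successive reflections across the sides of the triangular tiling; this extension is consistent because the tiling by $(1+\eps)\Teq$ is generated by these reflections. The extended function is $H^1_{loc}$ and vanishes on $Z\oplus\ov B_\eps$.

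Then I estimate $\E(\O)$ from below. For large $R$, multiply the extension by a cutoff $\phi_R$ equal to $1$ on $B_{R-1}$, supported in $B_R$, with $|\nabla\phi_R|\le 2$. Plugging this into \eqref{eq:deftorsion} on $\O\cap B_R(x)$ gives the integral of $2w-|\nabla w|^2$ summed over $N_R\sim cR^2$ full triangles, each contributing $\Tcal((1+\eps)\Teq,\eps)$. The boundary-layer error is $O(R)$, since $w$ and $\nabla w$ are bounded. Likewise $|\O\cap B_R|$ equals $N_R$ times the area of one punctured triangle, plus $O(R)$. The three vertex sectors of the triangle have angle $\pi/3$ each, so they remove $\frac{\pi}{2}\eps^2$ in total, and the punctured area is $(1+\eps)^2|\Teq|-\frac\pi2\eps^2$. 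Dividing and letting $R\to\infty$ gives the left inequality.

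\emph{Upper bound.} Fix a bounded admissible $\O\in\ov\A_\eps$ with $\rho(\O)=1$; by the definition of $\E$ this suffices. Let $Z$ be its discrete set of centres, restricted to the relevant finite part, completed if needed by a far frame of points so that $\O$ lies in the convex hull. Take the Delaunay triangulation of $Z$.

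Each Delaunay triangle has an empty circumdisc. Its circumcentre $c$ therefore satisfies $\dist(c,Z)=\Rcal(\T)$. If $c\in\O$, the inradius bound gives $\Rcal(\T)-\eps\le 1$, i.e.\ $\Rcal(\T)\le1+\eps$. Triangles whose circumcentre lies outside the hull, or near the boundary, need the same care as in \cite[Theorem 1.2]{BB23}. I would handle them by the same argument, or by adding frame points so that every circumcentre is controlled.

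The separation $|p-q|\ge\sqrt{3\eps}$ is where the restricted class enters. Together with $\eps<\frac12$, I expect it to guarantee that each disc $B_\eps(p)$ meets only Delaunay triangles having $p$ as a vertex. Then $\O\cap\T=\T\sm[V_\T\oplus\ov B_\eps]$ for every $\T$, and the triangles partition $\O$ up to null sets. Proving this geometric containment, i.e.\ that no disc centred at a non-vertex intrudes into a triangle, using the empty-circle property and the separation bound, is the step I expect to be the main obstacle.

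Once the containment holds, let $u=w_\O\in H^1_0(\O)$ and restrict it to each $\T$. The restriction vanishes on $\T\cap[V_\T\oplus\ov B_\eps]$, so it is admissible in \eqref{eq:defT*}, which yields
$$T(\O)=\sum_\T\Big(2\int_{\T}u-\int_\T|\nabla u|^2\Big)\le\sum_\T\Tcal(\T,\eps).$$
Triangles with null punctured area contribute zero. Since $|\O|=\sum_\T|\T\sm[V_\T\oplus B_\eps]|$, the inequality $\sum a_i/\sum b_i\le\sup a_i/b_i$ gives the right bound.
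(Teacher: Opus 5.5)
Your lower bound matches the paper's argument. Your reflection extension is legitimate even without the symmetry of the cell torsion function that the paper invokes, because $(1+\eps)\Teq$ is a fundamental domain for the group generated by reflections in its sides. Your upper-bound strategy is also the paper's: Delaunay cells, restriction of the global torsion function to each cell, the empty circle property giving $\Rcal(\T)\le 1+\eps$, and the mediant inequality. As written, however, the upper bound has two gaps.

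\textbf{First gap: the truncation step.} The reduction to a \emph{bounded} $\O\in\ov\A_\eps$ is vacuous. The complement of such an $\O$ is a union of pairwise disjoint discs, so $\O$ is never bounded, and $\rho(\O)=1$ forces $Z$ to be $(1+\eps)$-dense, hence infinite. The definition of $\E$ requires a bound on $T(\O_R)/|\O_R|$ for $\O_R=\O\cap B_R(x)$, uniformly in $x$. For these sets your identities $\O\cap\T=\T\sm[V_\T\oplus\ov B_\eps]$ and $|\O|=\sum_\T|\T\sm[V_\T\oplus B_\eps]|$ fail on the cells cut by $\partial B_R(x)$. Truncating $Z$ and adding a frame does not repair this. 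Cells of the truncated set near its boundary have no empty circle relative to $Z$, so you lose $\Rcal(\T)\le 1+\eps$ exactly there.

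The paper instead proceeds as follows:
\begin{enumerate}
\item It keeps the Delaunay triangulation of all of $Z$. Since $Z$ is $\sqrt{3\eps}$-separated and $(1+\eps)$-dense, the cells tile the plane and all satisfy $\Rcal\le 1+\eps$.
\item It passes from $\O_R$ to $\O_R^{\text{ext}}=\O_R\cup\bigcup_{\T\in\ov\TT(Z)}(\T\cap\O)$ by monotonicity of $T$, and decomposes over the finitely many cells meeting $\O_R$.
\item It pays the factor $|\O_R^{\text{ext}}|/|\O_R|$. This tends to $1$ uniformly in $x$, because the excess lies in an annulus of width $2(1+\eps)$ while $|\O_R|\ge cR^2$ by the separation.
\end{enumerate}

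\textbf{Second gap: the containment.} The containment you flag as the main obstacle is left unproved. The paper only asserts it too, but it has a short proof, and this is exactly where $\sqrt{3\eps}$ and $\eps<\frac12$ enter.

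Let $p\in Z$ not be a vertex of $\T=ABC$, and let $q$ be the point of $\T$ nearest to $p$ (note $p\notin\T$). Suppose $|p-q|<\eps$. Since $|p-z|\ge\sqrt{3\eps}>\eps$ for every $z\in V_\T$, the point $q$ lies in the interior of an edge, say $[B,C]$, with $p-q\perp BC$. Hence $|q-B|^2=|p-B|^2-|p-q|^2>3\eps-\eps^2$, and likewise for $C$.

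Let $O$ and $R$ be the circumcentre and circumradius. The power of $q$ with respect to the circumcircle gives $|O-q|^2=R^2-|q-B|\,|q-C|<R^2-3\eps+\eps^2$. Since $p$ lies outside the open circumdisc,
$R\le|O-p|\le|O-q|+|q-p|<\sqrt{R^2-3\eps+\eps^2}+\eps$.
Because $R\ge\frac12|B-C|\ge\frac12\sqrt{3\eps}>\eps$, squaring yields $R>\frac32$. This contradicts $R\le 1+\eps<\frac32$.

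Thus $B_\eps(p)\cap\T=\emptyset$ for every non-vertex $p$, so $|\O\cap\T|=|\T\sm[V_\T\oplus\ov B_\eps]|$. With both repairs, your mediant argument goes through.
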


As a consequence, the proof of Theorem~\ref{TH1} comes (up to rescaling) to solving the maximization problem on triangles
\be\label{pb:triangle}
\sup\left\{\frac{\Tcal(\T,r)}{|\T\sm [V_\T \oplus B_r]|} \tq \T \mbox{ triangle},\Rcal(\T) \le 1, |\T \sm [V_{ \T} \oplus B_r]| >0\right\}
\ee
and to prove that the solution is the equilateral triangle $\Teq$ of circumradius $1$.

Even if we do not use it, it is convenient to observe that if $|\T \sm [V_{\T} \oplus B_r]|=0$, it is natural to formally set
$$ \frac{\Tcal(\T,r)}{|\T\sm [V_\T \oplus B_r]|} =0, $$
as a consequence of the behavior of the mapping $r\to \frac{\Tcal(\T,r)}{|\T\sm [V_\T \oplus B_r]|}$ at $0$ (this fact is implicitly proved in Lemma \ref{d71}). 

As well, notice that in general
$$ |\T\sm [V_\T \oplus B_r]| \ge |\T| - \frac{\pi r^2}{2}.$$
For right or acute triangles $\T$, equality occurs when each edge is not smaller than $2r$. If $\T$ is obtuse, for equality to occur we also need that every height is not smaller than $r$. We call these triangles {\it non degenerate}.
 
The key technical result of the paper is the following.
\begin{trm}\label{TH2}
There exists $r^*>0$ such that for all $0<r<r^*$, the equilateral triangle $\Teq$ of circumradius $1$ is the unique solution to Problem~\eqref{pb:triangle}.
\end{trm}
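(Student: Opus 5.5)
The plan is to compute the small-$r$ asymptotics of $\Tcal(\T,r)$ uniformly over triangles with $\Rcal(\T)\le 1$, and then reduce Problem~\eqref{pb:triangle} to an explicit geometric optimization for the leading term.

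\textbf{Leading-order asymptotics.} Since $p=2$ is capacity-critical, the torsion function $w_r$ is large, of order $|\log r|$. Write $w_r\approx c_r+\psi$, where $c_r$ is a constant to be determined. Away from the holes, $\psi$ solves $-\Delta\psi=1$ with Neumann data on $\partial\T$. Near a vertex $A_i$ of angle $\theta_i$, $\psi$ behaves like the logarithmic Green-type singularity $\frac{\,\cdot\,}{\theta_i}\log|x-A_i|$. Flux balance fixes the coefficients: the total flux through the three arcs equals $|\T\sm[V_\T\oplus B_r]|$. A matched-asymptotics (or variational upper/lower bound) argument then gives
\[
\frac{\Tcal(\T,r)}{|\T\sm[V_\T\oplus B_r]|}= \frac{|\T|}{\sum_i\theta_i}\,\log\frac1r + F(\T) + o(1).
\]
Here $\sum_i\theta_i=\pi$, and $F(\T)$ is a regular term involving the Neumann Green function of $\T$ and the distribution of flux among the vertices. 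I would prove this by a lower bound, testing~\eqref{eq:defT*} with an explicit function that is logarithmic near each vertex and glued to a Neumann corrector, and a matching upper bound, via the dual formulation and a divergence-free vector field with prescribed flux.

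\textbf{Reduction to geometry.} The leading term is $\frac{|\T|}{\pi}\log\frac1r$. Among triangles with $\Rcal(\T)\le1$, the area $|\T|$ is uniquely maximized by $\Teq$, with a quantitative gap $|\Teq|-|\T|\ge c\,\dist(\T,\Teq)^2$ modulo rigid motions. So for $r$ small, any maximizer, or any triangle with ratio at least that of $\Teq$, must satisfy
\[
|\Teq|-|\T|\lesssim \frac{\mathrm{osc}\,F}{\log(1/r)}.
\]
This forces $\T$ close to $\Teq$. To make this work I need $F$ bounded uniformly, including for degenerate and obtuse triangles. For thin triangles, the ratio is bounded crudely, for example by $\|w_r\|_\infty$, and for small area it is dominated by $|\T|\log(1/r)$. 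Near-degenerate triangles are therefore excluded, because their area is small compared with that of $\Teq$.

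\textbf{Local uniqueness.} It remains to show that $\Teq$ is the unique maximizer in a neighbourhood. This is where I expect the main obstacle. The first-order variation of the ratio at $\Teq$ vanishes by symmetry, among triangles with $\Rcal=1$. The second variation is
\[
\frac{1}{\pi}\log\frac1r\, D^2|\T| + D^2F .
\]
The first term is negative definite, transversal to rotations, with a size that blows up as $r\to0$. Hence it dominates provided $F$ is $C^2$ in the shape parameters near $\Teq$, uniformly in small $r$. Establishing this uniform $C^2$ control (shape derivatives of a mixed Dirichlet--Neumann problem on domains with corners and small holes) is the hardest step. I would attempt it by mapping nearby triangles to $\Teq$ with affine maps that fix the discs near the vertices, so that all dependence moves into the coefficients. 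Combining the local and global parts then gives $r^*$.
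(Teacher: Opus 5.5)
Your leading-order picture is correct, and it is a genuinely different route from the paper's. With fluxes $q_i\approx\theta_i|\T|/\pi$ into the three discs, the ratio in \eqref{pb:triangle} equals $\frac{|\T|}{\pi}\log\frac1r+O(1)$, so your limit problem is plain area maximization. The paper's leading coefficient is $F_0=\sum_i J_0(\alpha_i)/\sum_i|\T^{\alpha_i}|$, with $J_0(\alpha)=|\T^\alpha|^2/\alpha$. By Cauchy--Schwarz this is $\ge|\T|/\pi$, with equality only at $\Teq$, and it has to be maximized separately (Proposition~\ref{prop:maximalityF0}).

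As written, however, the proposal does not prove the theorem: the two estimates that carry all the analytic weight are asserted, not established.

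(i) The global step needs a bound like ratio $\le\frac{|\T|}{\pi}\log\frac1r+C$, or at least ratio $\le c\log\frac1r$ with $c<|\Teq|/\pi$ away from $\Teq$. It must hold uniformly over the non-compact family of triangles with $\Rcal(\T)\le1$, including overlapping discs, diameter comparable to $r$, and an angle tending to $0$. Bounding the ratio by $\|w_r\|_\infty$ is no shortcut. $\|w_r\|_\infty$ is itself of order $\log\frac1r$, and showing that its coefficient is small when $|\T|$ is small is exactly the uniform estimate in question.

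(ii) The local step needs the second shape derivatives of the remainder to be $o(\log\frac1r)$ uniformly on a fixed neighbourhood of $\Teq$. Uniform $C^2$ bounds are more than necessary, but you leave even this open. Moreover, the deformations cannot be affine maps that ``fix the discs''. Changing the angle at $A_i$ while keeping $B_r(A_i)$ round forces an angular stretching right at $\partial B_r(A_i)$, where $|\nabla w_r|^2\sim|x-A_i|^{-2}$. So the hole regions contribute at order $\log\frac1r$ to the second variation. You would have to show that everything assembles into $\frac1\pi D^2|\T|\log\frac1r+o(\log\frac1r)$, which requires an explicit comparison function carrying all three logarithmic singularities and control of its material derivatives.

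The device the paper uses to make both steps provable is a reduction to one-hole right triangles. Cutting an acute triangle through its circumcenter gives six triangles $\T^{\alpha_i}$, each with a single Dirichlet disc at a vertex of $\T$ and Neumann conditions elsewhere. This yields $\Tcal(\T,r)\le 2\sum_i J_r(\alpha_i)$, with equality for $\Teq$. It replaces a two-parameter, three-hole problem by a one-parameter, one-hole problem in which the explicit radial function $v_{r,\alpha}$ absorbs the singularity. The two estimates then become tractable:
\begin{itemize}
\item the uniform zeroth-order bound, Proposition~\ref{prop:Convergence}(1), uniform in $\alpha\in(0,\pi/2)$, together with Lemma~\ref{d71} for obtuse and degenerate triangles;
\item the convergence $J_r''/|\ln r|\to J_0''$ near $\pi/6$.
\end{itemize}
Local uniqueness then reduces to a scalar Lagrange-multiplier and mean-value argument contradicting $J_0''(\pi/6)+\frac{\sqrt3}{2}E_0<0$. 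Your route may well be completable, but the steps that constitute the proof are currently deferred.
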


Similar questions naturally arise for the Dirichlet eigenvalues of the $p$-Laplacian with $p\in[1,2]$. In fact the reduction argument stated in Theorem~\ref{Th:reduction} is fairly general and extends to a wide range of variational shape functionals, such as the first Dirichlet eigenvalue of the $p$-Laplacian or the averaged $p$-torsional rigidity, for every $p \in (1, +\infty)$. Therefore, proving the optimality of the honeycomb distribution for these functionals ultimately amounts to establishing a suitable counterpart of the triangular inequality, in the spirit of Theorem~\ref{TH2}.

To underline the main lines of this problem and compare it with the torsion energy, we conclude the paper by briefly discussing the case of the Cheeger constant, which formally corresponds to the limiting case $p=1$. For an arbitrary open set $\Om$, the Cheeger constant of $\O$ is defined by
\be\label{eq:defcheeger}
h(\O)=\inf\left\{\frac{\per(E)}{|E|} \middle|\ E\mbox{ measurable, }E\Subset\O\right\},
\ee
where $\per(E)$ denotes the perimeter of $E$ in the sense of De Giorgi. While we obtain the same type of result as in Theorem \ref{TH1}, our setting also allows us to derive explicit constants, thanks to the purely geometric nature of the problem.

\begin{trm}\label{Th:cheeger}
For any $0<\eps<0.23$, the open set 
$$\O^* = \Rtwo\setminus \big((1+\eps)\Zhex\oplus \overline{B_\eps}\big),$$
is a solution for the problem 
$$\min\big\{h(\O)~|~\rho(\O) = 1,\ \O\in \A_\eps\big\}.$$
\end{trm}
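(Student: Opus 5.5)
The plan is to follow the same two-step scheme as for Theorem~\ref{TH1}: a reduction to triangles via Delaunay triangulations, followed by an explicit geometric inequality on punctured triangles. This time the triangle inequality can be quantified.

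\textbf{Reduction.} Let $\O=\Rtwo\setminus(Z\oplus\ov B_\eps)$ with $\rho(\O)=1$, and consider a Delaunay triangulation of $Z$, restricted first to a large ball and then passed to the limit. Every Delaunay triangle $\T$ has an empty circumdisk. Its circumcenter lies in $\O$ or on $\partial \O$, so $\Rcal(\T)\le 1+\eps$.

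Take an admissible $E\Subset\O$. Since $\per(E)\ge\sum_\T \per(E\cap\T;\mathrm{int}\,\T)$ and $|E|=\sum_\T|E\cap\T|$, we get $\per(E)/|E|\ge\min_\T \per(E\cap \T;\T)/|E\cap\T|$, with the perimeter counted relative to $\T$ so that the edges of $\T$ are free (Neumann). Hence
$$h(\O)\ge \inf_\T h_N(\T,\eps),\qquad h_N(\T,\eps):=\inf\left\{\frac{\per(E;\mathrm{int}\,\T)}{|E|}\tq E\subset \T\setminus(V_\T\oplus \ov B_\eps)\right\}.$$
Here it is not clear whether the inf over $\T$ can be arranged in a finite setting; working with $\limsup$ over balls $B_R$ handles the boundary triangles, since their contribution is $O(R)$ against $O(R^2)$.

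\textbf{Upper bound.} For the converse I would use the competitor $\O^*$. Its Delaunay cells are equilateral of circumradius $1+\eps$, and by symmetry and reflection the Cheeger set of $\O^*$ decomposes into Cheeger sets of the relative problem in each cell. So $h(\O^*)=h_N((1+\eps)\Teq,\eps)$, and it remains to show
$$h_N(\T,\eps)\ge h_N((1+\eps)\Teq,\eps)\quad\text{whenever }\Rcal(\T)\le 1+\eps.$$

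\textbf{Triangle inequality and the explicit $0.23$.} The relative Cheeger problem in a triangle with disks removed at the vertices has a known structure. The optimal $E$ is $\T\setminus(V_\T\oplus B_{s})$ for some $s\ge\eps$, or more precisely the region left after removing, at each vertex, a circular arc of radius $s$ meeting the edges orthogonally and tangent to the inner Cheeger-type region. For nondegenerate triangles this gives
$$\frac{\per}{|E|}=\frac{\pi s}{|\T|-\pi s^2/2},$$
because the arcs at the three vertices have total angle $\pi$. Minimizing over $s\ge\eps$ gives $s=\eps$ as long as $\eps$ is small relative to $|\T|$, which yields
$$h_N(\T,\eps)=\frac{\pi\eps}{|\T|-\pi\eps^2/2}.$$
This is decreasing in $|\T|$, and among triangles with $\Rcal(\T)\le1+\eps$ the area is maximized uniquely by the equilateral one. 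The equilateral triangle therefore wins.

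\textbf{Main obstacle.} The hard step is justifying that the minimal configuration is given by these vertex arcs. Two things could go wrong. First, degenerate (very obtuse or thin) triangles may have edges shorter than $2\eps$ or heights below $\eps$, so the formula fails. Second, competitors separating a thin part of the triangle could do better. For these cases I would use the lower bound $|E|\le|\T|-\pi\eps^2/2$ together with a relative isoperimetric inequality in the angle at each vertex: each arc around a vertex of angle $\theta$ costs at least $\theta\eps$. Thin triangles have small area, which makes $h_N$ large. Comparing against $\pi\eps/((1+\eps)^2\frac{3\sqrt3}{4}-\pi\eps^2/2)$ then produces an explicit polynomial inequality in $\eps$, and this is where the threshold $\eps<0.23$ should come from.
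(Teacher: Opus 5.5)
\textbf{There is a genuine gap.} Your Delaunay reduction and the final area comparison are the same as in the paper. But the step that carries the whole theorem is not proved: the lower bound $h_N(\T,\eps)\ge h_N((1+\eps)\Teq,\eps)$ for every triangle with $\Rcal(\T)\le 1+\eps$.

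Your claim that the optimal set is $\T\setminus(V_\T\oplus B_s)$ for some $s\ge\eps$ is unjustified, and false for $s>\eps$. Arcs centred at the vertices are concave as seen from $E$, whereas a free boundary of a Cheeger set has curvature $+h_N$. Moreover, $s\mapsto \pi s/(|\T|-\pi s^2/2)$ is increasing, so this family only gives $h_N(\T,\eps)\le \pi\eps/(|\T|-\pi\eps^2/2)$. That is an upper bound, which is the wrong direction for a non-equilateral $\T$. What you actually need is that $\T\setminus(V_\T\oplus\ov B_\eps)$ is its own Neumann Cheeger set, and that is exactly what must be proved.

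Your fallback (each vertex of angle $\theta$ costs at least $\theta\eps$ by the isoperimetric inequality in the angle) does not close this. It only controls competitors whose complement in $\T$ splits into pieces, each containing one vertex disc and staying inside that vertex's angle. It gives nothing for competitors whose free boundary runs from one disc to another, or cuts across a thin part of $\T$, which is precisely where the formula fails. Likewise, ``thin triangles have small area, so $h_N$ is large'' only makes the self-Cheeger value large, not the infimum.

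The paper closes this gap with a dichotomy. It first replaces obtuse triangles by better acute or right ones, by reflecting across the longest side. For an acute or right triangle of circumradius $1$, one of two cases holds:
\begin{itemize}
\item either $\T\setminus(V_\T\oplus\ov B_r)$ is self-Cheeger, so $h_N(\T,r)=\pi r/(|\T|-\pi r^2/2)$ and the area comparison applies;
\item or the Cheeger set has a free arc joining two of the discs.
\end{itemize}
In the second case, place those two vertices on a horizontal line and slice vertically. Using $\int_0^l|v'|\ge\frac1l\int_0^l|v|$ for $v(0)=0$, with slices of length at most $2$, this gives $h_N(\T,r)\ge\frac12$. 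Meanwhile $h_N(\Teq,r)\le\frac{4\pi r}{3\sqrt3-2\pi r^2}<\frac12$ for $r<0.19$.

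Next, rescale a Delaunay triangle of circumradius $R\le1+\eps$ to circumradius $1$, use monotonicity in the Dirichlet region, and apply the dichotomy at $r=\frac{\eps}{1+\eps}$. This yields $h_N(\T,\eps)\ge\frac1{1+\eps}h_N(\Teq,\frac\eps{1+\eps})$. So the threshold is $\frac\eps{1+\eps}<0.19$, i.e.\ $\eps<0.19/0.81\approx0.23$; it does not come from an isoperimetric polynomial inequality.

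Two smaller points:
\begin{itemize}
\item In $\A_\eps$ the set $Z$ is only closed. Before triangulating you need the paper's approximation by discrete sets $Z_n$, with $h(\O_n)\to h(\O)$ and $\rho(\O_n)\to 1$.
\item Since $E$ is bounded, only finitely many triangles meet it, so no $\limsup$ over balls is needed in the lower bound.
\end{itemize}
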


  Note that in this case it is not needed to reduce the class of admissible shapes as the Cheeger constant is decreasing for the inclusion on sets.  

\begin{rem}
The value of $\eps^*=0.23$ above is explicit, but not sharp. Moreover, for $\eps\le \eps^*$ the value $h(\O^*)$ is explicit,
$$h(\O^*) = \frac{4\pi\eps}{3\sqrt3(1+\eps)^2-2\pi\eps^2}. $$
We expect that Theorem \ref{Th:cheeger} should hold for any $\eps>0$. 
\end{rem}

\noindent {\bf Structure of the paper.} The proof of Theorem \ref{TH1} is a direct consequence of Theorems \ref{Th:reduction} and \ref{TH2}. Section~\ref{sec:proofTHreduc} is devoted to the proof of Theorem~\ref{Th:reduction}. Section~\ref{sec:proofTH2} gives the lines of the proof of Theorem~\ref{TH2} and splits it in a series of three propositions proved, respectively, in Sections \ref{sec:proofPropConv}, \ref{sec:proofPropMaxF0} and \ref{sec:proofPropMaxFr}. Section~\ref{sec:Cheeger} contains the proof of Theorem~\ref{Th:cheeger}. 

\section{Proof of Theorem~\ref{Th:reduction}}\label{sec:proofTHreduc}

\noindent{\bf Delaunay triangulations and Voronoi diagrams.} In order to split the torsion energy in cells that we can handle individually, we rely on Delaunay triangulations \cite{D34} of clouds of points. Let us first recall their definition and some of their properties; we refer the interested reader to the monograph \cite{AKL13} for more details.

 Consider a discrete set of points $S$ in $\Rtwo$ (possibly infinite, with no accumulation). For every $p\in S$, the associated Voronoi cell $V(p)$ is 
$$ V(p) = \big\{x\in\Rtwo\btq\forall q\in S\setminus\{p\},\;|x-p| < |x-q|\big\}.$$
For a given $p\in S$, the cell $V(p)$ is built as follows: for every $q\in S\setminus \{p\}$, consider $d_{p,q}$ the mediator line of the segment $[p,q]$ which divides $\Rtwo$ into to half planes, denoted by $H_{pq}^+$ for the one containing the point $p$. With this notation, $V(p)$ then becomes
$$V(p) = \bigcap_{q\in S\setminus \{p\}} H_{pq}^+,$$
and consequently, it is a convex set with polygonal boundary (possibly unbounded). The decomposition of the plane following this rule is the Voronoi diagram of the set $S$, further denoted by $\mathcal V(S)$ (Figure~\ref{fig:Voronoi}, left).
\begin{figure}[h]
\centering
\includegraphics[scale = 0.38]{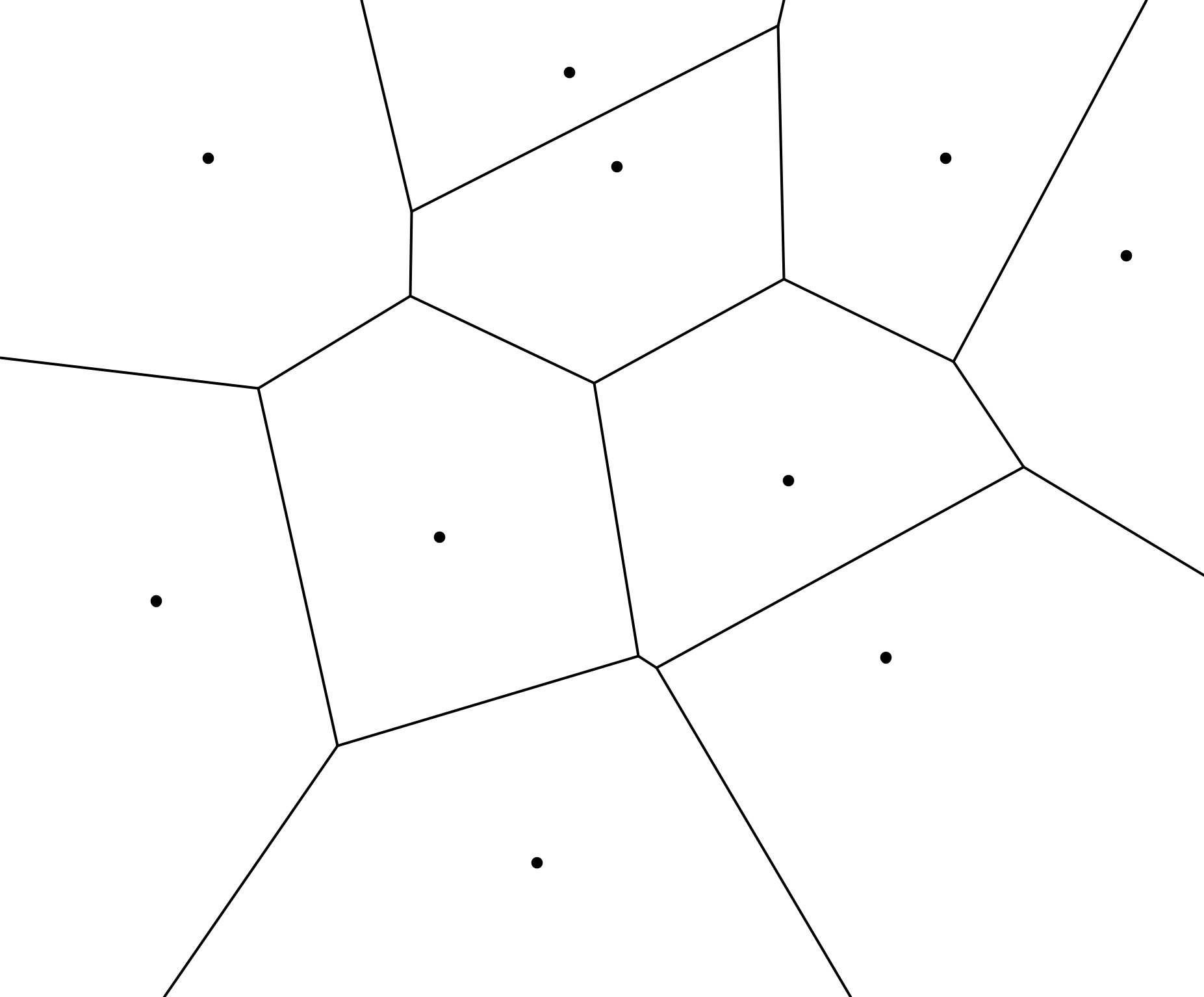}
\includegraphics[scale = 0.4]{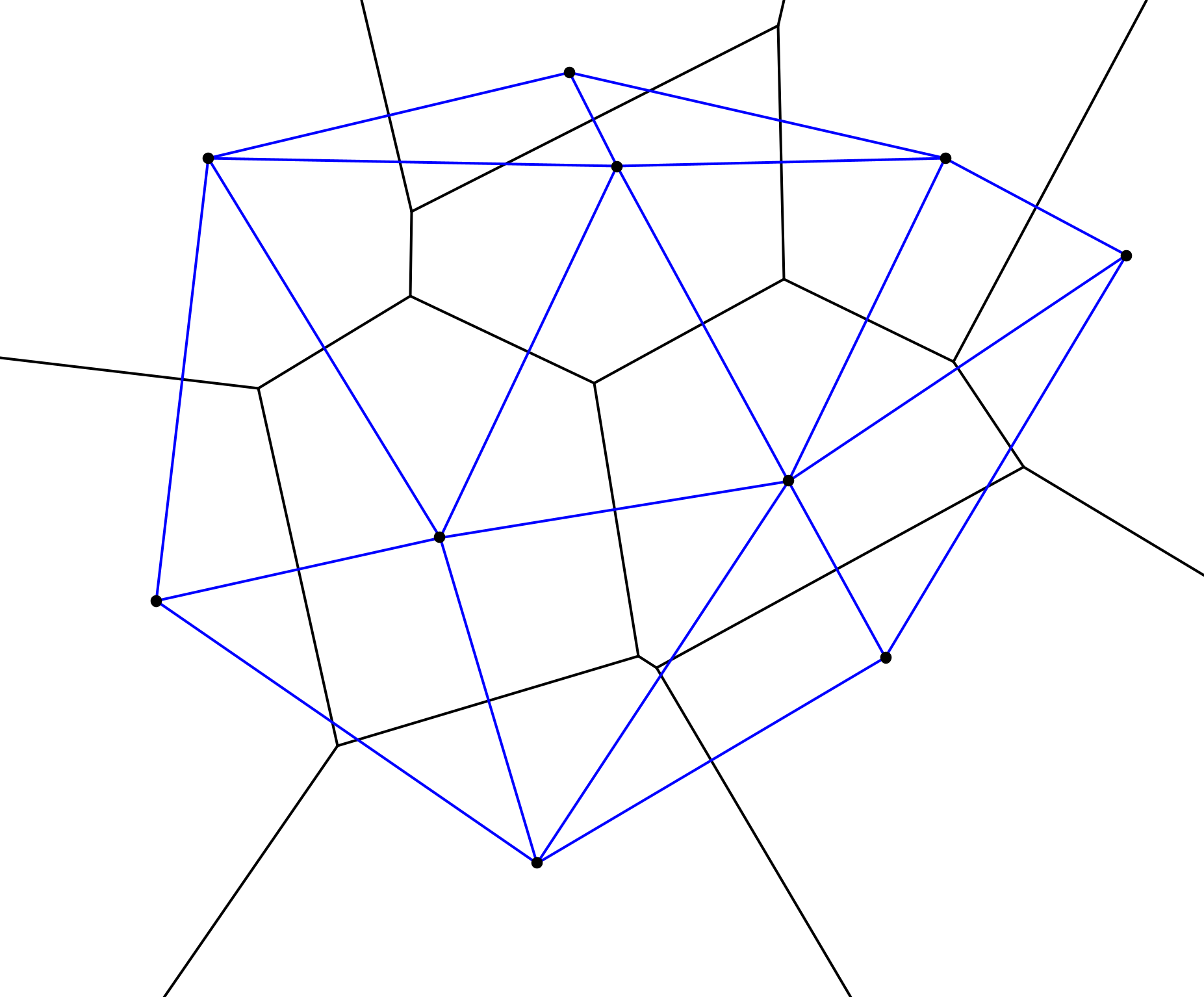}
\caption{Voronoi Diagram and its Delaunay triangulation \label{fig:Voronoi}}
\end{figure}

Associated to this Voronoi diagram one defines the Delaunay tessellation $\mathcal D(S)$ of the set $S$ as a graph: for every two points $p,q\in S$, the segment $[p,q]$ is an edge in the graph $\mathcal D(S)$ if and only if the two associated Voronoi cells $V(p)$ and $V(q)$ are adjacent. In general, the cells defined by the Delaunay tessellation of $S$ are not triangular, but up to suitably adding some edges to the graph $\mathcal D(S)$ that do not cross the existing ones, we obtain what is known as a Delaunay triangulation of $S$ that we denote in the following as $\TT(S)$ (Figure~\ref{fig:Voronoi}, right). 
 
The main property of $\TT(S)$ on which our proof heavily relies is the so-called {\it empty circle property} (see \cite{D34}), stating that 
 if $\T$ is a triangle in $\TT(S)$ and $\mathcal C_\T$ is its circumscribed circle, then inside the circle there is no other point of $S$.

We are now ready to prove Theorem~\ref{Th:reduction}.

\begin{proof}[Proof of Theorem \ref{Th:reduction}]

\smallskip
Let us prove now the first inequality of Theorem \ref{Th:reduction}.
Let $\eps \in (0, \frac 12)$ and consider $(1+\eps) \Zhex$ the set of centers of the hexagonal tessellation build on equilateral triangles of circumradius $1+\eps$. We denote by $\TT^\text{hex}$ its Delaunay triangulation which consists precisely on equilateral triangles of edge of length $1+\eps$, and define the set $\O^* = \Rtwo \setminus \big((1+\eps)\Zhex\oplus \overline{B_\eps}\big)$, we have $\O\in\ov\A_\eps$. We construct the sequence of sets $(\O_n)_{n\ge 1}$ defined by $\O_n = \O^* \cap B_n$.
Then, $|\O_n| < +\infty$ and if $n \ge 2$ we have $\rho(\O_n) = 1$   and by definition, it satisfies 
$$ \limsup \frac{T(\O_n)}{|\O_n|} \le \E(\O^*) \le \sup\{\E(\O)~|~ \O\in \ov \A_\eps, \rho(\O) = 1\}.$$
  Let $M_n$ be the number of triangles of $\T\in \TT^\text{hex}$ such that $\T \subset B_n$ and $d(\T, \partial B_n) \ge 1$. Denote by $\TT_n^\text{hex}$ the family of such triangles. When $n\to +\infty$, $M_n$ behaves like $O(n^2)$. Consequently, $|\O_n| = M_n ((1+\eps)^2|\Teq| - \frac{\pi \eps^2}{2}) + O(n)$. 

To estimate from below the torsion of $\O_n$, we build a test function $u_n\in H^1_0(\O^*\cap B_n)$ such that for all triangles $\T\in\TT_n^\text{hex}$ we have
$$\int_\T 2\,u_n\,dx-\int_\T|\nabla u_n|^2\,dx = \Tcal((1+\eps)\Teq, \eps)$$
and $u_n$ has a controlled gradient, uniformly bounded, at distance one from the boundary of the ball $ B_n $.

This construction is possible because the mixed torsion function associated to $(1+\eps)\Teq \sm [V_{(1+\eps)\Teq}\oplus \ov {B_ \eps}]$ enjoys the symmetry of the equilateral triangle. In particular, on two adjacent triangles the associated torsion functions have the same traces on the joint boundary so that they can be glued together in order to make a global $H^1_{\text{loc}}$ function. We consider $w_{\Om^*}$ the function defined in this way on $\O^*$ and define
$$u_n:= \min\{w_{\Om^*}, \|w_{\Om^*}\|_\infty d(\cdot, \partial B_n)\}.$$
This function differs from $w_{\Om^*}$ only in a tubular neighborhood of distance $1$ to $\partial B_n$ being unchanged on triangles from $\TT_n^\text{hex}$. In the $1$-tubular neighborhood of the boundary $\partial B_n$, $u_n$ has controlled $L^\infty$-norms of both the function itself and its gradient, by constants independent on $n$. 

We then obtain
$$T(\O_n) \ge M_n \,\Tcal((1+\eps)\Teq, \eps) + O(n)$$ 
so that
$$\frac{T(\O_n)}{|\O_n|}\ge\frac{\Tcal(( {1+\eps})\Teq, \eps)}{( {1+\eps})^2|\Teq| - \frac \pi 2\eps^2} + O(1/n),$$
and
$$\lim_{n\to\infty} \frac{T(\O_n)}{|\O_n|} \ge \frac{\Tcal(( {1+\eps})\Teq, \eps)}{( {1+\eps})^2|\Teq| - \frac \pi 2\eps^2},$$
which concludes the proof of the first inequality.

\bigskip
We prove now the second inequality.

Fix $\eps \in(0,\frac 12)$ and let us consider $\O \in \ov \A_\eps$, $\O = \Rtwo \setminus \big(Z \oplus \overline{B_\eps}\big)$, satisfying $\rho(\O) = 1$.   and define $\O_R = \O \cap B(x,R)$ for some $R>0$ and $x\in \Rd$.
Denote by $\overline\TT(Z) \subset\TT(Z)$ the subset of triangles $\T$ of $\TT(Z)$ satisfying $|\T\cap \O_R| > 0 $, and we introduce the set $\O_R^{\text{ext}} = \O_R \cup \bigcup_{\T\in\ov\TT(Z)}\T\cap \O$. Note that since the points of $Z$ must stay at distance at least $\sqrt{3r}$, the volumes $|\O_R|$ and $|\O_R^{\text{ext}}|$ satisfy uniformly for $x\in \Rd$,
$$ \lim_{R\to +\infty}\frac{|\O_R^{\text{ext}}|}{|\O_R|} = 1. $$ 

Since $Z$ has no accumulation points and $\O$ is bounded, the set $\overline \TT (Z)$ is finite. Denoting by $w^\text{ext}$ the torsion function of $\O_R^\text{ext}$, we have
$$
\begin{aligned}
\frac{T(\O_R)}{|\O_R|} & \le \frac{|\O_R^{\text{ext}}|}{|\O_R|} \frac{T(\O^\text{ext}_R)}{|\O^\text{ext}_R|}\\
&= \frac{|\O_R^{\text{ext}}|}{|\O_R|} \frac{\sum_{\T\in \overline \TT(Z)} \left[\int_\T 2w^\text{ext} - \int_\T |\nabla w^\text{ext}|^2\right]}{\sum_{\T\in\overline \TT(Z)} |\O\cap\T|}\\
&\le \frac{|\O_R^{\text{ext}}|}{|\O_R|} \sup_{\T\in\overline \TT(Z)} \frac{\int_\T 2 w^\text{ext} - \int_\T |\nabla w^\text{ext}|^2}{|\O\cap\T|}\\
&\le \frac{|\O_R^{\text{ext}}|}{|\O_R|} \sup_{\T\in\overline \TT(Z)} \frac{ \Tcal(\T, \eps)}{|\O\cap\T|}.
\end{aligned}
$$
Let us denote $\T^*$ the triangle reaching the supremum above. Since $\TT(Z)$ is a Delaunay triangulation and $\rho(\O) =1$, by the empty circle property, then $\T^* \in \TT(Z)$ has circumradius $\Rcal(\T^*)\le 1+\eps$. Additionally, since $\O\in \ov\A_\eps$ and $\eps < \frac12$, it holds that $|\O\cap\T^*| = |\T^*\sm [V_{\T^*}\oplus \ov{B_\eps}]|$, so that

$$ \frac{ \Tcal(\T^*, \eps)}{|\O\cap\T^*|} = \frac{ \Tcal(\T^*, \eps)}{|\T^*\sm [V_{\T^*}\oplus \ov{B_\eps}]|}. $$

Note that this would not be true in general if we were to work in the class $\A_\eps$. Indeed, if no condition is imposed on the distance between points, the disk positioned on the third vertex of a triangle adjacent to $\T^*$ could overlap with $\T^*$ resulting in a loss of volume. 

Finally, taking the the sup over $x$ and the limsup in $R$ concludes the proof of the second inequality.
 
\end{proof}

\section{General strategy and main steps of the proof of Theorem~\ref{TH2}}\label{sec:proofTH2}

In this section we detail the proof strategy of Theorem~\ref{TH2}. It relies on three distinct steps, stated in Propositions \ref{prop:Convergence}, \ref{prop:maximalityF0} and \ref{prop:MaxFr}. These propositions are proved in Sections \ref{sec:proofPropConv}, \ref{sec:proofPropMaxF0} and \ref{sec:proofPropMaxFr}, respectively.

\begin{proof}[Proof of Theorem \ref{TH2}]
We are looking to solve, for $r$ small, problem \eqref{pb:triangle}, which we recall for the convenience of the reader:
\be \label{d70}
\sup\left\{\frac{\Tcal(\T,r)}{|\T\sm [V_\T \oplus B_r]|} \tq \T \mbox{ triangle},\Rcal(\T) \le 1, |\T \sm [V_{ \T} \oplus B_r]| >0\right\}
\ee
 
\begin{lem}\label{d71}
There exists $r^*>0$ such that, for every $r\in(0,r^*)$, Problem~\eqref{d70} admits a solution $\T^*$. Moreover, every optimal triangle $\T^*$ is either acute or isosceles right, is non degenerate, and has circumradius $\Rcal(\T^*) = 1$. 
\end{lem}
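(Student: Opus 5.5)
I will argue by compactness and then derive each qualitative property of $\T^*$ by perturbing a putative optimizer. Set $F_r(\T)=\Tcal(\T,r)/|\T\sm[V_\T\oplus B_r]|$.

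\textbf{Existence.} The class of triangles with $\Rcal(\T)\le1$ is compact up to translations and rotations, since vertices lie on or inside a disk of radius $1$. I will check continuity of $\T\mapsto\Tcal(\T,r)$ on nondegenerate triangles by transporting test functions with affine maps between nearby triangles; the Jacobians and gradient distortions are close to the identity, which gives upper and lower semicontinuity. The denominator is continuous. Along a maximizing sequence, the only danger is that the triangles degenerate, either to a segment or to a set where $|\T\sm[V_\T\oplus B_r]|\to0$.

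\textbf{Lower bound and exclusion of degenerate limits.} To rule these out, I first fix a benchmark. For the equilateral triangle, the torsion function of a domain at distance $\gtrsim1$ from the holes gives $F_r(\Teq)\ge c\log(1/r)$. More precisely, capacity asymptotics give $F_r(\Teq)\sim \frac{|\Teq|}{2\pi}\cdot\frac{2\pi}{\pi}\log(1/r)$; the exact constant is irrelevant. Against this benchmark I will bound $F_r$ from above on triangles near degeneration. For this I use the pointwise bound $w_r(x)\lesssim d(x)^2\log(1/r)$-type estimates, or more simply the Poincaré-type inequality $\int u^2\le C\,\mathrm{diam}^2\log(1/r)\int|\nabla u|^2$. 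This inequality holds on thin or small domains whose points are all close to a Dirichlet disk. It should show that $F_r(\T)\to0$, or at least $F_r(\T)\ll F_r(\Teq)$, when the inradius of $\T$ or of $\T\sm[V_\T\oplus B_r]$ is small relative to $1$. This also justifies the convention $F_r=0$ in the degenerate case. Hence a maximizing sequence stays in a compact set of nondegenerate triangles, and a maximizer $\T^*$ exists.

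\textbf{Circumradius equal to $1$.} I will use scaling: $\Tcal(t\T,r)=t^4\Tcal(\T,r/t)$. I then compare $F_r(t\T)$ with $F_r(\T)$ for $t>1$. Dilating while keeping $r$ fixed is the same as shrinking the holes relative to the triangle, which increases $\Tcal$ by monotonicity of the Dirichlet set. Since $\Tcal$ scales like $t^4$ while the area scales like $t^2$, the ratio roughly grows like $t^2$. The issue is the lost area near the vertices, which scales only like $r^2$. I will make this precise by showing $\frac{d}{dt}\log F_r(t\T)>0$. Writing $\Tcal(t\T,r)\ge t^4\Tcal(\T,r)$ and $|t\T\sm\cdot|= t^2|\T|-\frac{\pi r^2}{2}$ (using nondegeneracy) gives $F_r(t\T)>F_r(\T)$ for $t>1$. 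Therefore $\Rcal(\T^*)=1$.

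\textbf{Acute or isosceles right.} The main obstacle is showing that an obtuse or non-isosceles right triangle is not optimal. Following the geometric strategy of \cite{BB23}, for an obtuse $\T$ with $\Rcal=1$ the circumcenter lies outside $\T$. I will move the obtuse vertex away from the longest side, perpendicular to it, until the triangle becomes right, producing $\T'\supset\T$ with $\Rcal(\T')\le1$. Then I will compare the two triangles. The area gain $|\T'\sm\T|$ is of order $1$ times the displacement. The torsion gain follows by extending $w_r$ to $\T'$ by reflection or stretching, and it should be at least $2\int_{\T'\sm\T}$ of the extension minus the extra gradient cost. What must be checked is that the average value of $w_r$ near the obtuse vertex region is at least $F_r$ to leading order in $\log(1/r)$; this is plausible because $w_r\approx \frac{|\T|}{\pi}\log(1/r)$ uniformly away from the holes. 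Right but non-isosceles triangles are handled similarly, by sliding the right-angle vertex along the circumcircle toward the isosceles position.

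I expect the obtuse case to need some care, possibly by an affine transport of the obtuse triangle onto a right one with the same base, with error terms of order $O(1)$ against a leading term of order $\log(1/r)$. That gap is exactly why the statement requires $r<r^*$.
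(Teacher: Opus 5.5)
There are two genuine gaps.

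\textbf{Non-degeneracy.} You never prove non-degeneracy in the sense the lemma uses: the vertex discs must be disjoint, i.e.\ every edge is at least $2r$ for acute or right triangles. Yet you rely on it to write $|t\T\sm[V_{t\T}\oplus B_r]|=t^2|\T|-\frac{\pi r^2}{2}$. Compactness and continuity do not supply it. For fixed $r$, a limit triangle can have an edge shorter than $2r$ while still having $|\T\sm[V_\T\oplus B_r]|>0$.

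Your proposed tool is too weak to exclude such triangles. The inequality $\int u^2\le C\,\mathrm{diam}^2\log(1/r)\int|\nabla u|^2$ only yields $F_r(\T)\le C\,\mathrm{diam}(\T)^2\log(1/r)$. Take a needle triangle with $|BC|<2r$ and $A$ at distance of order $1$. Its diameter is of order $1$, so the bound is of the same order as your benchmark $F_r(\Teq)\approx\frac{|\Teq|}{\pi}\log(1/r)$, and it rules nothing out.

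The missing observation is that when $|BC|<2r$, the two closed discs cover the whole side $BC$. Restrict to an acute or right triangle, which is why the paper treats obtuse triangles first. Then each segment obtained by slicing $\T\sm[V_\T\oplus B_r]$ perpendicularly to $BC$ has length $l\le 2$ and an endpoint on the Dirichlet set. Drop the tangential derivative and solve the 1D Dirichlet--Neumann torsion on each slice. This gives $\Tcal(\T,r)\le\int\frac{l^3}{3}\le\frac43|\T\sm[V_\T\oplus B_r]|$. The bound is independent of $r$, with no logarithm, so $F_r(\Teq)\to\infty$ beats it.

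Separately, your dilation inequality $t^4(|\T|-\frac{\pi r^2}{2})>t^2|\T|-\frac{\pi r^2}{2}$ is equivalent to $|\T|>\frac{\pi r^2}{2}(1+\Rcal(\T)^2)$, which is not automatic. Your derivative version even fails once $|t\T|\le\pi r^2$. The inequality does hold for small $r$, because non-degenerate triangles have area at least $\sqrt3 r^2$ and small ones have circumradius $O(r)$, but this must be argued. The paper instead excludes $|\T|\le\pi r^2$ directly, via a radial bound $F_r\le 49\ln(7)\,r^2$.

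\textbf{Obtuse and non-isosceles right triangles.} Your treatment here is heuristic, and as designed it cannot produce a fixed $r^*$. Moving a vertex also moves a Dirichlet disc, so no monotonicity is available. The leading-order gain $\frac{|\T'|-|\T|}{\pi}\log(1/r)$ tends to $0$ as the obtuse angle tends to $\pi/2$, or as the right triangle approaches the isosceles one. Nothing in your plan controls the difference of the $O(1)$ remainders at that scale.

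The paper avoids asymptotics entirely. Take $\T(A,B,C)$ obtuse at $A$ and reflect it across $BC$ to obtain a kite. The reflected vertex $A'$ lies inside the circumcircle, so $\T(A,B,A')$ and $\T(A',C,A)$ have circumradius at most $1$. Use the reflected torsion function as a test function on each half of the kite, split along $AA'$, and apply the mediant inequality. This shows that one of these two isosceles triangles is at least as good; iterating gives an acute or right triangle.

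Your route could be rescued only by first proving a uniform expansion $F_r(\T)\le\frac{|\T|}{\pi}\log(1/r)+C$ over all triangles with $\Rcal(\T)\le 1$, and then deducing that maximizers converge to $\Teq$. That is a much stronger statement than anything you establish.
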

\begin{proof}
Let us first argue that if there is a solution to Problem \eqref{d70}, it must be an acute or a right isosceles triangle.
\begin{figure}[h]
\centering
\includegraphics[scale = 0.2]{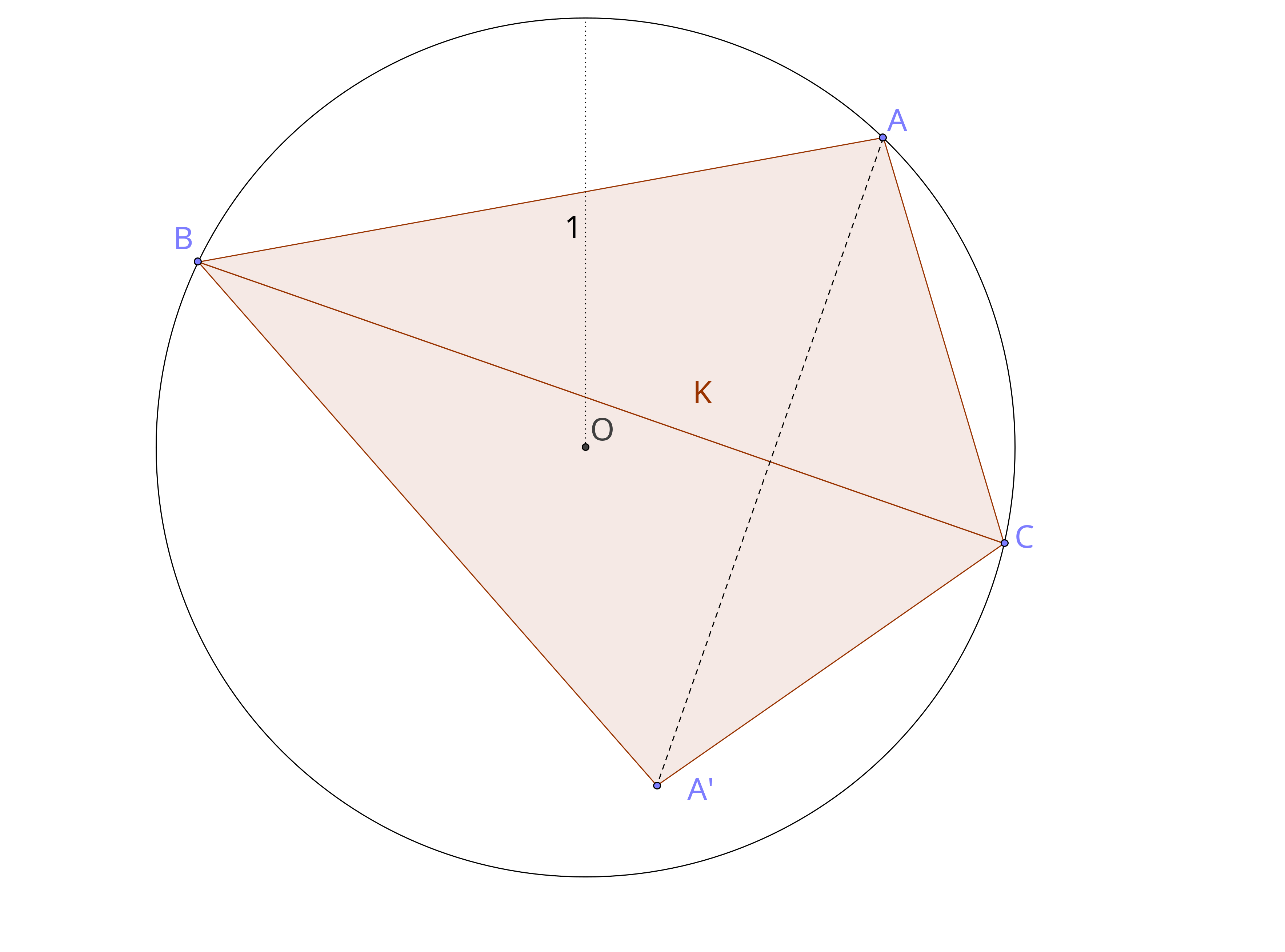}
\includegraphics[scale = 0.2]{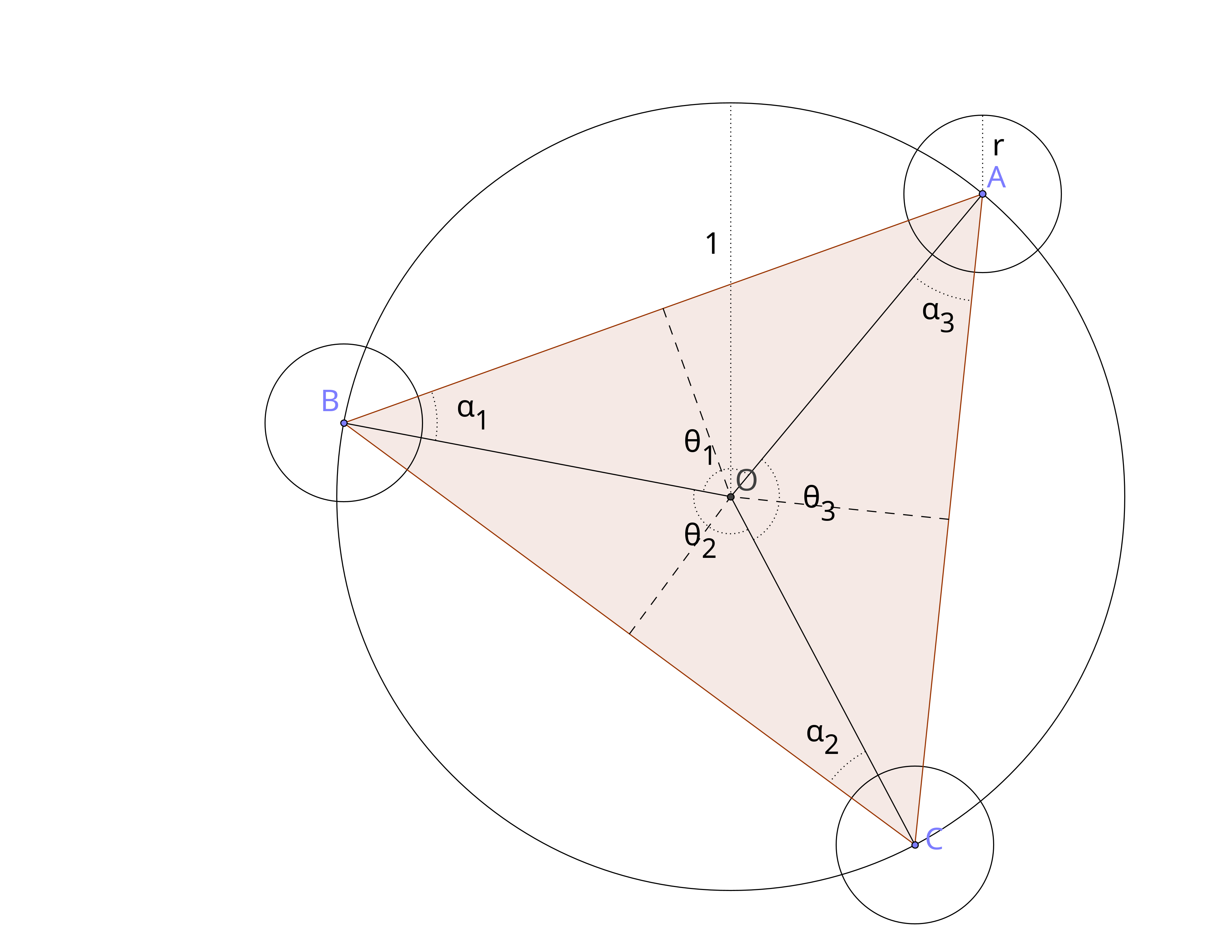}
\caption{Symmetrization procedure and division into subtriangles \label{fig:symmetrization}}
\end{figure}
Let $\T(A,B,C)$ be a triangle of vertices $ABC$ which admits an obtuse angle in $A$ of circumradius $\Rcal(\T(A,B,C))\le 1$ and such that $|\T(A,B,C)\sm [V_{\T(A,B,C)}\oplus \ov{B_r}]|>0$. We perform the following construction (Figure~\ref{fig:symmetrization}, left). Consider the kite $K$ of vertices $A,B,A',C$ obtained by reflection of the triangle with respect to the side $BC$. Since the angle in $A$ is obtuse, the point $A'$ is included in the circumscribed circle of $\T(A,B,C)$ which implies that the triangles $\T(A,B,A')$ and $\T(A',C,A)$ both have circumradius not larger than $1$. Relying on symmetry, we obtain
$$
\frac{\Tcal(\T(A,B,C), r)}{|\T(A,B,C)\sm [V_{\T(A,B,C)}\oplus B_r]|}$$
$$=\frac{\Tcal(\T(A,B,C), r)+\Tcal(\T(A',B,C), r)}{|\T(A,B,C)\sm [V_{\T(A,B,C)}\oplus \ov{B_r}]| + |\T(A',B,C)\sm [V_{\T(A',B,C)}\oplus \ov{B_r}]|}$$
$$\le \max\left\{\frac{\Tcal(\T (A,B,A'),r)}{|\T(A,B,A')| \sm [V_{\T(A,B,A')}\oplus \ov{B_r}]|},\ \frac{\Tcal(\T (A',C,A),r)}{|\T(A',C,A)| \sm [V_{\T(A',C,A)}\oplus \ov{B_r}]|}\right\}.
$$
Therefore the best of the two triangles, $\T(A,B,A')$ and $\T(A',C,A)$ is not a worst competitor than $\T(A,B,C)$. Moreover, the two triangles $\T(A,B,A')$ and $\T(A',C,A)$ are isosceles and up to doing this procedure twice they are both acute or right.

Note that the last inequality above is strict as soon as the reflected torsion function associated to $\T(A,B,C)$ and restricted to the triangles $\T (A',C,A)$ and $\T (A',B,A)$ is not exactly the torsion function of the respective triangles.

We now show that Problem~\eqref{d70} has a solution among acute or right triangles that are not degenerate.

It is easy to notice that
\begin{equation}\label{d72}
 \lim_{r \to 0^+} \frac{\Tcal(\Teq,r)}{|\Teq\sm (V_{\Teq} \oplus B_r)|} =+\infty.
 \end{equation}
The precise asymptotic including this assertion is provided in Proposition \ref{prop:Convergence}.

Now fix $r >0$. A maximizing sequence for \eqref{d70} exists, call it $(\T_n)_n$, $\T_n =A_nB_nC_n$, such that $|\T_n\sm (V_{\T_n} \oplus B_r)| >0$. Since for every $n$, $\T_n\subset\ov {B(0,1)}$, we can assume that $A_n\to \bar A$, $B_n\to \bar B$ and $C_n\to \bar C$. All three points can not coincide, otherwise we contradict $|\T_n\sm (V_{\T_n} \oplus B_r)| >0$. If all pairwise distance between the points is larger than $2r$, then $\T=\bar A\bar B\bar C$ is the desired solution, since both torsion and area are continuous for this convergence. If not, this means that starting from some index, two points, say $B_n$ and $C_n$ have a distance smaller than $2r$. 

Consider $\T$ an acute or right triangle, of vertices $A,B,C$ such that the side $[B,C]$ has length smaller than $2r$. In this case, the two closed discs of radius $r$ centered in $B,C$ overlap. Assume the segment $[A,B]$ is aligned on the vertical axis $Oy$ of a system of coordinates. 
Then
$$-\frac 12\Tcal(\T,r)=\min \{\frac 12 \int _{\T\sm (V_\T \oplus B_r)}|\nabla u|^2 -
 \int _{\T\sm (V_\T \oplus B_r)}u : u \in H^1(\T\sm (V_\T \oplus B_r)) , u=0 \mbox{ on }V_\T \oplus \ov{B_r}\}$$
 $$\ge \min \{\frac 12 \int _{\T\sm (V_\T \oplus B_r)}u_y^2 -
 \int _{\T\sm (V_\T \oplus B_r)}u : u \in H^1(\T\sm (V_\T \oplus B_r)) , u=0 \mbox{ on }V_\T \oplus \ov{B_r}\}$$
 $$\ge \min \{\frac 12 \int _{\T\sm (V_\T \oplus B_r)}u_y^2 -
 \int _{\T\sm (V_\T \oplus B_r)}u : u \in H^1(\T\sm (V_\T \oplus B_r)) , u=0 \mbox{ on }B_r(B) \cup B_r(C)\}$$
$$= -\frac 12\int_I \frac{l(x)^3}{3} dx,$$
where $I$ is the projection of the segment $B_nC_n$ on the horizontal axis and $l(x)$ is the Hausdorff measure of 
$\T\sm (V_\T \oplus B_r)$ in the vertical line passing through $(x,0)$. This relies on the computation of the one dimensional torsion energy of an interval of length $l$ with Dirichlet condition at one end and Neumann on the other. 
Since $l(x) \le 2$ we get
$$-\frac 12\int_I \frac{l(x)^3}{3} dx \ge - \frac 43\int_I l(x)= - \frac 43 |\T\sm (V_\T \oplus B_r)|,$$
implying that
$$\frac{\Tcal(\T,r)}{|\T\sm (V_{\T} \oplus B_r)|} \le \frac 83.$$ .

The sequence $(\T_n)_n$ being maximizing, for $n$ large enough, 
$$\frac{\Tcal(\Teq,r)}{|\T\sm (V_{\Teq} \oplus B_r)|} \le \frac{\Tcal(\T_n,r)}{|\T_n\sm (V_{\T_n} \oplus B_r)|},$$
and therefore the previous computation imply, 
$$\frac{\Tcal(\Teq,r)}{|\T\sm (V_{\Teq} \oplus B_r)|} \le \frac 83.$$
Clearly, for small $r$ this is not optimal, being in contradiction to \eqref{d72}

We have then proved that there exists a solution to Problem~\eqref{d70} for $r<r^*$ small enough which is a non degenerate acute or right triangle, we call it $\Topt$, let us prove now that it has circumradius $\Rcal(\Topt) = 1$.

Assume $\Rcal(\Topt) <1$ and fix $t>1$ such that $\Rcal(t\Topt) = 1$.

We know $\Tcal(t\Topt, tr) = t^4 \Tcal(\Topt, r)$, so that since $\Topt$ is non degenerate,

$$ \frac{\Tcal(\Topt,r)}{|\Topt|-\frac{\pi r^2}2} = \frac{1}{t^4} \frac{\Tcal(t\Topt,tr)}{|\Topt|-\frac{\pi r^2}{2}} \le \frac{1}{t^4} \frac{\Tcal(t\Topt,r)}{|\Topt|-\frac{\pi r^2}{2}}. $$
The last inequality is a consequence of the monotonicity of the mixed torsional rigidity for the inclusion of the Dirichlet region.

If it holds,
$$ \frac{1}{t^4} \frac{1}{|\Topt|-\frac{\pi r^2}{2}} < \frac{1}{|t\Topt|-\frac{\pi r^2}{2}}, $$
then $t\Topt$ is a strictly better competitor than $\Topt$ and we have a contradiction.

Otherwise, assume 
$$ \frac{1}{t^4} \frac{1}{|\Topt|-\frac{\pi r^2}{2}} \ge \frac{1}{|t\Topt|-\frac{\pi r^2}{2}}, $$
which rewrites 
$$ |\Topt| \le (1 + t^{-2}) \frac{\pi r^2}2, $$
which implies, since $t>1$, $|\Topt| \le \pi r^2$.

Since $|\Topt|$ is non degenerate and right or acute, it has circumradius $\Rcal(\Topt) > r$ and the height associated to each edge of the triangle must be greater than $r$. This implies that if any edge of $\Topt$ has length greater than $7r$, $|\Topt| \ge 3.5 r^2 > \pi r^2$. Consequently, all the edges of $\Topt$ have length smaller than $7r$.

We estimate $\Tcal(\Topt,r)$ by a 1D torsional rigidity of radial type originating from the vertex $A$,

$$ \Tcal(\Topt, r) = \max \Big\{\int_{\Topt} \big[2u-|\nabla u|^2\big] \Btq u\in H^1(\Topt),\ u = 0, \mbox{ on } \Topt\cap[V_{\Topt}\oplus \ov{B_r}]\Big\} $$
$$ \le \max \Big\{\int_0^\alpha \int_r^{\rho(\theta)} \big[2\rho\, u - \rho\,u_\rho^2 \big]\d\rho\d\theta \Btq u\in H^1(\Topt),\ u=0 \mbox{ on } \ov{B_r(A)} \Big\}$$
where $\alpha$ denotes the angle in $A$. Solving the 1D, problem on the segment $[r,\rho(\theta)]$
$$ S_{1D} = \max_{f, f(r) = 0} \int_{r}^{\rho(\theta)} 2s\,f(s)- s\,(f'(s))^2 \d s,$$ 
we find that the maximum is obtained for the function
$$ f(s) = \frac{\rho(\theta)^2}{2}\ln(\frac{s}{r}) + \frac{r^2-s^2}{4} \le \frac{(7r)^2}2 \ln(7), $$
and the energy satisfies 
$$ S_{1D} \le (7r)^2 \ln(7) \int_r^{\rho(\theta)} s\d s .$$ 
This implies
$$ \Tcal(\Topt,r) \le (7r)^2 \ln(7) (|\Topt|-\frac{\pi r^2}2), $$
so that, 
$$ \frac{\Tcal(\Topt,r)}{|\Topt| - \frac{\pi r^2}2} \le 49 \ln(7)\, r^2, $$
which cannot be optimal for $r < r^*$ small since it contradicts \eqref{d72}. 
\end{proof}
 
Consequently, from now on, we consider $r<r^*$ with $r^*$ small enough such that the solutions of Problem~\eqref{d70}, are non degenerate acute or right triangles of circumradius $1$.
 
Let $\T$ be an acute triangle.We make the following manipulation: we divide the triangle $\T$ into three isosceles triangles of angles $\theta_1,\theta_2,\theta_3$ at the center of the circle. Each one is divided into two congruent right triangles (Figure~\ref{fig:symmetrization}, right) of angles $(\alpha_i , \frac{\theta_i}2, \frac\pi2)$, where $\alpha_i=\frac\pi2- \frac{\theta_i}2$. Formally, if $\T$ is an isosceles right triangle, then $\alpha_1 = 0, \alpha_2 = \alpha_3 =\pi/4$. 

For $0<\alpha<\pi/2$, let us denote by $\T^\alpha$ the right triangle of vertices $O = (0,0)$, $A=(\cos(\alpha),0)$, $B=(\cos(\alpha),\sin(\alpha))$ (Figure~\ref{fig:righttriangleNEW}) and angles $ \hat O =\alpha, \hat A =\frac\pi2, \hat B= \frac \theta2$. We denote by $w_{r,\alpha}$ the mixed torsion function associated to this triangle:
$$
\left\{\begin{array}{ll}
    -\Delta w_{r,\alpha} = 1 &\text{ in } \T^\alpha\setminus \ov B_r,\\
    w_{r,\alpha} = 0 & \text{ in }B_r,\\
    \partial_n w_{r,\alpha} = 0 &\text{ on } \partial \T^\alpha.
\end{array}\right.
$$
Its mixed torsional rigidity equals
$$ J_r(\alpha) = \int_{\T^\alpha} w_{r,\alpha}\,dx, $$
By continuous extension, we define $J_r(0) = J_r(\frac \pi2)=0 $. 

\begin{figure}
\centering
\includegraphics[scale = 0.7]{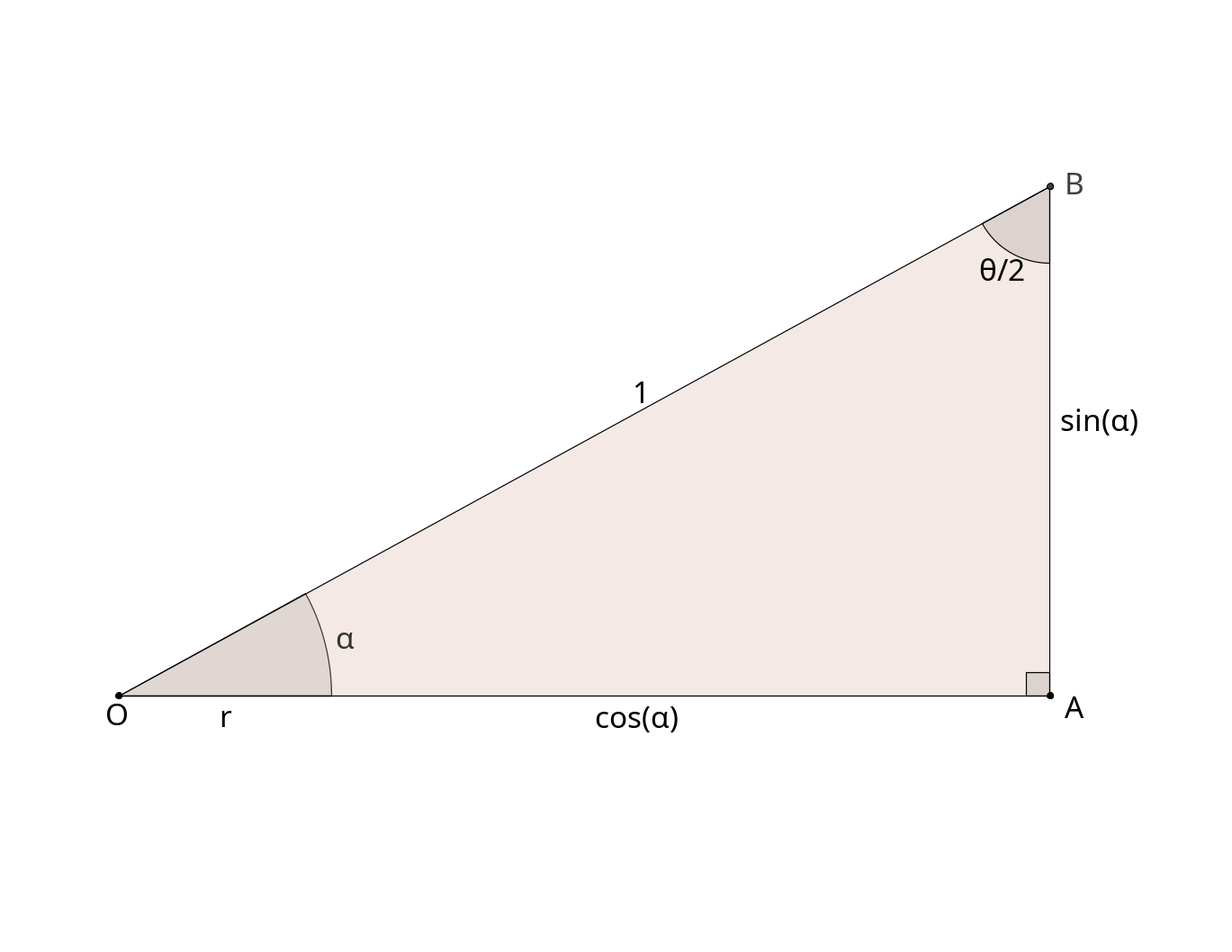}
\caption{The triangle $\T^\alpha$ \label{fig:righttriangleNEW}}
\end{figure}

We have,
$$\frac{\Tcal(\T,r)}{|\T\sm [V_\T\oplus \ov{B_r}]|}\le\frac{J_r(\alpha_1) + J_r(\alpha_2) + J_r(\alpha_3)}{|\T^{\alpha_1}\setminus B_r | + |\T^{\alpha_2}\setminus B_r | + |\T^{\alpha_3}\setminus B_r |}.$$
Equality occurs when the triangle $\T$ is equilateral, otherwise the inequality is strict.

We introduce the functional 

\be\label{eq:defFr}
F_r(\alpha_1, \alpha_2) = \frac{J_r(\alpha_1) + J_r(\alpha_2) + J_r(\pi/2-(\alpha_1+\alpha_2))}{|\T^{\alpha_1}\setminus B_r | + |\T^{\alpha_2}\setminus B_r | + |\T^{\pi/2- (\alpha_1 + \alpha_2)}\setminus B_r |},
\ee
that we define for $(\alpha_1,\alpha_2)\in\S$ the set of admissible couples,
\be \label{eq:defS}
\S = \{(\alpha_1,\alpha_2) \in [0,\pi/4]^2~|~ \alpha_1+\alpha_2 > 0\}.
\ee

For a fixed $r>0$ we introduce the set of non degenerate couples $\S_{ND,r}=\{(\alpha_1,\alpha_2) \in [0,\pi/4]^2~|~ \alpha_1+\alpha_2 \ge \eta_r\}$ with $\sin(\eta_r) =r$. These couples correspond to the non degenerate triangles $\T$. Note that any given couple $(\alpha_1,\alpha_2)\in S$ becomes non degenerate when $r$ gets small.

In the case where $(\alpha_1, \alpha_2)$ is non degenerate, $F_r$ can be written, 
$$ F_r(\alpha_1, \alpha_2) = 4 \frac{J_r(\alpha_1)+J_r(\alpha_2)+J_r(\pi/2-(\alpha_1+\alpha_2))}{\sin(2\alpha_1)+\sin(2\alpha_2)+\sin(2(\alpha_1+\alpha_2)) - \pi r^2}.$$

Our goal is to show that for $r<r^*$ sufficiently small, $F_r$ is maximized by, and only by, the couple $(\pi/6, \pi/6)$ which corresponds to the equilateral triangle. We split the proof in a series of intermediate results, stated in Propositions \ref{prop:Convergence}, \ref{prop:maximalityF0}, and \ref{prop:MaxFr} which, roughly speaking, bring the analysis of the maximization of $F_r$ for small $r$ to the analysis of a limit problem obtained as the leading asymptotic term when $r \to 0$. This problem is of purely geometrical type and can be solved explicitly. We introduce the function 
\be\label{eq:defJ0NEW}
J_0: [0,\pi/2] \to \R, \;\; J_0(\alpha):=\frac{\sin^2(2\alpha)}{16\alpha}, \;\; J_0(0)=J_0(\frac \pi2)=0.
\ee
 
\begin{prop} \label{prop:Convergence}
The following assertions hold :
\begin{enumerate}
\item There exists a positive constant $\overline C$ such that for any $r <r^*$ sufficiently small, 
$$ \forall \alpha\in(0,\pi/2),\qquad \left|\frac{1}{|\ln(r)|}\frac{J_r(\alpha)}{|\T^\alpha|} - \frac{J_0(\alpha)}{|\T^\alpha|}\right|\le\frac{\overline C}{|\ln(r)|}. $$

\item For any $0<\alpha_m<\alpha_M<\pi/2$, the following limit hold uniformly on $[\alpha_m,\alpha_M]$, 
$$\lim_{r\to0} \left|\frac{J_r''(\alpha)}{|\ln(r)|}- J_0''(\alpha)\right|=0$$
\end{enumerate}
\end{prop}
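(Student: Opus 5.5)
The plan is to make rigorous the heuristic that, away from the Dirichlet disc, the mixed torsion function $w_{r,\alpha}$ behaves like a logarithmic singularity of the correct flux. Since $-\Delta w_{r,\alpha}=1$ in $\T^\alpha\sm\ov B_r$ with Neumann conditions on $\partial\T^\alpha$, the divergence theorem gives the flux through the arc $\partial B_r\cap\T^\alpha$:
$$\int_{\partial B_r\cap \T^\alpha}\partial_\rho w_{r,\alpha}=|\T^\alpha\sm B_r|.$$
Here $|\T^\alpha|=\tfrac14\sin(2\alpha)$. Near $O$, the sides $OA$ and $OB$ are rays, so a radial profile automatically satisfies the Neumann condition there. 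One therefore expects
$$w_{r,\alpha}\approx \frac{|\T^\alpha|}{\alpha}\ln\frac{\rho}{r},$$
and integrating over the triangle gives
$$J_r(\alpha)\approx |\ln r|\,\frac{|\T^\alpha|^2}{\alpha}=|\ln r|\,\frac{\sin^2(2\alpha)}{16\alpha}=|\ln r|\,J_0(\alpha).$$
Assertion (1) is the quantitative form $|J_r(\alpha)-|\ln r|J_0(\alpha)|\le \overline C\,|\T^\alpha|$, uniformly in $\alpha$. I will prove it by matching lower and upper bounds obtained from the primal and dual variational formulations.

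\emph{Lower bound.} I take the test function $u=\min\{c\ln(\rho/r)_+,\,M\}$, or better $u=c\ln(\rho/r)_+ + \phi$, in \eqref{eq:defT*} restricted to $\T^\alpha$, with $c=|\T^\alpha|/\alpha$. Here $\phi$ is a bounded corrector vanishing on $B_r$; it carries the $O(1)$ part of the profile and is only needed if I want the sharp constant rather than the $O(|\T^\alpha|)$ error. Computing in polar coordinates with $\rho\le\rho(\theta)\le 1$, the energy $2\int u-\int|\nabla u|^2$ splits into two parts. The first is $2c\int\ln(\rho/r)-c^2\alpha\ln(\rho_{\min}/r)+\dots$. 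The second consists of terms bounded by $C|\T^\alpha|$, because $\int_{\T^\alpha}|\ln\rho|$ and $c^2\int_{\T^\alpha}\rho^{-2}$ (truncated at the far side $AB$) are both $O(|\T^\alpha|)$ after dividing out the $|\ln r|$ factor. This yields $J_r\ge |\ln r|J_0-C|\T^\alpha|$.

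\emph{Upper bound.} I use the dual (Thomson) principle:
$$\Tcal=\min\Big\{\int|\sigma|^2 : \div\sigma=-1 \text{ in }\T^\alpha\sm\ov B_r,\ \sigma\cdot n=0 \text{ on the Neumann part}\Big\}.$$
My first candidate is the field
$$\sigma=-\frac{x}{2}+\frac{|\T^\alpha|}{\alpha}\,\frac{x}{|x|^2},$$
which has divergence $-1$ and zero flux through the rays $OA$ and $OB$, but nonzero normal component on $AB$. That boundary data has zero total mass, so I add a divergence-free correction $\tau$ with $\tau\cdot n$ equal to minus that component on $AB$. Its energy is $O(|\T^\alpha|)$ by a Neumann-problem estimate, and the cross terms with the logarithmic part are likewise $O(|\T^\alpha|)$. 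This gives $J_r\le|\ln r|J_0+C|\T^\alpha|$.

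The main obstacle is keeping $\overline C$ uniform as $\alpha\to0$ or $\alpha\to\pi/2$. In those limits the triangle becomes thin, and the Neumann corrector $\tau$ and the constant in any Poincaré or trace inequality might blow up. To handle this, I will construct $\tau$ explicitly, as a shear-type field concentrated along the long thin direction, rather than through an abstract Neumann estimate. I will also rely on the degenerate-case estimate $\Tcal\le\tfrac83|\cdot|$ from the proof of Lemma~\ref{d71}, which shows directly that for very thin configurations both $J_r$ and $|\ln r|J_0$ are $O(|\T^\alpha|)$ in the relevant range.

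For (2), I map $\T^\alpha$ onto a fixed reference triangle by the linear map $(x,y)\mapsto(x/\cos\alpha,\,y/\sin\alpha)$ and write $J_r(\alpha)$ as the optimum of a functional with $\alpha$-dependent coefficients on a fixed domain. The Dirichlet region is then a small ellipse, and it is mapped back to a disc near the origin by a further localized diffeomorphism. Differentiating twice in $\alpha$ (Hadamard or domain-variation formulas) expresses $J_r''$ through integrals of $w_{r,\alpha}$, of its first derivative in $\alpha$, and of their gradients. Dividing by $|\ln r|$, the decomposition above shows that $w_{r,\alpha}/|\ln r|$ converges to $|\T^\alpha|/\alpha$ in $C^2$ away from $O$, locally uniformly for $\alpha\in[\alpha_m,\alpha_M]$, by elliptic regularity for the harmonic remainder. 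The contribution of a neighbourhood of $O$ is controlled by the explicit radial profile and tends to $0$ after division. Passing to the limit term by term then gives the second derivative of $J_0(\alpha)=|\T^\alpha|^2/\alpha$.
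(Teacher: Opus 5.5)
\textbf{Part (1).} This part of your plan works, and it is a genuinely different route from the paper's. Your field $\sigma$ is exactly $\nabla v_{r,\alpha}$ for the paper's comparison function $v_{r,\alpha}=\frac{|\T^\alpha|}{\alpha}\ln\frac{|x|}{r}+\frac{r^2-|x|^2}{4}$, and $\sigma\cdot n$ on $[A,B]$ is the paper's $g^\alpha$.

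The paper controls $\psi=w_{r,\alpha}-v_{r,\alpha}$ through a Poincar\'e inequality that is uniform in $\alpha$ (Cioranescu--Murat test function, Steklov and Robin eigenvalues of sectors, a Bossel--Daners argument). It also estimates trace constants by reflections. You instead test the primal problem with $v_{r,\alpha}$ and the dual problem with $\sigma+\tau$, which gives
\[
0\le J_r(\alpha)-\Big(H_r(\alpha)-\int_{[A,B]}g^\alpha v_{r,\alpha}\Big)=\|\nabla\psi\|_{L^2}^2\le\int|\tau|^2
\]
for every admissible $\tau$. So an explicit construction suffices. Take $\Psi(x,y)=-\lambda(x,y)\int_0^y g^\alpha$, with $\lambda$ affine on each horizontal section, $\lambda=0$ on $[O,B]$ and $\lambda=1$ on $[A,B]$. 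Then $\tau=(\partial_y\Psi,-\partial_x\Psi)$ satisfies $|\tau|\le C$ uniformly in $\alpha$, and $|\tau|=O(\alpha^2)$ as $\alpha\to0$.

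The slicing bound you mention only helps when $\cos\alpha\lesssim r$. There it gives $J_r\le\frac13|\T^\alpha|$, while $|\ln r|J_0\le Cr|\ln r|\,|\T^\alpha|$. As $\alpha\to0$, however, $|\ln r|J_0\approx\frac12|\ln r|\,|\T^\alpha|$, so the explicit $\tau$ must do the work there.

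One caveat: the cross term $2\int\sigma\cdot\tau$ is not $O(|\T^\alpha|)$ by Cauchy--Schwarz, which loses a factor $\sqrt{|\ln r|}$. You must integrate by parts, $\int\sigma\cdot\tau=-\int_{[A,B]}g^\alpha v_{r,\alpha}$ (using $v_{r,\alpha}=0$ on the arc). Then use $\int_{[A,B]}g^\alpha=0$ to remove the $\ln(1/r)$ part of $v_{r,\alpha}$.

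\textbf{Part (2).} Here there is a genuine gap. You claim that, after division by $|\ln r|$, the contribution of a neighbourhood of $O$ tends to $0$, so only the far-field limit $w_{r,\alpha}/|\ln r|\to c:=|\T^\alpha|/\alpha$ matters. For a formula built from $w$, $\dot w$ and their gradients (e.g.\ the envelope formula $J_r'=2\int\hat w\,j_\alpha'-\int A_\alpha'\nabla\hat w\cdot\nabla\hat w$ on your reference domain) this is false.

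Any family of maps between $\T^\alpha\setminus B_r$ and $\T^\beta\setminus B_r$ must change the opening angle at $O$. So the velocity field has $DV\neq0$ on the whole annulus $B_\delta\setminus B_r$. Meanwhile $\int_{B_\delta\setminus B_r}|\nabla w_{r,\alpha}|^2\simeq c^2\alpha\ln(\delta/r)$, so every fixed neighbourhood of $O$ carries asymptotically all of the Dirichlet energy.

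Concretely, your linear map has velocity $V=(-x\tan\alpha,\,y\cot\alpha)$. With the radial profile $c\ln(\rho/r)$, the gradient part of $S(w):DV=(w-\frac12|\nabla w|^2)\,\mathrm{div}V+\nabla w\cdot DV\nabla w$ integrates over the sector $B_\delta\setminus B_r$ to exactly $-\frac12c^2\ln(\delta/r)$. This is precisely the term $-c^2$ in $J_0'=2c\,|\T^\alpha|'-c^2$: the factor $1/\alpha$ in $J_0=|\T^\alpha|^2/\alpha$ is generated entirely near $O$. Dropping the near-$O$ terms would give $J_r'/|\ln r|\to 2c\,|\T^\alpha|'$, which is the wrong limit.

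At second order the same annulus contributes terms such as $\int_{B_\delta\setminus B_r}A_\alpha'\nabla w\cdot\nabla\dot w$. These are again of size $|\ln r|$ and require knowing the material derivative $\dot w_{r,\alpha}$ near $O$. Elliptic regularity away from $O$ says nothing about this, and your proposal contains no estimate on $\dot w$ at all.

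This is exactly what the paper supplies. It compares $\dot w$ with the explicit $\dot v$ and proves, uniformly on $[\alpha_m,\alpha_M]$:
\[
\|\nabla(w-v)\|_{L^2}=O(1),\qquad \|\nabla\dot w\|_{L^2}+\|\nabla\dot v\|_{L^2}=O(\sqrt{|\ln r|}),\qquad \|\nabla(\dot w-\dot v)\|_{L^2}=o(\sqrt{|\ln r|}).
\]
This is Lemma~\ref{lem:SecondOrderBounds}, which uses the Poincar\'e constant $D|\ln r|$ and a compactness argument. With these bounds, all the $|\ln r|$-size terms near $O$ can be evaluated on $v_{r,\alpha}$, whose $H_r''=|\ln r|J_0''+O(1)$ is explicit.

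In terms of your own identity $J_r=H_r-\int_{[A,B]}g^\alpha v_{r,\alpha}+\|\nabla\psi_{r,\alpha}\|_{L^2}^2$, the first two terms are explicit. What you would still have to prove is $\partial_\alpha^2\|\nabla\psi_{r,\alpha}\|_{L^2}^2=o(|\ln r|)$ uniformly on $[\alpha_m,\alpha_M]$. That requires a material-derivative estimate for $\psi_{r,\alpha}$, uniform in $r$, which the proposal does not provide.
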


Proposition~\ref{prop:Convergence} implies the following limit for the functional $F_r$, for any $(\alpha_1,\alpha_2)\in \S$,
\be\label{eq:limitFr}
\lim_{r\to 0} \frac{F_r(\alpha_1, \alpha_2)}{|\ln(r)|} = \frac{\frac{\sin^2(2\alpha_1)}{\alpha_1}+\frac{\sin^2(2\alpha_2)}{\alpha_2}+\frac{\sin^2(2(\alpha_1+\alpha_2))}{\pi/2-(\alpha_1+\alpha_2)} }{4(\sin(2\alpha_1)+\sin(2\alpha_2)+\sin(2(\alpha_1+\alpha_2)))} := F_0(\alpha_1, \alpha_2).
\ee

We study the functional $F_0(\alpha_1,\alpha_2)$. 

\begin{prop} \label{prop:maximalityF0}
For any couple of real numbers $(\alpha_1,\alpha_2) \in \S$, we obtain
$$
F_0(\alpha_1, \alpha_2) \le \frac{3\sqrt 3}{4\pi} :=E_0
$$
with equality if and only if $\alpha_1 = \alpha_2 = \pi/6$.
\end{prop}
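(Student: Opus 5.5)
Write $\alpha_3:=\pi/2-(\alpha_1+\alpha_2)$, so that $\alpha_1+\alpha_2+\alpha_3=\pi/2$ with all $\alpha_i\in[0,\pi/2]$. The third term of $F_0$ then reads $\sin^2(2\alpha_3)/\alpha_3$ and its denominator term is $\sin(2\alpha_3)$. Hence $F_0$ is a symmetric function of $(\alpha_1,\alpha_2,\alpha_3)$:
$$F_0=\frac{\sum_i \sin^2(2\alpha_i)/\alpha_i}{4\sum_i\sin(2\alpha_i)},$$
where $\sin^2(2\alpha)/\alpha$ is extended by $0$ at $\alpha=0$ and $\alpha=\pi/2$, consistently with \eqref{eq:defJ0NEW}. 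At $\alpha_i=\pi/6$ one checks $F_0=\frac{27/(2\pi)}{6\sqrt3}=\frac{3\sqrt3}{4\pi}=E_0$. Since the denominator is positive on $\S$, the claim is equivalent to
$$\sum_{i=1}^3 h(\alpha_i)\le 0,\qquad h(\alpha):=\frac{\sin^2(2\alpha)}{\alpha}-\frac{3\sqrt3}{\pi}\sin(2\alpha),$$
under the constraint $\sum\alpha_i=\pi/2$, with equality only at $(\pi/6,\pi/6,\pi/6)$. Note that $h(\pi/6)=0$.

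The first tool is the tangent-line trick. Set $\lambda:=h'(\pi/6)=\frac{3\sqrt3}{\pi}-\frac{27}{\pi^2}<0$ and $\varphi(\alpha):=h(\alpha)-\lambda(\alpha-\pi/6)$. If $\varphi\le0$ at all three $\alpha_i$, then summing and using $\sum(\alpha_i-\pi/6)=0$ gives the inequality. Equality forces $\varphi(\alpha_i)=0$, which holds only at $\pi/6$. The plan is to show:

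1. $\varphi\le0$ on an interval $[0,a]$ for some explicit $a>\pi/6$.
2. $\varphi$ vanishes on $[0,a]$ only at $\pi/6$, where it has a double zero.

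This amounts to studying one explicit trigonometric function of one variable. The main tools are $\varphi(0)=\lambda\pi/6<0$, $\varphi(\pi/6)=\varphi'(\pi/6)=0$, and a sign analysis of $\varphi'$ (for instance by showing $\varphi''<0$ near $\pi/6$ and bounding $\varphi'$ on the remaining subintervals with elementary estimates on $\sin$ and $\cos$). This settles the case where all $\alpha_i\le a$.

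The tangent-line inequality cannot hold globally, because $\varphi(\pi/2)=-\lambda\pi/3>0$. So a second case is needed: one angle, say $\alpha_3$ by symmetry, exceeds $a$. Then $\alpha_1+\alpha_2<\pi/2-a$, and both small angles are small. Here I would bound the numerator directly:

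1. For the small angles, use $\sin^2(2\alpha)/\alpha\le \frac{\sin2\alpha_{\max}}{\alpha_{\max}}\sin(2\alpha)$, with $\alpha_{\max}:=\pi/2-a$. This uses that $\sin(2\alpha)/\alpha$ is decreasing.
2. For the large angle, use the factor $\sin(2\alpha_3)/\alpha_3\le \sin(2a)/a$.

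This yields
$$4F_0\le \max\Big\{\frac{\sin(2\alpha_{\max})}{\alpha_{\max}},\frac{\sin 2a}{a}\Big\}.$$
The largest value is then compared with $4E_0=3\sqrt3/\pi\approx1.654$. The first quantity is close to $2$ when $\alpha_{\max}$ is small, so this crude split is too weak. The refinement I would try first is a mixed bound that uses the concavity of $\sin(2\cdot)$ to keep the actual weights. The two small angles contribute at most about $2\cdot 2\sin(\alpha_1+\alpha_2)$ to the numerator, while the denominator is at least $\sin(2\alpha_3)+\sin(2(\alpha_1+\alpha_2))$. Since $\sin(2\alpha_3)=\sin(2(\alpha_1+\alpha_2))$, the ratio reduces to a single-variable function of $s=\alpha_1+\alpha_2\in[0,\pi/2-a]$, which should be compared with $E_0$.

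The main obstacle is this second case, where the tangent line breaks down. One must choose the threshold $a$ so that two things hold at once: $\varphi\le0$ on $[0,a]$, and the reduced one-variable bound in the complementary region stays strictly below $E_0$. If the two windows do not overlap cleanly, I would subdivide the complementary region into finitely many pieces. On each piece I would use monotonicity bounds of the above type, whose verification is elementary but possibly numerical. Strictness in this case shows that no equality occurs outside $(\pi/6,\pi/6)$.
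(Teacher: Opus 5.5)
Your plan is correct, but it follows a different route from the paper.

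\textbf{How the paper argues.} It starts from the same reduction to $\sum_i h(\alpha_i)\le 0$; its $f$ is your $h$. It then takes a maximizing sequence and treats the boundary regimes separately (angles tending to $0$ or to $\pi/4$). At an interior maximum it uses the Lagrange condition $h'(\bar\alpha_1)=h'(\bar\alpha_2)=h'(\bar\alpha_3)$. Since $h''$ has a single zero $x_0\approx 0.765$ in $(0,\pi/2)$, Rolle's theorem forces two angles to coincide. This leaves the one-variable inequality $2h(\alpha)+h(\pi/2-2\alpha)\le 0$, which the paper asserts as elementary.

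\textbf{What your route buys.} The tangent-line argument needs no existence or first-order analysis, and it gives the equality case immediately. The price is the separate large-angle regime. The paper's inflection fact is also exactly what closes your first step, since $\varphi''=h''$. On $[0,x_0]$, $\varphi$ is concave, so the double zero at $\pi/6$ gives $\varphi\le 0$ with equality only at $\pi/6$. On $[x_0,\pi/2]$, $\varphi$ is convex, so $\varphi\le\max\{\varphi(x_0),\varphi(a)\}<0$ on $[x_0,a]$ as soon as $\varphi(a)<0$.

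\textbf{The two windows overlap.} No subdivision is needed. Take $a=\pi/3$; then $\varphi(\pi/3)=\frac{9}{4\pi}-\frac{\sqrt3}{2}<0$. In the complementary case set $s=\alpha_1+\alpha_2<\pi/6$. The small angles contribute at most $2(\sin 2\alpha_1+\sin 2\alpha_2)\le 4\sin s$ to the numerator. The denominator is at least $2\sin 2s$, by subadditivity of $\sin(2\,\cdot)$. Together these give $4F_0\le \frac{1}{\cos s}+\frac{\sin 2s}{\pi-2s}\le \frac{2}{\sqrt3}+\frac{3\sqrt3}{4\pi}$. This is strictly less than $\frac{3\sqrt3}{\pi}$ if and only if $8\pi<27$, which holds. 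So the whole argument closes with explicit constants.

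\textbf{One slip.} Your first crude bound is backwards. Since $\sin(2\alpha)/\alpha$ is decreasing, for $\alpha\le\alpha_{\max}$ you have $\frac{\sin 2\alpha}{\alpha}\ge\frac{\sin 2\alpha_{\max}}{\alpha_{\max}}$, not $\le$. The correct uniform factor is $2$, which is what your refinement actually uses. You discarded that crude split anyway, so nothing downstream is affected.
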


Concerning $F_r$, we denote by $E_r$ its supremum on $\S$ and we show the following result.

\begin{prop}\label{prop:MaxFr}
For $0<r<r^*$, the functional $F_r$ admits maximizers that are non degenerate and in addition, it holds: 
$$\lim_{r\to 0}\frac{E_r}{|\ln(r)|}=E_0.$$
and, for any sequence of maximizers, $(\alpha_{1,r},\alpha_{2,r})_r$,
$$\lim_{r\to0} \alpha_{i,r} = \frac{\pi}{6}\qquad\text{for }i=1,2.$$
\end{prop}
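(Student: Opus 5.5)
We prove Proposition~\ref{prop:MaxFr} in three steps: existence and non degeneracy of maximizers, convergence of the normalized maximal value, and convergence of the maximizers.

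\textbf{Existence and non degeneracy.} For fixed small $r$ we take a maximizing sequence $(\alpha_1^n,\alpha_2^n)$ in $\S$. Each such couple corresponds, up to the reflection and subdivision argument preceding \eqref{eq:defFr}, to a triangle of circumradius $1$. Lemma~\ref{d71} then lets us restrict to non degenerate couples, namely those in $\S_{ND,r}$. The degenerate regime $\alpha_1+\alpha_2<\eta_r$ is handled by the same one-dimensional bound used in Lemma~\ref{d71}: there $F_r\le 8/3$, whereas $F_r(\pi/6,\pi/6)\sim E_0|\ln r|\to\infty$. On the compact set $\S_{ND,r}$ we need continuity of $F_r$. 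The denominator is explicit and bounded below there, and $J_r$ is continuous in $\alpha$: the triangles $\T^\alpha$ vary smoothly, and a bi-Lipschitz map of $\T^\alpha$ onto $\T^{\alpha'}$ that is radial near $O$, and hence preserves $B_r$, transports test functions with controlled energy. The supremum is therefore attained at a non degenerate couple.

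\textbf{Limit of $E_r/|\ln r|$.} For the lower bound, the pointwise limit \eqref{eq:limitFr} at $(\pi/6,\pi/6)$ gives $\liminf E_r/|\ln r|\ge F_0(\pi/6,\pi/6)=E_0$. For the upper bound we use Proposition~\ref{prop:Convergence}(1) in the form $J_r(\alpha)\le |\ln r|\,J_0(\alpha)+\overline C\,|\T^\alpha|$. Summing over the three angles and dividing by the denominator, which for non degenerate couples is $\frac14(\sin 2\alpha_1+\sin2\alpha_2+\sin2(\alpha_1+\alpha_2))-\frac{\pi r^2}{4}$, we get
$$\frac{F_r}{|\ln r|}\le \frac{D}{D-\pi r^2}\,F_0(\alpha_1,\alpha_2)+\frac{C}{|\ln r|}\,\frac{D}{D-\pi r^2},\qquad D=\sin 2\alpha_1+\sin2\alpha_2+\sin2(\alpha_1+\alpha_2).$$
By Lemma~\ref{d71}, at a maximizer the associated triangle is acute or right isosceles with circumradius $1$. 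Its area is then bounded below, since it is at least $\pi r^2$ and indeed much larger, because a tiny-area triangle of circumradius $1$ with non degenerate $r$-disks is excluded by the $8/3$ bound. Hence $D\ge c>0$ at maximizers and the ratio $D/(D-\pi r^2)\to1$. Proposition~\ref{prop:maximalityF0} then yields $\limsup E_r/|\ln r|\le E_0$.

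\textbf{Convergence of maximizers.} Let $(\alpha_{1,r},\alpha_{2,r})$ be maximizers and extract a subsequence converging to some $(\bar\alpha_1,\bar\alpha_2)\in[0,\pi/4]^2$. The lower bound on $D$ excludes $(0,0)$, so the limit lies in $\S$. The main obstacle is passing to the limit in $F_r/|\ln r|$ along moving angles, possibly with some $\alpha_i\to0$ or $\pi/2-(\alpha_1+\alpha_2)\to0$. This is exactly why Proposition~\ref{prop:Convergence}(1) is uniform in $\alpha$ after normalization by $|\T^\alpha|$: it gives $J_r(\alpha)/|\ln r| = J_0(\alpha)+O(|\T^\alpha|/|\ln r|)$ uniformly, with $|\T^\alpha|\le 1$. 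The numerator of $F_r/|\ln r|$ therefore converges uniformly to that of $F_0$, the denominator converges uniformly away from zero, and $F_0$ is continuous on $\S$ (with $J_0$ extended continuously by $0$). Consequently
$$E_0=\lim_{r\to0}\frac{F_r(\alpha_{1,r},\alpha_{2,r})}{|\ln r|}=F_0(\bar\alpha_1,\bar\alpha_2).$$
The equality case of Proposition~\ref{prop:maximalityF0} forces $\bar\alpha_1=\bar\alpha_2=\pi/6$. Since every subsequence has the same limit, the whole family converges.
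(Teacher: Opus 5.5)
\textbf{There is a genuine gap in the non degeneracy step.} This step matters twice: it gives attainment of $E_r$ on $\S$, and your upper bound is derived only on $\S_{ND,r}$.

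\emph{Why Lemma~\ref{d71} does not apply.} Lemma~\ref{d71} and its $8/3$ bound concern the true triangle functional in \eqref{d70}. The functional $F_r$ dominates that functional, with equality only at the equilateral triangle. So neither statement transfers: an upper bound on the smaller quantity says nothing about $F_r$, and maximizers of $F_r$ need not be maximizers of \eqref{d70}.

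\emph{Why $F_r\le 8/3$ is false on degenerate couples.} The $8/3$ estimate works because in the real triangle the two overlapping discs cover a full cross-section, so every vertical slice meets the Dirichlet set. In $F_r$ the triangle is cut into right triangles with Neumann conditions on the cuts. For $\alpha_1+\alpha_2<\eta_r$, the pieces $\T^{\alpha_1},\T^{\alpha_2}$ are slivers of length about $1$ whose only Dirichlet set is $B_r(O)$, reached through a neck of width about $r\alpha_i$. Using Proposition~\ref{prop:Convergence}(1) from below (or testing with $v_{r,\alpha}$) gives $J_r(\alpha_i)\ge \alpha_i\big(\frac14|\ln r|-C\big)$. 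The denominator of $F_r$ is at most about $\alpha_1+\alpha_2$. Hence $F_r\ge \frac14|\ln r|-C\to\infty$ on degenerate couples. The only thing separating them from $(\pi/6,\pi/6)$ is the coefficient of $|\ln r|$: $\frac14$ against $E_0=\frac{3\sqrt3}{4\pi}\approx 0.41$.

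\textbf{The repair is the paper's argument.} Since $|\T^\alpha\sm B_r|\ge(1-r)^2|\T^\alpha|$ for every $\alpha$, Proposition~\ref{prop:Convergence}(1) yields on all of $\S$
$$\frac{F_r(\alpha_1,\alpha_2)}{|\ln r|}\le\frac{1}{(1-r)^2}\Big(F_0(\alpha_1,\alpha_2)+\frac{\overline C}{|\ln r|}\Big).$$
On degenerate couples $\alpha_1+\alpha_2<\eta_r\to 0$, so by \eqref{asx01} the right side is at most $\frac14+o(1)$. This is strictly below $F_r(\pi/6,\pi/6)/|\ln r|\to E_0$.

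\textbf{The same defect reaches your Steps 2 and 3.} There you justify ``$D\ge c>0$ at maximizers'' by the invalid $8/3$ reasoning, and then use it to exclude the limit $(0,0)$.
\begin{itemize}
\item That lower bound is not needed for $D/(D-\pi r^2)\to 1$. On $\S_{ND,r}$ one has $D\ge \min\{2\sin 2\eta_r,1\}$, so $\pi r^2/D=O(r)$ uniformly.
\item The limit $(0,0)$ should instead be excluded via \eqref{asx01}. If maximizers accumulated at $(0,0)$, your own upper bound would force $\limsup E_r/|\ln r|\le \frac14<E_0$, contradicting the lower bound.
\end{itemize}

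With these replacements, the rest of your argument coincides with the paper's: the lower bound at $(\pi/6,\pi/6)$, compactness and continuity on $\S_{ND,r}$, the uniform passage to the limit via Proposition~\ref{prop:Convergence}(1), and uniqueness from Proposition~\ref{prop:maximalityF0}.
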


\smallskip
\noindent{\bf Proof of Theorem~\ref{TH2}.}
Now, let us assume that one can find a sequence $(r_n)$ going to $0$ such that for all $n$ there exists a non degenerate couple $\bar\alpha_{i,r_n}$, $i=1,2$, that are not all equal to $\frac{\pi}{6}$ and maximize $F_{r_n}$. We denote by $\bar\alpha_{3,r_n}$ the quantity $\bar\alpha_{3,r_n} = \pi/2 -(\bar\alpha_{1,r_n}+\bar\alpha_{2,r_n})$. From Proposition~\ref{prop:MaxFr}, we know that the three sequence $\bar \alpha_{i,r_n} \to \pi/6$ when $n\to +\infty$, and, without loss of generality, we may assume $\pi/6 -\eps < \bar\alpha_{1,r_n}\le \bar\alpha_{2,r_n} \le \bar\alpha_{3,r_n}< \pi/6 + \eps$ for some $\eps>0$. From the maximality and non degeneracy, we get using Lagrange multipliers that there exists a constant $l_{r_n}$ such that
$${J_{r_n}}'(\bar\alpha_{i,r_n}) - \frac{E_{r_n}}2\ \cos(2\bar\alpha_{i,r_n})=l_{r_n}\qquad\text{for every }i.$$
Then there exists $\xi_n$, such that, $\bar\alpha_{1,r_n} < \xi_n <\bar\alpha_{3,r_n}$ and
\be\label{eq:xin}
 {J_{r_n}}''(\xi_n) + E_{r_n} \ \sin(2\xi_n) = 0. 
\ee
Since $\bar \alpha_{i,r_n} \to \pi/6$ for $i = 1,3$, we deduce that $\xi_n\to\frac{\pi}{6}$. Additionally, the second assertion of Proposition~\ref{prop:Convergence} implies that $J''_{r_n}/|\ln(r_n)|$ converges uniformly on $[\pi/6 - \eps, \pi/6 + \eps]$ to $J_0''$. Therefore, dividing by $|\ln(r_n)|$ we can pass to the limit in \eqref{eq:xin} and obtain
$$J_0''\left(\frac{\pi}{6}\right) + \frac{\sqrt3}2 E_0 = 0.$$
Yet, a direct computation gives 
\be\label{eq:contradictionNEW}
J_0''\left(\frac{\pi}{6}\right) + \frac{\sqrt3}2 E_0 = \frac{3(54 - 12\sqrt 3 \pi - \pi^2)}{8\pi^3} <0.
\ee
We then have a contradiction and we deduce that there exists a $r^*>0$ such that for $r<r^*$, the only maximizer of $F_r$ is the equilateral triangle. This proves Theorem~\ref{TH2}.
\end{proof}

\section{Proof of Proposition~\ref{prop:Convergence}}\label{sec:proofPropConv}

To prove Proposition~\ref{prop:Convergence}, we are going to compare the torsion function $w_{r,\alpha}$ to the explicit radial function $v_{r,\alpha}$, defined as (the restriction to $\T^\alpha\setminus B(O,r)$ of) 
$$ v_{r,\alpha}(x) = \frac{\sin(2\alpha)}{4\alpha} \ln\left(\frac{|x|}r\right) + \frac{r^2-|x|^2}4.$$
It satisfies the following equation, 
\be
\left\{\begin{array}{ll}
-\Delta v_{r,\alpha} =1 & \text{ in } \T^\alpha\setminus B(O,r),\\
v_{r,\alpha} =0 &\text{ on } \partial B(O,r),\\
\partial_n v_{r,\alpha} = 0 & \text{ on } [O,A], [O,B],\\
\partial_n v_{r,\alpha} = g^\alpha & \text{ on } [\bar A,B]
\end{array}\right.
\ee
where $g^\alpha$ has for expression, for $y\in[0,\sin(\alpha)]$,
$$ g^\alpha(y) = \frac{\sin(2\alpha)}{4\alpha} \frac{\cos(\alpha)}{\cos(\alpha)^2+y^2} - \frac{\cos(\alpha)}2.$$
Let us denote by $H_r(\alpha)$ the associated (pseudo) torsional rigidity, 
$$ H_r(\alpha) = \int_{\T^\alpha} v_{r,\alpha}.$$

 Here, by $\bar A$ we denote, when it exists, the point of intersection between the circle $\partial B(O,r)$ and the segment $[A,B]$, if it doesn't exists we still write $\bar A = A$.

\begin{rem}
Intuitively, the function $v_{r,\alpha}$ is a kind of torsion function of the sector of radius $1$ and angle $\alpha$ with Dirichlet boundary conditions on $B(O,r)$ and Neumann boundary conditions on the two radii, rescaled to have the same limit as $w_{r,\alpha}$. It is designed to compensate the $\log$ singularity of $w_{r,\alpha}$ near $B(O,r)$ when $r\to 0$.
\end{rem}
In the following we assume that the functions $w_{r,\alpha}$ and $v_{r,\alpha}$ are defined on the whole triangle $\T^\alpha$, extended by $0$ on $B(O,r)\cap \T^\alpha$.

\subsection{The limits of \(H_r\) and its derivatives}

To begin, we show the following lemma:

\begin{lem}\label{lem:limitH}
The following assertions hold :
\begin{enumerate}
    \item There exists a positive constant $\overline C$ such that for any $r<r^*$ sufficiently small, 
$$
\forall\alpha\in(0,\pi/2),\qquad\left|\frac{H_r(\alpha)}{|\ln(r)|}-J_0(\alpha)\right|\le\frac{\overline C |\T^\alpha|}{|\ln(r)|}.
$$
    \item For any $0<\alpha_m<\alpha_M<\pi/2$, There exists two constants $C(\alpha_m,\alpha_M)$ and $\bar r(\alpha_m,\alpha_M)$ depending on $\alpha_m$ and $\alpha_M$ such that for any $r<\bar r(\alpha_m,\alpha_M)$, 
$$
\forall \alpha\in[\alpha_m,\alpha_M],\qquad \left|\frac{H''_r(\alpha)}{|\ln(r)|} - J''_0(\alpha)\right| \le C(\alpha_m,\alpha_M).
$$
\end{enumerate}
\end{lem}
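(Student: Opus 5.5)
The plan is to compute $H_r(\alpha)$ explicitly in polar coordinates centred at $O$. Since $v_{r,\alpha}$ is radial, the $|\ln r|$-singularity separates off exactly, leaving a remainder that is uniformly controlled by $|\T^\alpha|$.

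\textbf{Setting up the formula.} In polar coordinates $(s,\theta)$ the triangle is $\T^\alpha=\{0<\theta<\alpha,\ 0<s<\rho_\alpha(\theta)\}$ with $\rho_\alpha(\theta)=\cos\alpha/\cos\theta\in[\cos\alpha,1]$. Set $c(\alpha)=\frac{\sin(2\alpha)}{4\alpha}\le\frac12$. Since $v_{r,\alpha}$ vanishes on $B(O,r)$,
$$H_r(\alpha)=\int_0^\alpha\int_{\min(r,\rho_\alpha(\theta))}^{\rho_\alpha(\theta)}\Big[c(\alpha)\ln\tfrac sr+\tfrac{r^2-s^2}4\Big]s\,ds\,d\theta .$$
I would first treat the non degenerate case $r\le\cos\alpha$, where the lower limit is always $r$. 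Using $\int_r^\rho s\ln(s/r)\,ds=\frac{\rho^2}2\ln\frac\rho r-\frac{\rho^2}4+\frac{r^2}4$ together with $\ln\frac\rho r=|\ln r|+\ln\rho$, the leading term is
$$c(\alpha)\,|\ln r|\int_0^\alpha\frac{\rho_\alpha^2}{2}\,d\theta=c(\alpha)\,|\ln r|\,|\T^\alpha|=|\ln r|\,\frac{\sin(2\alpha)}{4\alpha}\cdot\frac{\sin(2\alpha)}{4}=|\ln r|\,J_0(\alpha).$$
This identity is the heart of the lemma: the constant in $v_{r,\alpha}$ was chosen exactly to produce it.

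\textbf{Part (1).} The remaining terms are
$$c(\alpha)\int_0^\alpha\Big(\tfrac{\rho_\alpha^2}{2}\ln\rho_\alpha-\tfrac{\rho_\alpha^2}{4}+\tfrac{r^2}{4}\Big)d\theta\qquad\text{and}\qquad\int_0^\alpha\int_r^{\rho_\alpha}\tfrac{r^2-s^2}{4}\,s\,ds\,d\theta .$$
Since $\rho_\alpha\le1$ and $s^2|\ln s|\le 1/(2e)$, each is bounded by a constant times $c(\alpha)\alpha+|\T^\alpha|$. Moreover $c(\alpha)\alpha=\sin(2\alpha)/4=|\T^\alpha|$, so the total error is at most $\overline C\,|\T^\alpha|$, which gives (1) after dividing by $|\ln r|$.

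In the degenerate range $\cos\alpha<r$ (so $\alpha$ is close to $\pi/2$), the angles with $\rho_\alpha(\theta)<r$ contribute nothing to $H_r$. They do, however, contribute to $|\ln r|J_0$, namely at most $c(\alpha)|\ln r|\,|\T^\alpha\cap B_r|\le c(\alpha)|\ln r|\,|\T^\alpha|$. Here $c(\alpha)\le\frac{\cos\alpha}{2\alpha}\le C\cos\alpha\le Cr$, so this extra error is at most $Cr|\ln r|\,|\T^\alpha|\le C|\T^\alpha|$. The same argument handles the boundary pieces of the other terms.

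\textbf{Part (2).} On $[\alpha_m,\alpha_M]$ I would take $\bar r<\cos\alpha_M$, so that the non degenerate formula holds throughout. Carrying out the integrals gives
$$H_r(\alpha)=|\ln r|\,J_0(\alpha)+c(\alpha)K(\alpha)+r^2P_1(\alpha)+r^4P_2(\alpha),$$
with explicit functions independent of $r$:
$$K(\alpha)=\int_0^\alpha\Big(\tfrac{\rho_\alpha^2}{2}\ln\rho_\alpha-\tfrac{\rho_\alpha^2}{4}\Big)d\theta,\qquad P_1,\ P_2 \text{ built from } \int_0^\alpha\rho_\alpha^{k}\,d\theta \text{ and powers of }\alpha.$$
These are smooth on $[\alpha_m,\alpha_M]$ because the integrands are smooth in $(\alpha,\theta)$ there. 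Differentiating twice under the integral (with Leibniz terms at $\theta=\alpha$) therefore gives
$$\big|H_r''(\alpha)-|\ln r|\,J_0''(\alpha)\big|\le C(\alpha_m,\alpha_M),$$
which in fact yields (2) with the stronger bound $C/|\ln r|$.

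The main obstacle I expect is the uniformity in (1) near the endpoints. Near $\alpha\to0$ the factor $c(\alpha)\to\frac12$ must be balanced against the area; this is handled by the identity $c(\alpha)\alpha=|\T^\alpha|$. Near $\alpha\to\pi/2$ the degenerate geometry $r>\cos\alpha$ appears, and the key estimate is $c(\alpha)\lesssim\cos\alpha<r$, which makes $r|\ln r|$ bounded.
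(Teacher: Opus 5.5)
Your proof is correct and follows the paper's route: explicit polar-coordinate integration of the radial function $v_{r,\alpha}$, isolating $|\ln r|\,J_0(\alpha)$ and bounding the remainder by $C|\T^\alpha|$, then differentiating the explicit, $r$-independent coefficient functions twice on $[\alpha_m,\alpha_M]$ once $r<\cos\alpha_M$. The only difference in part (1) is the degenerate range $\cos\alpha<r$: you handle it with $c(\alpha)\lesssim\cos\alpha<r$, while the paper uses $(1-r)^2|\T^\alpha|\le|\T^\alpha\setminus B(O,r)|$; both work. One bookkeeping slip in part (2): your decomposition leaves out the $r$-independent term $-\frac1{16}\int_0^\alpha\rho_\alpha^4\,d\theta$ coming from the $-s^2/4$ part of $v_{r,\alpha}$, but this term is smooth on $[\alpha_m,\alpha_M]$, so your conclusion $|H_r''-|\ln r|J_0''|\le C(\alpha_m,\alpha_M)$ is unaffected.
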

\begin{proof}
To prove this lemma, since $H_r$ is explicit, we compute directly $H_r$ and its derivatives.

Let $0<r<r^*$ with $r^*<1$, small and $\alpha\in(0,\pi/2)$. Since the expression of $v_{r,\alpha}$ is elementary, a direct computation in polar coordinates yields
$$
\begin{aligned} 
H_r(\alpha) = &|\T^\alpha\setminus B(O,r)| \Big(|\ln(r)| \frac{J_0(\alpha)}{|\T^\alpha|} - \frac{r^2}4 + \frac{\sin(2\alpha)}{8\alpha}\Big)\\
&+ \frac{\sin(2\alpha)}{4\alpha}\int_{\alpha_0}^\alpha \Big(\frac{R(\phi)^2}2 \ln(R(\phi))-\frac{r^2}2\ln(r)\Big)\d\phi -\frac1{16}\int_{\alpha_0}^\alpha \big(R(\phi)^4 - r^4)\d\phi,
\end{aligned}
$$
where $R(\phi) = \frac{cos(\alpha)}{cos(\phi)}$ and
$$ \alpha_0 = \left\{
\begin{array}{ll}
0 &\text{if } \cos(\alpha)>r\\
\arccos(\frac{\cos(\alpha)}{r}) &\text{otherwise}.
\end{array}
\right. 
$$
Noting that the function $x\to x^2/2 \ln(x)$ stays bounded on $(0,1]$, we can easily estimate the two remaining integral terms and obtain for some $C>0$,
$$
\begin{aligned}
&\Big |\int_{\alpha_0}^\alpha \Big(\frac{R(\phi)^2}2 \ln(R(\phi))-\frac{r^2}2\ln(r)\Big)\d\phi\Big | \le C\, (\alpha-\alpha_0) \le C\,\alpha,\\
&\Big |\int_{\alpha_0}^\alpha \big(R(\phi)^4 - r^4)\d\phi\Big | \le 4 |\T^\alpha\setminus B(O,r)|.
\end{aligned}
$$
Now, notice that $(1-r)^2|\T^\alpha|\le|\T^\alpha\setminus B(O,r)| \le |\T^\alpha|$, so for a constant $\ov C$ independent of $\alpha$ and $r$ we get
$$ \Big |H_r(\alpha) - |\ln(r)| J_0(\alpha)\Big| \le \overline C \,|\T^\alpha|. $$
This proves assertion ($1$).

Now, take two constants, $0<\alpha_m<\alpha_M<\pi/2$ and assume that $r<\cos(\alpha_M)$. In this setting, for any $\alpha\in(\alpha_m,\alpha_M)$, we have that $\alpha_0 = 0$ and $H_r(\alpha)$ writes as 
$$\begin{aligned} 
H_r(\alpha) =& |\ln(r)| J_0(\alpha) +\frac{\sin(2\alpha)}{16} \Big(r-\frac16\Big) - \frac{\sin(2\alpha)}{32\alpha}r^2 + \frac{3\alpha}{16}r^4 +\frac{\sin(2\alpha)^2}{32\alpha^2}\\
&+ \frac{\sin(\alpha)\cos(\alpha)^3}{12} - \frac{\sin(2\alpha)^2}{16\alpha}.
\end{aligned}$$
Here we used the two elementary computations, 
$$ \int_0^\alpha \frac{1}{\cos(\phi)^2} \ln(\cos(\phi))\d\phi = \tan(\alpha)\ln(\cos(\alpha)) +\tan(\alpha) -\alpha, $$
$$\int_0^\alpha\frac{1}{\cos(\phi)^4}\d\phi = \frac{\tan(\alpha)}{3} \Big(\frac1{\cos(\alpha)^2} + 2\Big).$$
We can then compute,
$$ \begin{aligned}
H_r''(\alpha) =&|\ln(r)| J_0''(\alpha) - \frac{\sin(2\alpha)}4 \Big(r-\frac16\Big) \\
& +\frac{\sin(2\alpha)}{8\alpha} r^2 + \frac{\cos(2\alpha)}{8\alpha^2} r^2 - \frac{\sin(2\alpha)}{16\alpha^3}r^2\\
& +\frac{\cos(4\alpha)}{4\alpha^2} - \frac{\sin(4\alpha)}{4\alpha^3} + \frac{3\sin(2\alpha)^2}{16\alpha^4}\\
& - \frac{\sin(\alpha)\cos(\alpha)^3}3 - \frac{\sin(4\alpha)}8\\
& - \frac{\cos(4\alpha)}{2\alpha} + \frac{\sin(4\alpha)}{4\alpha^2} - \frac{\sin(2\alpha)^2}{8\alpha^3}.
\end{aligned} $$
Then for a constant depending only on the choice of $\alpha_m,\alpha_M$, denoted $C(\alpha_m,\alpha_M)$, we have for any $\alpha\in(\alpha_m,\alpha_M)$,
$$ |H''_r(\alpha) - |\ln(r)|J_0''(\alpha)| \le C(\alpha_m,\alpha_M).$$
This proves assertion ($2$).
\end{proof}

\subsection{The zero order convergence}
Let us introduce the function $\psi_{r,\alpha} = w_{r,\alpha} -v_{r,\alpha}$. 
As a first step we prove that $\psi_{r,\alpha}$ satisfies the following.

\begin{lem}\label{lem:PoincarePsi}
There exists constants $\overline P, r^*>0$ such that for any $0<r<r^*$ and for any $\alpha\in (0,\pi/2)$ we have
$$\int_{\T^\alpha} \psi_{r,\alpha}^2 \le \overline P \int_{\T^\alpha} |\nabla \psi_{r,\alpha}|^2.$$
\end{lem}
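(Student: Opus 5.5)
The key observation is that $\psi_{r,\alpha}$ vanishes on the Dirichlet region. Both $w_{r,\alpha}$ and $v_{r,\alpha}$ are zero on $\T^\alpha\cap \ov{B(O,r)}$, since $v_{r,\alpha}$ vanishes on $\partial B(O,r)$ and both are extended by $0$ inside. So the claim is a Poincaré inequality for $H^1(\T^\alpha)$ functions vanishing on $\T^\alpha\cap B(O,r)$. The only real content is that the constant is uniform both in $\alpha\in(0,\pi/2)$, where the triangles degenerate at both ends, and in $r$ small.

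\emph{Plan.} I would work in polar coordinates centred at $O$. The domain $\T^\alpha\setminus B(O,r)$ is exactly $\{(\rho,\phi): \alpha_0<\phi<\alpha,\ r<\rho<R(\phi)\}$ with $R(\phi)=\cos\alpha/\cos\phi\le 1$. Every ray from $O$ at angle $\phi\in(0,\alpha)$ is a segment $[0,R(\phi)]$ inside $\T^\alpha$, and $\psi$ vanishes on its initial part $[0,r]$ or on all of it when $R(\phi)\le r$. Hence it suffices to prove a one-dimensional weighted Poincaré inequality on each ray: for $f\in H^1(0,R)$ with $R\le 1$ and $f=0$ on $[0,r]$,
\[
\int_0^R f(\rho)^2\rho\,\d\rho\le P\int_0^R f'(\rho)^2\rho\,\d\rho ,
\]
with $P$ independent of $R\le 1$ and of $r\le r^*$. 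Integrating in $\phi$ and using $|\partial_\rho\psi|\le|\nabla\psi|$ then gives the lemma with $\overline P=P$, uniformly in $\alpha$, because only the radial derivative is used and the angular width never enters.

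\emph{The 1D inequality.} For $\rho>r$ write $f(\rho)=\int_r^\rho f'(s)\,\d s$. Cauchy--Schwarz with weight $s$ gives $f(\rho)^2\le \ln(\rho/r)\int_r^R f'(s)^2 s\,\d s$. Multiplying by $\rho$ and integrating over $(r,R)$ yields
\[
\int f^2\rho\,\d\rho\le \Big(\int_r^1\rho\ln(\rho/r)\,\d\rho\Big)\int f'^2\rho\,\d\rho\le \tfrac12|\ln r|\int f'^2\rho\,\d\rho .
\]
This is the main obstacle: the naive bound produces a constant of order $|\ln r|$, which blows up as $r\to0$. That is the familiar fact that in two dimensions a small disc has vanishing capacity, so a Dirichlet condition on $B_r$ alone does not give a uniform Poincaré constant.

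\emph{Fixing the obstacle.} A uniform constant must exploit extra information about $\psi$ beyond its vanishing on $B_r$. The first thing I would try is to use the equation. Since $-\Delta\psi=0$ in $\T^\alpha\setminus\ov{B_r}$, $\psi=0$ on $\partial B_r$, $\partial_n\psi=0$ on $[O,A],[O,B]$, and $\partial_n\psi=-g^\alpha$ on $[\bar A,B]$, testing the equation with $\psi$ gives $\int|\nabla\psi|^2=-\int_{[\bar A,B]}g^\alpha\psi$. If I can show that $\psi$ is close to a radial-plus-bounded profile, the logarithmic part is already absorbed into $v_{r,\alpha}$ by its design. Then I would split $\psi=\bar\psi+(\psi-\bar\psi)$, where $\bar\psi(\rho)$ is the angular average on the arc of radius $\rho$. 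The oscillating part satisfies a Wirtinger inequality on arcs with constant $\le(\alpha/\pi)^2\rho^2\le 1$. This step needs care on the arcs that are cut by the side $[A,B]$; there I would use instead a Poincaré inequality along horizontal segments parallel to $[O,A]$. The radial mean $\bar\psi$ is harmonic in the one-dimensional averaged sense, so its flux $\rho\bar\psi'(\rho)$ is constant up to the Neumann data. Because $\int g^\alpha=0$ along $[A,B]$ (the defining normalization $\frac{\sin2\alpha}{4\alpha}$ makes the total flux match the area), that flux vanishes, and $\bar\psi$ vanishing at $\rho=r$ stays controlled without a $\ln r$ loss. If this averaging argument proves too delicate, the fallback is to accept the constant $\tfrac12|\ln r|$ from the 1D step and check whether the later estimates in Proposition~\ref{prop:Convergence}, which already carry a factor $1/|\ln r|$, tolerate it.
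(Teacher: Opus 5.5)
Your diagnosis of the obstruction is right, and on the full-arc region your mechanism is essentially the paper's key step. For $r<\rho<\cos\alpha$ the flux of $\psi_{r,\alpha}$ through $\partial B(O,\rho)\cap\T^\alpha$ equals $-\int_{[A,B]}g^\alpha=0$, so the angular mean vanishes. Wirtinger then gives $\int_{\T^\alpha\cap B(O,\cos\alpha)}\psi_{r,\alpha}^2\le(\alpha/\pi)^2\cos^2\alpha\int_{\T^\alpha}|\nabla\psi_{r,\alpha}|^2$. The paper extracts the same information, zero mean on the arc of radius $\cos\alpha$, by testing with the logarithmic cutoff $z_{r,\alpha}$.

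The proof is incomplete on the rest of the triangle. Your horizontal slicing of $\T^\alpha\sm B(O,\cos\alpha)$ fails. For $\sin\alpha\cos\alpha<y<\sin\alpha$ the horizontal segment at height $y$ lies entirely outside $B(O,\cos\alpha)$ and joins $[O,B]$ to $[A,B]$, both Neumann sides, so nothing anchors $\psi_{r,\alpha}$ on it. This piece has height $\sin\alpha(1-\cos\alpha)$, which is not small as $\alpha\to\pi/2$.

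Even on the segments that do end on the arc, you need a uniform trace bound $\int_{\partial B(O,\cos\alpha)\cap\T^\alpha}\psi_{r,\alpha}^2\le C\int_{\T^\alpha}|\nabla\psi_{r,\alpha}|^2$, and you never establish it. Wirtinger on interior arcs controls $\psi_{r,\alpha}$ inside the sector, not its trace. The paper gets the trace bound from the mean-zero condition and the first nontrivial Steklov--Neumann eigenvalue $\pi/(\alpha\cos\alpha)$ of the sector. It then treats $\T^\alpha\sm B(O,\cos\alpha)$ as a cone with vertex $B$: every ray from $B$ inside $\T^\alpha$ ends on that arc, and slicing along these rays is what works.

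Second, you never treat the case $\cos\alpha\le r$, i.e.\ $\alpha$ within $O(r)$ of $\pi/2$. There the interval $(r,\cos\alpha)$ is empty, so the averaging argument gives nothing. Meanwhile rays from $O$ near angle $\alpha$ have length close to $1$, so your radial bound is still of order $|\ln r|$. The paper handles this case separately: it reflects across $[A,B]$ and compares with the sector of aperture $\pi-2\alpha$ centred at $B$, with Dirichlet condition on its arc. Slicing by vertical segments also works, since each starts on $\partial B(O,r)$ and has length at most $1$.

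Finally, the fallback of keeping the constant $\tfrac12|\ln r|$ does not prove the statement. Along the paper's route it is also not harmless. In the proof of Lemma~\ref{lem:zeroOrderBounds} it would only give $\bigl|\int_{\T^\alpha}\psi_{r,\alpha}\bigr|\le C|\ln r|\,|\T^\alpha|^{1/2}$, the same order as the leading term $|\ln r|J_0(\alpha)$.
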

 
\begin{proof}
{\it In the following, we denote by $C$ a positive constant which may increase from line to line.}
Fix $0<r<r^*$ small and $\alpha\in(0,\pi/2)$. First, assume $\cos(\alpha) \le r$. We make a reflection of the triangle $\T^\alpha$ along its vertical edge $[A,B]$. We obtain a triangle $\tilde \T^\alpha$ of vertices $O,B,\tilde O$ on which we define a function $\tilde \psi_{r,\alpha}$ by reflection of $\psi_{r,\alpha}$. It satisfies naturally
$$ \frac{\int_{\tilde \T^\alpha} |\nabla \tilde \psi_{r,\alpha}|^2}{\int_{\tilde\T^\alpha} \tilde\psi_{r,\alpha}^2} = \frac{\int_{ \T^\alpha} |\nabla \psi_{r,\alpha}|^2}{\int_{\T^\alpha} \psi_{r,\alpha}^2}, $$
and additionally, it satisfies Neumann boundary conditions on the the sides $[O,B]$ and $[B,\tilde O]$ and Dirichlet boundary condition of the side $[O,\tilde O]$ of $\tilde \T^\alpha$. Extending the Dirichlet region, we have that $\tilde\psi_{r,\alpha}$ is an $H^1$ function of the sector $S(\pi-2\alpha)$ of angle $\pi-2\alpha$ and radius $1$ which vanish on the arc circle and satisfies Neumann boundary conditions on the two radii. Since the first Dirichlet eigenfunction of the ball of radius $1$ is radially symmetric, it is also eigenfunction on each sector, so that
$$ \frac{\int_{\tilde \T^\alpha} |\nabla \tilde \psi_{r,\alpha}|^2}{\int_{\tilde\T^\alpha} \tilde\psi_{r,\alpha}^2} \ge \lambda_1(B_1).$$
And therefore this also holds for the function $\psi_{r,\alpha}$,
$$ \int_{\T^\alpha} \psi_{r,\alpha}^2 \le \frac1{\lambda_1(B_1)} \int_{\T^\alpha} |\nabla\psi_{r,\alpha}|^2.$$
\smallskip

Now, consider that $\cos(\alpha) > r$. In this setting, $\bar A=A$.
From the equations satisfied by the two functions, $w_{r,\alpha}$ and $v_{r,\alpha}$ we find that $\psi_{r,\alpha}$ satisfies for any $u\in H^1(\T^\alpha)$, $u = 0$ on $B(O,r)\cap \T^\alpha$, 
$$ \int_{\T^\alpha} \nabla \psi_{r,\alpha} \cdot \nabla u = -\int_{[A,B]} g^\alpha \, u. $$

In the spirit of Cioranescu and Murat, we consider the function $z_{r,\alpha}$ 
$$
z_{r,\alpha} = \left\{\begin{array}{ll}
0 &\text{ in } B(O,r),\\
1 &\text{ in } \R^2\setminus B(O,\cos\alpha),\\
\frac{\ln(|x|/r)}{\ln(\cos(\alpha)/r)} &\text{ in }B(O,\cos\alpha) \setminus B(O,r),
\end{array}\right.
$$
and test the equation with $z_{r,\alpha}|_{\T^\alpha\sm \ov B(O,r)}$.
Since $\int_{[A,B]} g^\alpha = 0$ and $z_{r,\alpha}|_{[A,B]}=1$, we find
$$ \int_{\T^\alpha}\nabla\psi_{r,\alpha} \cdot \nabla z_{r,\alpha} = -\int_{[A,B]} g^\alpha\,z_{r,\alpha} = 0. $$

Now, relying on the expression of $z_{r,\alpha}$ and integrating by parts on $\T^\alpha \cap (B(O,\cos\alpha) \setminus B(O,r))$, we deduce 
$$ \int_{\T^\alpha} \nabla\psi_{r,\alpha} \cdot \nabla z_{r,\alpha} = \frac{1}{\cos(\alpha)\ln(\cos(\alpha)/r)} \int_{\partial B(O,\cos(\alpha))\cap\T^\alpha} \psi_{r,\alpha}, $$
and therefore,
$$ \int_{\partial B(O,\cos(\alpha))\cap\T^\alpha} \psi_{r,\alpha} =0.$$

Let's first focus on the angular sector, $\T^\alpha\cap B(O,\cos(\alpha))$. The function $\psi_{r,\alpha}$ has zero mean value on the arc of circle $\partial B(O,\cos(\alpha))$. The first nontrivial eigenvalue of the angular sector of radius $1$ with with Steklov condition on the arc and Neumann conditions on the radii equals $\pi/\alpha$ and scales as $t^{-1}$. Therefore, $\psi_{r,\alpha}$ satisfies
$$\frac{\pi}{\alpha\cos\alpha} \int_{\partial B(O,\cos \alpha)\cap \T^\alpha} \psi_{r,\alpha}^2 \le \int_{B(O,\cos \alpha)\cap \T^\alpha} |\nabla \psi_{r,\alpha}|^2.$$
Denoting $\lambda_1(B_1,1)$ the first Robin eigenvalue of the disc of radius $1$ with Robin parameter $1$, we get after rescaling $\lb_1(B_{\cos \alpha}, \frac{1}{\cos \alpha})= \frac 1{\cos^2\alpha} \lambda_1(B_1,1)$. Since the first Robin eigenfunction on the disc is radially symmetric, it also eigenfunction on the sector, so that the eigenvalues are the same and
$$\frac{\lambda_1(B_1,1)}{\cos^2\alpha} \int_{ B(O,\cos \alpha )\cap\T^\alpha} \psi_{r,\alpha}^2 \le\int_{ B(O,\cos\alpha )\cap\T^\alpha} |\nabla\psi_{r,\alpha}|^2 + \frac1{\cos\alpha } \int_{\partial B(O,\cos\alpha)\cap\T^\alpha} \psi_{r,\alpha}^2.$$
Combining the two, we deduce that,
$$ \int_{ B(O,\cos\alpha)\cap \T^\alpha} \psi_{r,\alpha}^2 \le C \int_{B(O,\cos\alpha)\cap \T^\alpha} |\nabla \psi_{r,\alpha}|^2 \le C \int_{\T^\alpha} |\nabla \psi_{r,\alpha}|^2.$$

We claim that there exists a constant $C>0$ independent on $\alpha\in (0, \frac \pi2)$ such that 
\begin{equation}\label{d14}
C\int_{\T^\alpha\sm B(O,\cos\alpha)}\psi_{r,\alpha}^2 \le \int_{\T^\alpha\sm B(O,\cos\alpha)} |\nabla\psi_{r,\alpha}|^2+\int_{\partial B(O,\cos\alpha)\cap\T^\alpha} \psi_{r,\alpha}^2.
\end{equation}
Indeed, the set $ \T^\alpha\sm B(O,\cos\alpha)$ is a cone with vertex $B$, aperture $\frac\pi2-\alpha$ of area 

$$a_\alpha=\frac{\sin \alpha \cos \alpha}{2}- \alpha\frac{\cos^2\alpha}2.$$

The first mixed Robin-Neumann eigenvalue in this cone, with Robin condition on $\partial B(O, \cos\alpha) \cap \T^\alpha$ with parameter $1$ and Neumann condition on the radii, is not smaller than the similar eigenvalue of the sector of aperture $\frac \pi2 -\alpha$ and radius $r_\alpha$ where $r_\alpha$ is chosen to keep the same area, by 

$$\frac{\frac\pi2-\alpha}{2}r^2_\alpha= a_\alpha.$$
This assertion can be proved following step by step the strategy of Bossel-Daners \cite{Da06} based on the H-function, the radiality of the Robin eigenfunction on the ball and the isoperimetric inequality. In a sector, the Robin-Neumann first eigenfunction is the restriction of the Robin eigenfunction of the disc and the relative isoperimetric sets in a cone are the sectors.

The sector eigenvalue equals $\lb_1(B_{r_\alpha}, 1)$. Note that $r_\alpha$ remains bounded from above since $ \cos \alpha \sim \frac\pi2-\alpha$ when $\alpha \to \frac \pi2^-$. As a consequence, $\lb_1(B_{r_\alpha}, 1)$ stays bounded away from $0$, uniformly on $\alpha \in (0, \frac\pi2)$, thus leading to the validity of \eqref{d14}.

Finally, putting together all the previous inequalities, we get
$$ \int_{\T^\alpha\setminus B(O,\cos\alpha )} \psi_{r,\alpha}^2 \le C \int_{\T^\alpha} |\nabla\psi_{r,\alpha}|^2 $$
with a constant $C$ independent on $\alpha \in (0, \frac \pi2)$. 
\end{proof}

Below, we provide a series of useful bounds satisfied by the functions $w_{r,\alpha}$ and $v_{r,\alpha}$.
\begin{lem}\label{lem:zeroOrderBounds}
There exists a positive constant $\overline C$ such that for any $r<r^*$ sufficiently small, and any $\alpha \in (0,\pi/2)$, the following bounds hold:

\begin{enumerate}
\item $$ |J_r(\alpha) -H_r(\alpha)| \le \overline C \,|\T^\alpha|^\frac 32s,$$
\item $$ \|\nabla w_{r,\alpha} -\nabla v_{r,\alpha}\|_{L^2(\T^\alpha)}\le \overline C\,|\T^\alpha|^\frac 12,$$
\item $$ \frac{\|\nabla v_{r,\alpha}\|_{L^2(\T^\alpha)}}{\sqrt{|\ln(r)|}}\le \overline C\, |\T^\alpha| ^\frac 12 ,$$
\item $$ \frac{\|\nabla w_{r,\alpha}\|_{L^2(\T^\alpha)}}{\sqrt{|\ln(r)|}}\le \overline C\, |\T^\alpha| ^\frac 12 .$$
\end{enumerate}
\end{lem}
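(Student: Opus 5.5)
The plan is to work with $\psi_{r,\alpha}=w_{r,\alpha}-v_{r,\alpha}$ and its weak equation
$$\int_{\T^\alpha}\nabla\psi_{r,\alpha}\cdot\nabla u=-\int_{[\bar A,B]}g^\alpha u,\qquad u\in H^1(\T^\alpha),\ u=0 \text{ on } B(O,r)\cap\T^\alpha .$$
The intermediate target is the estimate $\|\nabla\psi_{r,\alpha}\|_{L^2(\T^\alpha)}\le C|\T^\alpha|$, which is stronger than item (2). Items (1) and (2) then both follow from it. Testing the equation with $u=\psi_{r,\alpha}$, which is admissible since both $w_{r,\alpha}$ and $v_{r,\alpha}$ vanish on $B(O,r)$, gives
$$\|\nabla\psi_{r,\alpha}\|_{L^2}^2=-\int_{[\bar A,B]}g^\alpha\psi_{r,\alpha}\le \|g^\alpha\|_{L^\infty}\,|[\bar A,B]|^{1/2}\,\|\psi_{r,\alpha}\|_{L^2([\bar A,B])}.$$
I then need a trace bound $\|\psi_{r,\alpha}\|_{L^2([\bar A,B])}\le C\|\nabla\psi_{r,\alpha}\|_{L^2(\T^\alpha)}$ with $C$ uniform in $\alpha$ and $r$.

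To get this trace bound, I would reuse the decomposition from Lemma~\ref{lem:PoincarePsi}. In the regime $\cos\alpha>r$, I use the Steklov/Robin bounds on $\T^\alpha\cap B(O,\cos\alpha)$ together with the cone region $\T^\alpha\setminus B(O,\cos\alpha)$, whose size is controlled uniformly; a trace inequality on that cone toward its base $[A,B]$, combined with Lemma~\ref{lem:PoincarePsi}, should give the claimed trace estimate. In the regime $\cos\alpha\le r$, I would again use the reflection across $[A,B]$ and the sector comparison.

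Next comes the pointwise smallness of $g^\alpha$, and this step carries the power of $|\T^\alpha|$. For $\alpha\to0$, expanding $\frac{\sin 2\alpha}{4\alpha}=\frac12-\frac{\alpha^2}{3}+O(\alpha^4)$ and $\frac{\cos\alpha}{\cos^2\alpha+y^2}=\frac1{\cos\alpha}\big(1+O(y^2)\big)$ with $y\le\sin\alpha$ yields $\|g^\alpha\|_\infty\le C\alpha^2$. For $\alpha\to\pi/2$ both terms are $O(\cos\alpha)$, so $\|g^\alpha\|_\infty\le C\cos\alpha$. Since $|\T^\alpha|=\frac12\sin\alpha\cos\alpha$, in all cases $\|g^\alpha\|_\infty|[\bar A,B]|^{1/2}\le C|\T^\alpha|$. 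Near $0$ this follows from $\alpha^2\cdot\alpha^{1/2}\lesssim\alpha$, and near $\pi/2$ from $\cos\alpha\cdot1\lesssim\cos\alpha$. Dividing the energy inequality by $\|\nabla\psi_{r,\alpha}\|_{L^2}$ therefore gives $\|\nabla\psi_{r,\alpha}\|_{L^2}\le C|\T^\alpha|\le C|\T^\alpha|^{1/2}$, which is (2).

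For (1), I write $J_r(\alpha)-H_r(\alpha)=\int_{\T^\alpha}\psi_{r,\alpha}$. Cauchy--Schwarz and Lemma~\ref{lem:PoincarePsi} then give
$$|J_r-H_r|\le|\T^\alpha|^{1/2}\|\psi_{r,\alpha}\|_{L^2}\le \overline P^{1/2}|\T^\alpha|^{1/2}\|\nabla\psi_{r,\alpha}\|_{L^2}\le C|\T^\alpha|^{3/2}.$$
For (3), I compute directly in polar coordinates using
$$|\nabla v_{r,\alpha}|\le\frac{\sin2\alpha}{4\alpha|x|}+\frac{|x|}{2}.$$
The first term contributes at most $\alpha\big(\frac{\sin2\alpha}{4\alpha}\big)^2\int_r^{1}\frac{d\rho}{\rho}\le C\sin 2\alpha\,|\ln r|\le C|\T^\alpha||\ln r|$, and the second contributes at most $C|\T^\alpha|$. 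Item (4) follows from (2), (3) and the triangle inequality, using $|\ln r|\ge1$ for $r<r^*$.

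The main obstacle is the uniform trace inequality on $[\bar A,B]$, in particular as $\alpha\to0$ (thin triangles) and across the transition $\cos\alpha\approx r$. If a clean uniform constant is hard to obtain, I would first try to subtract the mean of $\psi_{r,\alpha}$ on $[\bar A,B]$, which is allowed because $\int g^\alpha=0$. Then I would bound the oscillation by a one-dimensional Poincaré inequality on the segment, plus a trace estimate from the thin strip adjacent to it. The resulting factors of $\alpha$ are absorbed by the $\alpha^2$ smallness of $g^\alpha$.
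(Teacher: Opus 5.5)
Your treatment of the regime $\alpha\to0$ is sound: there $g^\alpha=O(\alpha^2)$ and the trace onto the short side $[A,B]$ is well controlled. Your direct polar bound for (3) is fine, and more direct than the paper's route through $\int|\nabla v_{r,\alpha}|^2=H_r(\alpha)+\int_{[\bar A,B]}g^\alpha v_{r,\alpha}$; (4) then follows from (2) and (3) as you say. The gap is the regime $\alpha\to\pi/2$, where two of your claims fail.

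First, $\|g^\alpha\|_{L^\infty}\le C\cos\alpha$ is false. The factor $\frac{\sin2\alpha}{4\alpha}\approx\frac{\cos\alpha}{\pi}$ multiplies $\frac{\cos\alpha}{\cos^2\alpha+y^2}\approx\frac1{\cos\alpha}$ for $y\lesssim\cos\alpha$. Indeed $g^\alpha(0)=\frac{\sin\alpha}{2\alpha}-\frac{\cos\alpha}{2}\to\frac1\pi$ whenever $\bar A=A$. Only $\|g^\alpha\|_{L^1}$ and $\|g^\alpha\|_{L^2}^2$ are $O(\cos\alpha)$.

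Second, the information in Lemma~\ref{lem:PoincarePsi} does not give a trace bound $\|\psi_{r,\alpha}\|_{L^2([\bar A,B])}\le C\|\nabla\psi_{r,\alpha}\|_{L^2(\T^\alpha)}$ uniform in $\alpha$. For $\alpha$ near $\pi/2$, $[A,B]$ is the long side of a triangle of width about $\cos\alpha$. Take $u=(y-\cos\alpha)_+$. It vanishes on $B(O,r)$ and on the arc $\partial B(O,\cos\alpha)\cap\T^\alpha$, and satisfies a Poincar\'e inequality with constant independent of $\alpha$. Yet $\int_{[A,B]}u^2\approx\frac13$ while $\int_{\T^\alpha}|\nabla u|^2\approx\frac{\cos\alpha}{2}$. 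The cone $\T^\alpha\setminus B(O,\cos\alpha)$ has small area but degenerate shape, which is why the paper only gets $C_T(\alpha)\lesssim(\frac\pi2-\alpha)^{-1}$. With the correct ingredients, testing with $\psi_{r,\alpha}$ yields only $\|\nabla\psi_{r,\alpha}\|_{L^2}\le C$. That is the paper's first step, and it is too weak for (2) as $|\T^\alpha|\to0$.

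The idea you are missing is the energy identity $\int_{\T^\alpha}|\nabla\psi_{r,\alpha}|^2=\int_{\T^\alpha}\psi_{r,\alpha}+\int_{[\bar A,B]}g^\alpha v_{r,\alpha}$. It comes from testing the equations of $w_{r,\alpha}$ and $v_{r,\alpha}$ against each other, and it converts the uncontrolled boundary term $\int g^\alpha w_{r,\alpha}$ into the volume term $-\int\psi_{r,\alpha}$. The remaining term $\int g^\alpha v_{r,\alpha}$ is explicit and is $O(|\T^\alpha|)$: use the zero mean of $g^\alpha$ when $\cos\alpha>r$, and boundedness of $v_{r,\alpha}$ on $[\bar A,B]$ otherwise. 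Combine this with $|\int\psi_{r,\alpha}|\le\overline P^{1/2}|\T^\alpha|^{1/2}\|\nabla\psi_{r,\alpha}\|_{L^2}$. Writing $X=\|\nabla\psi_{r,\alpha}\|_{L^2}^2$, you get $X\le C|\T^\alpha|^{1/2}X^{1/2}+C|\T^\alpha|$, hence $X\le C|\T^\alpha|$. This is (2), then (4), and also $|J_r-H_r|\le C|\T^\alpha|$.

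Your stronger target $\|\nabla\psi_{r,\alpha}\|_{L^2}\le C|\T^\alpha|$ is in fact false near $\pi/2$. For $r\ll\cos\alpha\ll1$, a thin-strip computation gives, away from $O$, $w_{r,\alpha}\approx\frac{\sin2\alpha}{4\alpha}\ln\frac{\cos\alpha}{r}+\frac y2-\frac{y^2}{4}$ and $v_{r,\alpha}\approx\frac{\sin2\alpha}{4\alpha}\ln\frac yr-\frac{y^2}{4}$. So $\psi_{r,\alpha}\approx\frac y2$, $\|\nabla\psi_{r,\alpha}\|_{L^2}^2\approx\frac{\cos\alpha}{8}$, and $J_r(\alpha)-H_r(\alpha)\approx\frac{\cos\alpha}{12}\approx\frac{|\T^\alpha|}{6}$. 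Hence exponent $\frac32$ in (1) is not attainable near $\alpha=\pi/2$. The paper's own bootstrap also delivers only exponent $1$, and that is all Proposition~\ref{prop:Convergence}(1) uses.
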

 
\begin{proof}
{\it In the following, we denote by $C$ a positive constant which may increase from line to line.} 
Fix $0<r<r^*$, with $r^*$ small and $\alpha\in(0,\pi/2)$. For all $u\in H^1(\T^\alpha)$ which vanish on $B(O,r)\cap \T^\alpha$, the weak form of the equation satisfied by $\psi_{r,\alpha}$ gives
$$ \int_{\T^\alpha} \nabla \psi_{r,\alpha}\cdot \nabla u = - \int_{[\bar A,B]} g^\alpha\,u. $$

Inserting $u= \psi_{r,\alpha}$ and relying on Lemma~\ref{lem:PoincarePsi}, we get
$$\int_{\T^\alpha}|\nabla\psi_{r,\alpha}|^2 \le (1+\overline P) \,C_T(\alpha)\,\int_{[A,B]}{(g^\alpha)}^2,$$
where 
$$C_T(\alpha) = \sup\Big\{\frac{\int_{[A,B]}u^2}{\int_{\T^\alpha} u^2 +\int_{\T^\alpha}|\nabla u|^2} ~\Big|~ u\in H^1(\T^\alpha)\Big\},$$ 
denotes the square of the trace constant associated to the trace operator $H^1(\T^\alpha)\hookrightarrow L^2([A,B])$. This constant varies continuously on $(0,\alpha)$. Let us check its behavior when $\alpha\to 0$ and $\alpha\to\pi/2$. 

\smallskip
\noindent {\bf The case $ (\alpha \to 0^+)$.} When $\alpha$ is small, we reflect along the side $[O,B]$, obtaining a quadrilateral. The segment $[O,A]$ is reflected to $[O, A_1]$. Starting with $[O, A_1]$, we reflect successively $\lfloor \frac{\pi}{4\alpha}\rfloor$ times the quadrilateral with respect to $[ O, A_i]$. Finally, we obtain a domain $Q(\alpha)$ which, when $\alpha \to 0^+$, approaches the quarter of a disk. We estimate the trace on the segment $[A,B]$ estimating the $L^2$-norm of the extended function on all reflections of $[A,B]$ agains the full norm in the extended domain and get that 
$$C_T(\alpha) \le \overline C_T(Q(\alpha)).$$
Above, for a generic bounded, open, Lipschitz set $\O\subset\Rtwo$, we denote by $\overline C_T(\O)$ the square of the continuity constant of the trace operator $H^1(\O) \hookrightarrow L^2(\partial \O)$. Now, since $\alpha$ is small, the family of domains $(Q(\alpha))_\alpha$ is uniformly bounded and satisfy a uniform cone condition, so there is a universal constant $C^*$ such that 
$$ \overline C_T(Q(\alpha)) \le C^*. $$
In particular, for $\alpha$ small, the constant $C_T(\alpha)$ remains bounded. 

\smallskip
\noindent {\bf The case $ (\alpha \to \frac \pi2^-)$.} 
We use a similar reflection argument. We first reflect across $[B,O]$ and obtain $[B, A_1]$ for the reflection of $[B,A]$, and then successively across $[B, A_i]$ the quadrilateral obtained in this way, $\lfloor \frac{\pi}{4(\frac\pi2-\alpha)}\rfloor$ times, building the set $Q(\pi/2-\alpha)$, close to a quarter of a disc.
In the regime $\alpha$ close to $\pi/2$, we still have 
$$ \overline C_T(Q(\pi/2-\alpha)) \le C^*,$$
so that 
$$C_T(\alpha) \le \frac{C}{\pi/2-\alpha}\approx \frac{ C}{\cos\alpha}, \mbox{when}\; \alpha \to \frac \pi2 ^-.$$
The extra factor $\frac{1}{\pi/2-\alpha}$ comes from the fact that the trace on a single, fixed, segment $[A,B]$ is bounded by the full norm in $Q(\pi/2-\alpha)$, which means that on the initial triangle this factor pops up.

From this analysis, we deduce that we can bound for any $\alpha\in(0,\pi/2)$,
$$C_T(\alpha) \le \frac{ C}{\pi/2-\alpha}.$$

Additionally, relying on the elementary computation
\[
\int_0^{\sin\alpha} \frac{dy}{(\cos^2\alpha+y^2)^2}
=
\frac{\sin\alpha}{2\cos^2 \alpha(\cos^2\alpha+\sin^2\alpha)}
+\frac{1}{2\cos^3\alpha}\arctan\!\left(\frac{\sin \alpha}{\cos\alpha}\right),
\]
direct computation gives
$$ \int_{[A,B]} ({g^\alpha})^2\le C\,\cos\alpha. $$
Therefore, the product,
$$ C_T(\alpha) \int_{[A,B]} {(g^\alpha)}^2,$$
remains bounded for $\alpha \in(0,\pi/2)$.

Consequently, we obtain that for any $0<r<r^*$ and $\alpha\in(0,\pi/2)$, 
$$ \int_{\T^\alpha} |\nabla \psi_{r,\alpha}|^2 \le C. $$

From Cauchy-Schwartz and Lemma \ref{lem:PoincarePsi}, we obtain 
$$ \Big|\int_{\T^\alpha} \psi_{r,\alpha}\Big|^2 \le \overline P\, |\T^\alpha|\, \int_{\T^\alpha} |\nabla \psi_{r,\alpha}|^2 \le C\, |\T^\alpha|. $$ 

Now, relying on the equations satisfied by $\psi_{r,\alpha}$ and $v_{r,\alpha}$, we obtain 
$$\int_{\T^\alpha} |\nabla \psi_{r,\alpha}|^2 = \int_{\T^\alpha} \psi_{r,\alpha} + \int_{[A',B]} g^\alpha\, v_{r,\alpha}.$$

By direct computation, for a positive constant $C$ we get
$$\int_{[\bar A,B]} g^\alpha v_{r,\alpha} \le C\, |\T^\alpha|.$$
Indeed, assume $r\le\cos\alpha$, so that $\bar A = A$. Since $\int_{[A,B]} g^\alpha =0$, we have
\[\begin{split}
&\int_{[A,B]} g^\alpha v_{r,\alpha}=\int_{[A,B]} g^\alpha \Big(\frac{\sin (2\alpha)}{8\alpha}\ln(\cos^2 \alpha+y^2)-\frac {y^2}{4}\Big)dy\\
&=\int_{[A,B]} \Big (\frac{\sin(2\alpha)}{4\alpha} \frac{\cos(\alpha)}{\cos(\alpha)^2 + y^2} -\frac{\cos(\alpha)}{2} \Big)\Big(\frac{\sin (2\alpha)}{8\alpha}\ln(\cos^2 \alpha+y^2)-\frac {y^2}{4}\Big)dy,
\end{split}\]
and the bound comes by direct estimate.
 
Assume now, $r>\cos(\alpha)$. On $[\bar A, B]$, $v_{r,\alpha}$ has expression

$$ \forall y\in [r\sin(\alpha_0), \sin(\alpha)],\ v_{r,\alpha}(\cos(\alpha), y) = \frac{\sin(2\alpha)}{8\alpha}\ln\left(\frac{\cos(\alpha)^2 + y^2}{r^2}\right) + \frac{r^2 - (\cos(\alpha)^2+y^2)}4,$$

and since the function $r\mapsto r|\ln(r)|$ remains bounded for $r<r^*$, we can bound,
$$ |v_{r,\alpha}(\cos(\alpha), y)|\le \frac{\sin(\alpha)}{2\alpha} r|\ln(r)| + \frac{1-r^2}4 \le C.$$

As a consequence, we obtain 
$$\Big|\int_{[\bar A, B]} g^\alpha\, v_{r,\alpha}\Big| \le C \int_{[A,B]}|g^\alpha|, $$

and a direct estimate gives 
$$ \int_{[A,B]}|g^\alpha| \le \frac{\sin(2\alpha)}2 = 2|\T^\alpha|. $$

Consequently, by bootstrap, we obtain, for some constant positive $\overline C$, for any $0<r<r^*$ and $\alpha\in (0,\pi/2)$
$$ \Big|\int_{\T^\alpha} \psi_{r,\alpha}\Big| \le \overline C |\T^\alpha|^{3/2}.$$

This proves the first and second assertions.

Finally, noticing that 
$$ \int_{\T^\alpha} |\nabla v_{r,\alpha}|^2=H_r(\alpha) + \int_{[\bar A,B]} g^\alpha \,v_{r,\alpha},$$
we deduce from Lemma~\ref{lem:limitH} that
$$ 
\int_{\T^\alpha} \frac{|\nabla v_{r,\alpha}|^2}{|\ln(r)|} \le C\, |\T^\alpha|.
$$
We combine this with the bound on $\|\psi_{r,\alpha}\|_{H^1(\T^\alpha)}$ to obtain 
$$
\int_{\T^\alpha} \frac{|\nabla w_{r,\alpha}|^2}{|\ln(r)|} \le C\, |\T^\alpha|,
$$
which proves the third and fourth assertion.
\end{proof}

\noindent {\it Proof of Proposition~\ref{prop:Convergence}}, assertion (1).
Relying on the first assertions of Lemmas~\ref{lem:limitH} and \ref{lem:zeroOrderBounds}, respectively, there exists a constant $\overline C$ such that for $0<r<r^*$ and $\alpha \in (0,\pi/2)$, it holds
$$ \big|J_r(\alpha)-|\ln(r)|\,J_0(\alpha)\big| \le \|w_{r,\alpha} - v_{r,\alpha}\|_{L^1(\T^\alpha)} + \big| H_r(\alpha) -|\ln(r)|\, J_0(\alpha)\big | \le \overline C |\T^\alpha|.$$

\subsection{The second order convergence}

Let us fix two constants $0<\alpha_m<\alpha_M<\pi/2$ and $\alpha \in (\alpha_m,\alpha_M)$. We also suppose that $r<r^* < \cos(\alpha_M)$. In this setting, the disk $B(O,r)$ does not overlap with the edge $[A,B]$ of the triangle $\T^\alpha$.

To prove the second assertion of Proposition~\ref{prop:Convergence}, we are going this time to compare the derivatives of $J_r$ to those of $H_r$.

We first introduce
$$ \bar J_r(\T^\alpha\sm B(O,r)) = \frac12\int_{\T^\alpha\sm B(O,r)} |\nabla w_{r,\alpha}|^2,$$
$$ \bar H_r(\T^\alpha\sm B(O,r)) = \frac12\int_{\T^\alpha\sm B(O,r)} |\nabla v_{r,\alpha}|^2.$$
Note that $ J_r(\alpha) = 2\bar J_r(\T^\alpha\sm B(O,r)) $ and $H_r(\alpha) = 2\bar H_r(\T^\alpha\sm B(O,r)) - \int_{[A,B]} g^\alpha v_{r,\alpha} $.

To compute the derivatives of the functions $\alpha \mapsto \bar J_r(\T^\alpha\sm B(O,r))$ and $\alpha \mapsto \bar H_r(\T^\alpha\sm B(O,r)) $ we are going to express them with the formalism of distributed Eulerian shape derivatives in the spirit of Laurain in \cite[Sections 6-7]{L20}. 

We introduce for $\alpha \in (\alpha_m,\alpha_M)$ and $t \in (-\tau, \tau)$ such that $(1-\tau)\alpha, (1+\tau)\alpha \in (\alpha_m,\alpha_M)$,the transformation $T_{r,\alpha}^t :\Rtwo\to\Rtwo$ such that for $t\in(-\tau,\tau)$, $\T^{(1+t)\alpha} = T_{r,\alpha}^t(\T^\alpha\sm B(O,r))$. In polar coordinates, for $(\phi,\rho) \in \T^\alpha\sm B(O,r)$ we set
$$ T_{r,\alpha}^t(\phi,\ \rho ) = \left((1+t)\phi,\ \frac{\frac{\cos((1+t)\alpha)}{\cos((1+t)\phi)} - r}{\frac{\cos(\alpha)}{\cos(\phi)} - r} \, (\rho - r)\,+r\right). $$

We denote
$$V_{r,\alpha} = \frac{\d}{\d t}\Big|_{t = 0} T^t_{r,\alpha},$$
$$W_{r,\alpha} = \frac{\d}{\d t}\Big|_{t = 0} V_{r,(1+t)\alpha} \circ T^t_{r,\alpha}. $$
For $(\phi,\rho) \in \T^\alpha\sm B(O,r)$ we have
$$ V_{r,\alpha}(\phi,\rho) = \phi\rho\, \big[-\sin(\phi),\,\cos(\phi)\big] + (\rho - r) \frac{\phi \tan(\phi)\cos(\alpha)-\alpha\sin(\alpha)}{\cos(\alpha)-r\cos(\phi)} \big[\cos(\phi),\, \sin(\phi)\big] ; $$

$$ \begin{aligned}
W_{r,\alpha}(\phi,\rho) =& \Big(\phi\rho+2\phi (\rho-r)\frac{\phi \tan(\phi)\cos(\alpha)-\alpha\sin(\alpha)}{\cos(\alpha)-r\cos(\phi)} \Big)\,\big[-\sin(\phi), \cos(\phi)\big]\\
& + \left(-\phi^2\rho + (\rho-r) \frac{(\phi^2-\alpha^2)\cos(\alpha) + (1+2\phi\tan(\phi))(\phi\tan(\phi)\cos(\alpha)-\alpha\sin(\alpha))}{\cos(\alpha) - r\cos(\phi)}\right)\\
& [\cos(\phi), \sin(\phi)] .
\end{aligned}$$

To compute the second derivatives of $J_r$ and $H_r$ we differentiate the functions
$$(s,t)\mapsto \bar J_r(T^s_{r,(1+t)\alpha}\circ T^r_{r,\alpha}(\T^\alpha\sm B(O,r))),$$
$$(s,t)\mapsto \bar H_r(T^s_{r,(1+t)\alpha}\circ T^t_{r,\alpha}(\T^\alpha\sm B(O,r))). $$

Performing the first derivative, yields 
$$ \frac{\d}{\d s}\Big|_{s=0} \bar J_r(T^s_{r,(1+t)\alpha}\circ T^t_{r,\alpha}(\T^\alpha\sm B(O,r))) = D_E\bar J_r(T^t_{r,\alpha}(\T^\alpha\sm B(O,r)))(V_{r, (1+t)\alpha}) = \frac12(1+t)\alpha J_r'((1+t)\alpha), $$

$$ \frac{\d}{\d s}\Big|_{s=0} \bar H_r(T^s_{r,(1+t)\alpha}\circ T^t_{r,\alpha}(\T^\alpha\sm B(O,r))) = D_E\bar H_r(T^t_{r,\alpha}(\T^\alpha\sm B(O,r)))(V_{r, (1+t)\alpha}) = \frac12(1+t)\alpha \tilde H_r'((1+t)\alpha), $$
where $\tilde H_r(\alpha) = H_r(\alpha) + \int_{[A,B]} g^\alpha v_{r,\alpha}$ and $D_E\bar J_r$ and $D_E\bar H_r$ denote the Eulerian shape derivatives of the functionals $\bar J_r$ and $\bar H_r$.

Differentiating a second time, yields
$$ \frac{\d}{\d t}\Big|_{t=0} D_E\bar J_r(T^t_{r,\alpha}(\T^\alpha\sm B(O,r)))(V_{r,(1+t)\alpha}) = \frac12\alpha J_r'(\alpha) + \frac12\alpha^2 J_r''(\alpha), $$
$$ \frac{\d}{\d t}\Big|_{t=0} D_E\bar H_r(T^t_{r,\alpha}(\T^\alpha\sm B(O,r)))(V_{r,(1+t)\alpha}) = \frac12\alpha \tilde H_r'(\alpha) + \frac12\alpha^2 \tilde H_r''(\alpha). $$

To compute the first Eulerian shape derivatives, note that up to renaming $\alpha = (1+t)\alpha$ it is enough to compute 
$$ \frac{\d}{\d s}\Big|_{s=0} \bar J_r(T^s_{r,\alpha}(\T^\alpha\sm B(O,r))), $$
$$ \frac{\d}{\d s}\Big|_{s=0} \bar H_r(T^s_{r,\alpha}(\T^\alpha\sm B(O,r))). $$

We follow the computations of Laurain \cite[Proposition 6]{L20}, taking into account that $v_{r,\alpha}$ has a boundary term.
Let $u\in\{w,v\}$. Applying the change of variable, $x\mapsto T^s_{r,\alpha}(x)$ we obtain 
\be\label{eq:changeofvariable}
\begin{aligned}
\frac 12 \int_{T^s_{r,\alpha}(\T^\alpha\sm B(O,r))} |\nabla u_{r,(1+s)\alpha}|^2 = \frac12\int_{\T^\alpha\sm B(O,r)} \det (DT^s_{r,\alpha}) |\nabla u_{r,(1+s)\alpha}|^2\circ T^s_{r,\alpha}\\
=\frac12\int_{\T^\alpha\sm B(O,r)} \det(DT^s_{r,\alpha}) (DT^s_{r,\alpha})^{-1}((DT^s_{r,\alpha})^{-1})^T \nabla(u_{r,(1+s)\alpha}\circ T^s_{r,\alpha})\cdot \nabla(u_{r,(1+s)\alpha}\circ T^s_{r,\alpha}).
\end{aligned}
\ee
Here by $DG$ we denote the Jacobian matrix of a vector field $G:\Rtwo\to \Rtwo$ and by $M^T$ the transpose of a matrix $M \in \R^{2\times 2}$.

Note that for $s$ sufficiently small, $\det (DT^s_{r,\alpha}) = |\det (DT^s_{r,\alpha})|$.

Let us focus now on the two functions $w^t_{r,\alpha} = w_{r,(1+t)\alpha}\circ T^t_{r,\alpha}$ and $v^t_{r,\alpha} = v_{r,(1+t)}\circ T^t_{r,\alpha}$. Following \cite[Section 2.28-2.29]{SZ92} adapted to our specific setting, we show that they are differentiable at $0$ in the sense that the two functions $t^{-1}(w^t_{r,\alpha}-w_{r,\alpha})$ and $t^{-1}(v^t_{r,\alpha}-v_{r,\alpha})$ converge respectively strongly in $H^1$ to the functions $\dot w_{r,\alpha}$ and $\dot v_{r,\alpha}$, the material shape derivatives of $w_{r,\alpha}$ and $v_{r,\alpha}$. The two functions $\dot w_{r,\alpha}$ and $\dot v_{r,\alpha}$ satisfy, respectively, for any test function $u \in H^1(\T^\alpha \sm B(O,r))$ such that $u = 0 $ on $\partial B(O,r)$, 

$$ \int_{\T^\alpha \sm B(O,r)}\nabla\dot w\cdot \nabla u = -\int_{\T^\alpha\sm B(O,r)} A'_{r,\alpha} \nabla w\cdot \nabla u + \int_{\T^\alpha\sm B(O,r)}\div(V_{r,\alpha}) u, $$
$$ \int_{\T^\alpha \sm B(O,r)}\nabla\dot v\cdot \nabla u = -\int_{\T^\alpha\sm B(O,r)} A'_{r,\alpha} \nabla v\cdot \nabla u + \int_{\T^\alpha\sm B(O,r)}\div(V_{r,\alpha}) u + \int_{[A,B]} g_\alpha' u.$$
Above,
$$ A_{r,\alpha}' = -DV_{r,\alpha} -DV_{r,\alpha}^T + \div(V_{r,\alpha})I_2,$$
$$ g'_\alpha(y) = \alpha\sin(\alpha) -\frac{\cos(\alpha)}2 +\frac{\cos(2\alpha)}2 \frac{\cos(\alpha)}{\cos(\alpha)^2 + y^2} -y\arctan\Big(\frac y{\cos(\alpha)}\Big).$$

Concerning the function $w^t_{r,\alpha}$. Applying the change of variable on $x \to T^t_{r,\alpha}(x)$ in the equation satisfied by $w_{r, (1+t)\alpha}$ we find that $w^t_{r,\alpha}$ satisfies for any $u \in H^1(\T^\alpha\sm B(O,r))$ such that $u = 0$ on $\partial B(O,r)$, 
$$ \int_{\T^\alpha} A(t)\nabla w^t_{r,\alpha}\cdot \nabla u = \int_{\T^\alpha} \det(DT^t_{r,\alpha}) u$$
with 
$$ A(t) = \det(DT^t_{r,\alpha}) (DT^t_{r,\alpha})^{-1} ((DT^t_{r,\alpha})^{-1})^T. $$ 
Note that $t\mapsto A(t)$ is smooth and satisfies $A(0) = I_2$ and 
$$\frac{\d A(t)}{\d t}\big |_{t=0} = A'_{r,\alpha}.$$
Similarly $t \mapsto \det(DT^t_{r,\alpha})$ is smooth and satisfies $ \det(DT^0_{r,\alpha}) =1$ and 
$$ \frac{\d}{\d t}\Big|_{t=0}\det(DT^t_{r,\alpha}) = \div(V_{r,\alpha}).$$
Therefore inserting $u = w^t_{r,\alpha}$, we have for $t$ small enough, 
$$ \int_{\T^\alpha\sm B(O,r)} |\nabla w^t_{r,\alpha}|^2 \le C$$
so that $w^t_{r,\alpha}$ converges weakly in $H^1(\T^\alpha\sm B(O,r))$ to some function $w^*$ (up to extracting a subsequence), and passing to the limit in the equation of $w^t_{r,\alpha}$ we have that $w^* = w_{r,\alpha}$. Also, we have 
$$ \int_{\T^\alpha\sm B(O,r)} |\nabla w^t_{r,\alpha}|^2 = \int_{\T^\alpha\sm B(O,r)} (I_2 - A(t) )\nabla w^t_{r,\alpha} \cdot \nabla w_{r,\alpha}^t + \int_{\T^\alpha\sm B(O,r)} \det(DT^t_{r,\alpha}) w^t_{r,\alpha},$$
so that 

$$ \Big|\int_{\T^\alpha\sm B(O,r)} |\nabla w^t_{r,\alpha}|^2 - \int_{\T^\alpha\sm B(O,r)} \det(DT^t_{r,\alpha}) w^t_{r,\alpha}\Big|\le C \|A(t) - I\|_{L^\infty}$$

and since $\|A(t) - I\|_{L^\infty} \to 0$, we deduce, 
$$ \lim_{t\to 0} \int_{\T^\alpha\sm B(O,r)} |\nabla w^t_{r,\alpha}|^2 $$
$$= \lim_{t\to0} \int_{\T^\alpha\sm B(O,r)} \det(DT^t_{r,\alpha}) w^t_{r,\alpha} = \int_{\T^\alpha\sm B(O,r)} w_{r,\alpha} = \int_{\T^\alpha\sm B(O,r)} |\nabla w_{r,\alpha}|^2.$$
From these computations we conclude that $w^t_{r,\alpha}$ converges strongly in $H^1(\T^\alpha\sm B(O,r))$ to $w_{r,\alpha}$.

Now consider the function $t^{-1}(w^t_{r,\alpha} - w_{r,\alpha})$. It satisfies for any test function $u$ 
$$ \int_{\T^\alpha\sm B(O,r)} \nabla t^{-1}(w^t_{r,\alpha} - w_{r,\alpha}) \cdot \nabla u $$
$$= \int_{\T^\alpha\sm B(O,r)} t^{-1}(I_2-A(t)) \nabla w^t_{r,\alpha}\cdot \nabla u + \int_{\T^\alpha\sm B(O,r)} t^{-1}(\det(DT^t_{r,\alpha}) - 1) u.$$
Inserting $u = t^{-1}(w^t_{r,\alpha} - w_{r,\alpha})$, since $t^{-1}(I_2-A(t)) \to -A'$ and $t^{-1}(\det(DT^t_{r,\alpha}) - 1) \to \div V_{r,\alpha}$ in $L^\infty$ and $(w^t_{r,\alpha})_t$ is bounded in $H^1$, we can bound 
$$
\int_{\T^\alpha\sm B(O,r)} |\nabla t^{-1}(w^t_{r,\alpha} - w_{r,\alpha})|^2 \le C,
$$
so that $ t^{-1}(w^t_{r,\alpha} - w_{r,\alpha})$ converges weakly in $H^1$ (up to extracting a subsequence) to some function $\dot w^*$ and passing to the limit against any test function $u$, we deduce $\dot w^* = \dot w_{r,\alpha}$. 

Now, since the convergence of $w^t_{r,\alpha}$ to $w_{r,\alpha}$ is strong $H^1$, we obtain 
$$ \lim_{t\to 0} \int_{\T^\alpha\sm B(O,r)} |\nabla t^{-1}(w^t_{r,\alpha} - w_{r,\alpha})|^2$$
$$ = -\int_{\T^\alpha\sm B(O,r)} A'_{r,\alpha}\nabla w_{r,\alpha}\cdot \nabla \dot w_{r,\alpha} + \int_{\T^\alpha\sm B(O,r)} \div(V_{r,\alpha}) \dot w_{r,\alpha}= \int_{\T^\alpha\sm B(O,r)} |\nabla \dot w_{r,\alpha}|^2, $$
so that the convergence is actually strong in $H^1$.

The function $(t,x)\mapsto v^t_{r,\alpha}(x)$ has an explicit expression and belongs to $\mathcal C^\infty([-\tau,\tau]\times\T^\alpha\sm B(O,r))$, hence it is differentiable at $t=0$. The equation satisfied by $\dot v_{r,\alpha}$ comes from differentiating the equation satisfied by $v^t_{r,\alpha}$ with respect to $t$ in a similar way as for $w_{r,\alpha}$.

Back to the computation of our derivative, we can then differentiate the right hand side in Equation~\eqref{eq:changeofvariable}, and obtain, 
$$ \frac{\d}{\d s}\Big|_{s = 0} \frac12\int_{T^s_{r,\alpha}(\T^\alpha)} |\nabla u_{r,\alpha}|^2 = \frac12\int_\T^\alpha A' \nabla u_{r,\alpha} \cdot \nabla u_{r,\alpha} + \int \nabla u_{r,\alpha}\cdot \nabla \dot u_{r,\alpha}. $$

Now testing $\dot w_{r,\alpha}$ and $\dot v_{r,\alpha}$ against $w_{r,\alpha}$ and $v_{r,\alpha}$ in their respective equations, we deduce,
$$D_E\bar J_r(\T^\alpha\sm B(O,r))(V_{r,\alpha}) = \int_{\T^{\alpha}\sm B(O,r)}S(w_{r,\alpha}):DV_{r,\alpha},$$ 
$$\begin{aligned}D_E\bar H_r(\T^\alpha\sm B(O,r))(V_{r,\alpha}) =&\int_{\T^{\alpha}\sm B(O,r)}S(v_{r,\alpha}):DV_{r,\alpha} \\
&+\int_{[A,B]} g_{\alpha}' v_{r,\alpha}, 
\end{aligned}$$
with 
$$ S(u) = \left(-\frac12 |\nabla u|^2 + u \right) I_2 + \nabla u\otimes \nabla u.$$

To compute the derivatives of $D_E\bar J_r(T^t_{r,\alpha}(\T^\alpha\sm B(O,r)))(V_{r,(1+t)\alpha})$ and $D_E\bar H_r(T^t_{r,\alpha}(\T^\alpha\sm B(O,r)))(V_{r,(1+t)\alpha})$ we also follow the main ideas of Laurain in \cite[Proposition 7]{L20}, but here we have to take into account that $V_{r,(1+t)\alpha}$ depends on the time $t$. 

We first take care of the volume term. Applying the change of variable, $x\mapsto T^t_{r,\alpha}(x)$, we obtain, for $u \in \{v,w\}$,
$$ \int_{\T^{(1+t)\alpha}\sm B(O,r)} S(u_{r,(1+t)\alpha}) : DV_{r,(1+t)\alpha}$$
$$=\int_{\T^\alpha\sm B(O,r)}\det(DT^t_{r,\alpha}) S(u_{r,(1+t)\alpha})\circ T^t_{r,\alpha} : DV_{r,(1+t)\alpha}\circ T^t_{r,\alpha} $$
$$=\int_{\T^\alpha\sm B(O,r)}\det(DT^t_{r,\alpha}) S(u_{r,(1+t)\alpha})\circ T^t_{r,\alpha} : D(V_{r,(1+t)\alpha}\circ T^t_{r,\alpha})(DT^t_{r,\alpha})^{-1}.$$

Differentiating the right hand side, we find 
$$ 
\frac{\d}{\d t}\Big|_{t=0} \int_{\T^{(1+t)\alpha}\sm B(O,r)} S(u_{r,(1+t)\alpha}): DV_{r,(1+t)\alpha} $$
$$=\int_{\T^\alpha} \div V_{r,\alpha} S(u_{r,\alpha}):DV_{r,\alpha} + \int_{\T^\alpha} S'_u:DV_{r,\alpha} + \int_{\T^\alpha}S(u_{r,\alpha}):DW_{r,\alpha} - \int_{\T^\alpha} S(u_{r,\alpha}) : (DV_{r,\alpha})^2,
$$
where $S'_u$ has expression, 
$$\begin{aligned}
S'_u =& (-\nabla u_{r,\alpha}\cdot \nabla \dot u_{r,\alpha}+ DV_{r,\alpha}\nabla u_{r,\alpha}\cdot \nabla u_{r,\alpha}+ \dot u_{r,\alpha}) I_2\\
& + \nabla u_{r,\alpha}\otimes \nabla \dot u_{r,\alpha} + \nabla \dot u_{r,\alpha} \otimes \nabla u_{r,\alpha} - (DV_{r,\alpha}^T\nabla u_{r,\alpha} \otimes \nabla u_{r,\alpha} + \nabla u \otimes DV_{r,\alpha}^T\nabla u).
\end{aligned}$$

Now considering the border term, since $g_\alpha'$ is explicit and that $\int_{[A,B]}g_\alpha' = 0$ we may express it
$$\begin{aligned}
\int_{[A,B]}g_{\alpha}' v_{r,\alpha} = \int_0^{\sin(\alpha)} &\Big(\alpha\sin(\alpha) - \frac{\cos(\alpha)}2 - y \arctan\Big(\frac y{\cos(\alpha)}\Big) + \frac{\cos(2\alpha)}{2\alpha}\frac{\cos(\alpha)}{\cos(\alpha)^2 + y^2}\Big) \\
 &\Big(\frac{\sin(2\alpha)}{8\alpha}\ln(\cos(\alpha)^2 + y^2) -\frac{y^2}4\Big) \d y.
\end{aligned}$$
The function, $\alpha \to \int_{[A,B]} g_\alpha' v_{r,\alpha}$ is in $\mathcal C^\infty((0,\pi/2))$ so that it remains bounded as well as its derivative for $\alpha \in [\alpha_m,\alpha_M]$. Consequently, since it holds 
$$ \frac{\d}{\d t}\Big|_{t = 0 }\int_{[A_t,B_t]} g_{(1+t)\alpha}' v_{r,(1+t)\alpha} = \alpha \frac{\d}{\d \alpha}\int_{[A,B]}g_{\alpha}' v_{r,\alpha}, $$
we deduce that this term also remains bounded for $\alpha \in (\alpha_m,\alpha_M)$.

Putting all of these together, we find that the difference $J_r'' - \tilde H_r''$ satisfies for any $\alpha \in [\alpha_m,\alpha_M]$,
$$ 
\begin{aligned}
\alpha^2 J_r''(\alpha) - \alpha^2 \tilde H_r''(\alpha) =& \int_{[A,B]} g_\alpha'v_{r,\alpha} - \frac{\d}{\d t}\Big|_{t = 0 }\int_{[A_t,B_t]}g_{(1+t)\alpha}' v_{r,(1+t)\alpha}\\
& +\int_{\T^\alpha} M_{r,\alpha}^1\nabla w_{r,\alpha}\cdot \nabla w_{r,\alpha} - \int_{\T^\alpha} M_{r,\alpha}^1\nabla v_{r,\alpha}\cdot \nabla v_{r,\alpha}\\
& + \int_{\T^\alpha} M_{r,\alpha}^2 \nabla w_{r,\alpha}\cdot\nabla \dot w_{r,\alpha} - \int_{\T^\alpha} M_{r,\alpha}^2 \nabla v_{r,\alpha}\cdot\nabla \dot v_{r,\alpha}\\
& + \int_{\T^\alpha} \Phi_{r,\alpha}^1 (w_{r,\alpha} - v_{r,\alpha}) + \int_{\T^\alpha} \Phi_{r,\alpha}^2 (\dot w_{r,\alpha} - \dot v_{r,\alpha}).
\end{aligned}
$$
Above, the expressions of the coefficients of the two matrices $M_{r,\alpha}^1$ and $M_{r,\alpha}^2$ and the two functions $\Phi_{r,\alpha}^1$ and $\Phi_{r,\alpha}^2$ depend only on products and sums of the first spatial derivatives of $V_{r,\alpha}$ and $W_{r,\alpha}$.

Computing directly the terms $\partial_i V_{r,\alpha}^j$ and $\partial_i W_{r,\alpha}^j$ for $i,j\in\{x,y\}$, we find that they remain bounded for $\alpha\in[\alpha_m,\alpha_M]$ and $r< r^*$ independently of $\alpha$ and $r$. Also, since the two integral terms on the segment $[A,B]$ are bounded independently of $\alpha$ and $r$, for a constant $C(\alpha_m, \alpha_M)$ depending only on $\alpha_m$ and $\alpha_M$ we get

$$ 
\begin{aligned}
\alpha^2 |J_r''(\alpha) - \tilde H_r''(\alpha)| \le C(\alpha_m,\alpha_M) \big [ 1 &+ \|\nabla w_{r,\alpha} - \nabla v_{r,\alpha}\|_{L^2} (\|\nabla w_{r,\alpha}\|_{L^2} + \|\nabla v_{r,\alpha}\|_{L^2} + \|\nabla \dot v_{r,\alpha}\|_{L^2})\\
&+ \|\nabla \dot w_{r,\alpha} -\nabla \dot v_{r,\alpha}\|_{L^2}\|\nabla w_{r,\alpha}\|_{L^2}\\
&+ \|w_{r,\alpha} - v_{r,\alpha}\|_{L^1} + \|\dot w_{r,\alpha} - \dot v_{r,\alpha}\|_{L^1} \big ].
\end{aligned}
$$

Now since $g^\alpha$ is explicit and $\int_{[A,B]} g^\alpha = 0$, we can express,
$$ \int_{[A,B]} g^\alpha v_{r,\alpha} = \int_0^{\sin(\alpha)}\Big(\frac{\sin(2\alpha)}{4\alpha}\frac{\cos(\alpha)}{\cos(\alpha)^2 + y^4} -\frac{\cos(\alpha)}2\Big)\Big(\frac{\sin(2\alpha)}{8\alpha}\ln(\cos(\alpha)^2+y^2)-\frac{y^2}{\cos(\alpha)^2}\Big)\d y$$
which is $C^\infty$ on $(0,\pi/2)$ so that its second derivative remains bounded on $(\alpha_m, \alpha_M)$.

Then we can bound
\be \label{eq:2ndboundsJH}
\begin{aligned}
|J_r''(\alpha) - H_r''(\alpha)| \le C(\alpha_m,\alpha_M) \big [ 1 &+ \|\nabla w_{r,\alpha} - \nabla v_{r,\alpha}\|_{L^2} (\|\nabla w_{r,\alpha}\|_{L^2} + \|\nabla v_{r,\alpha}\|_{L^2} + \|\nabla \dot v_{r,\alpha}\|_{L^2})\\
&+ \|\nabla \dot w_{r,\alpha} -\nabla \dot v_{r,\alpha}\|_{L^2}\|\nabla w_{r,\alpha}\|_{L^2}\\
&+ \|w_{r,\alpha} - v_{r,\alpha}\|_{L^1} + \|\dot w_{r,\alpha} - \dot v_{r,\alpha}\|_{L^1} \big ].
\end{aligned}
\ee

We now prove the following Lemma,
\begin{lem}\label{lem:SecondOrderBounds}
The following limits hold uniformly on $[\alpha_m,\alpha_M]$,
\begin{itemize}
    \item[(1)] $$ \lim_{r\to0} \frac{\|\dot w_{r,\alpha}-\dot v_{r,\alpha}\|_{H^1(\T^\alpha)}}{|\ln(r)|} = 0;$$
    \item[(2)]$$ \lim_{r\to0} \frac{\|\nabla (\dot w_{r,\alpha} - \dot v_{r,\alpha})\|_{L^2(\T^\alpha)}}{\sqrt{|\ln(r)|}} = 0.$$
\end{itemize}
And there exists a constant $C(\alpha_m,\alpha_M)$ such that for any $r<r^*$ and $\alpha\in(\alpha_m,\alpha_M)$,
\begin{itemize}
    \item[(3)] $$ \frac{\|\nabla \dot w_{r,\alpha}\|_{L^2(\T^\alpha)}}{\sqrt{|\ln(r)|}} \le C(\alpha_m,\alpha_M);$$
    \item[(4)] $$ \frac{\|\nabla \dot v_{r,\alpha}\|_{L^2(\T^\alpha)}}{\sqrt{|\ln(r)|}} \le C(\alpha_m,\alpha_M).$$
\end{itemize}
\end{lem}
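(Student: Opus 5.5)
Set $\dot\psi:=\dot w_{r,\alpha}-\dot v_{r,\alpha}$ and work throughout with $\alpha\in[\alpha_m,\alpha_M]$ and $r<r^*<\cos\alpha_M$, so that $\bar A=A$ and all first derivatives of $V_{r,\alpha}$ (hence $A'_{r,\alpha}$ and $\div V_{r,\alpha}$) are bounded by a constant $C(\alpha_m,\alpha_M)$.

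\textbf{Step 1: the bounds (3)--(4).} Test the equation for $\dot w_{r,\alpha}$ with $u=\dot w_{r,\alpha}$. Since $\dot w_{r,\alpha}$ vanishes on $\partial B(O,r)$, it satisfies a Poincar\'e inequality on $\T^\alpha\sm B(O,r)$ with constant uniform in $r$ (extend by zero and argue as in Lemma~\ref{lem:PoincarePsi}). This gives
\[
\|\nabla\dot w_{r,\alpha}\|_{L^2}^2\le C\|\nabla w_{r,\alpha}\|_{L^2}\|\nabla\dot w_{r,\alpha}\|_{L^2}+C\|\dot w_{r,\alpha}\|_{L^2},
\]
so $\|\nabla\dot w_{r,\alpha}\|_{L^2}\le C(1+\|\nabla w_{r,\alpha}\|_{L^2})$. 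Lemma~\ref{lem:zeroOrderBounds}(4) then yields (3). The same argument applies to $\dot v_{r,\alpha}$, using Lemma~\ref{lem:zeroOrderBounds}(3). The only additional term is the boundary term $\int_{[A,B]}g'_\alpha u$, which is bounded by $C\|u\|_{H^1}$ through the trace inequality, since $g'_\alpha$ is explicit and bounded. Alternatively, $\dot v_{r,\alpha}$ can be differentiated explicitly: $\partial_t v^t$ involves $\partial_\alpha$ of the coefficient $\frac{\sin2\alpha}{4\alpha}$ times $\ln(|x|/r)$, together with $\nabla v\cdot V_{r,\alpha}$, and both have $L^2$ gradient of order $\sqrt{|\ln r|}$.

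\textbf{Step 2: the equation for $\dot\psi$.} Subtracting the two weak formulations gives
\[
\int\nabla\dot\psi\cdot\nabla u=-\int A'_{r,\alpha}\nabla\psi_{r,\alpha}\cdot\nabla u-\int_{[A,B]}g'_\alpha u .
\]
Testing with $u=\dot\psi$ and using Lemma~\ref{lem:zeroOrderBounds}(2), i.e.\ $\|\nabla\psi_{r,\alpha}\|_{L^2}\le C$, together with the trace bound, gives $\|\nabla\dot\psi\|_{L^2}\le C(\alpha_m,\alpha_M)$ uniformly in $r$. Adding Poincar\'e gives the same bound for $\|\dot\psi\|_{H^1}$. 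Dividing by $|\ln r|$ or by $\sqrt{|\ln r|}$ then proves (1) and (2), uniformly on $[\alpha_m,\alpha_M]$.

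\textbf{Main obstacle.} The delicate point is the Poincar\'e inequality for $\dot\psi$. It only vanishes on a disk of radius $r\to0$, and the capacity of that disk degenerates like $1/|\ln r|$, so a naive constant could blow up like $|\ln r|$. This only costs a factor $\sqrt{|\ln r|}$, which is still compatible with (1). For (2), however, I would imitate the Cioranescu--Murat argument of Lemma~\ref{lem:PoincarePsi}. Test the $\dot\psi$ equation with $z_{r,\alpha}$ to control the mean of $\dot\psi$ on $\partial B(O,\cos\alpha)$. The right-hand side gives an error $\int A'\nabla\psi\cdot\nabla z+\int g'_\alpha$, which is $O(1/\sqrt{|\ln r|})$ because $\|\nabla z_{r,\alpha}\|_{L^2}\sim|\ln r|^{-1/2}$. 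After that, the Steklov and Robin eigenvalue steps of Lemma~\ref{lem:PoincarePsi} carry over verbatim. Should this step prove troublesome, the weaker bound $\|\nabla\dot\psi\|_{L^2}=O(|\ln r|^{1/4})$, obtained via the $\sqrt{|\ln r|}$-Poincar\'e constant, would still suffice for (2).
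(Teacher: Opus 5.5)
There is a genuine gap at the one delicate point: the bound $\|\nabla\dot\psi\|_{L^2}=o(\sqrt{|\ln r|})$, which (2) needs and which your scheme for (1) also uses.

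\textbf{The uniform Poincar\'e inequality you rely on is false.} In Step 2 you close the energy estimate with the trace inequality $\int_{[A,B]}u^2\le C_T(\int u^2+\int|\nabla u|^2)$. That only works if $\dot\psi$ satisfies a Poincar\'e inequality uniform in $r$; you assert the same for $\dot w_{r,\alpha}$ in Step 1. This fails for functions that merely vanish on $B(O,r)$. Indeed, $z_{r,\alpha}$ equals $1$ on $[A,B]$, while $\int|\nabla z_{r,\alpha}|^2=\alpha/\ln(\cos\alpha/r)$. The correct constant is $D|\ln r|$, as in \eqref{eq:claim}.

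\textbf{Consequences for your argument.} With the constant $D|\ln r|$, the boundary term is bounded only by $C\sqrt{|\ln r|}\,\|\nabla\dot\psi\|$. So the loss enters the gradient bound itself, not just the $L^2$ part. You get $\|\nabla\dot\psi\|=O(\sqrt{|\ln r|})$ and $\|\dot\psi\|_{H^1}=O(|\ln r|)$, i.e. only boundedness of the ratios in (1) and (2).
\begin{itemize}
\item Your remark ``only costs a factor $\sqrt{|\ln r|}$, still compatible with (1)'' is therefore wrong.
\item Your fallback is also wrong: the $\sqrt{|\ln r|}$-Poincar\'e route gives $O(\sqrt{|\ln r|})$, not $O(|\ln r|^{1/4})$.
\item The Cioranescu--Murat step is mis-accounted. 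What is $O(|\ln r|^{-1/2})$ is $\int\nabla\dot\psi\cdot\nabla z_{r,\alpha}$. The arc integral $\int_{\partial B(O,\cos\alpha)\cap\T^\alpha}\dot\psi$ equals this times $\cos\alpha\ln(\cos\alpha/r)$, so it is only $O(\sqrt{|\ln r|})$. The Steklov--Robin steps therefore do not carry over verbatim. Tracked carefully, they give $\|\dot\psi\|_{L^2}^2\le C(\|\nabla\dot\psi\|^2+|\ln r|)$ and hence $\|\nabla\dot\psi\|=O(|\ln r|^{1/4})$. That would suffice, but it is not the argument you gave.
\item Step 1's conclusions (3)--(4) do survive with $D|\ln r|$. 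The term $\int\div(V_{r,\alpha})\,\dot w_{r,\alpha}\le C\sqrt{|\ln r|}\,\|\nabla\dot w_{r,\alpha}\|$ is of the right order.
\end{itemize}

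\textbf{How to close the gap.} The missing idea is hidden in your own error term $\int_{[A,B]}g'_\alpha$: it is zero. The paper records this, and it holds because it is the $t$-derivative at $t=0$ of $\int_{[A_t,B_t]}g^{(1+t)\alpha}\equiv 0$.
\begin{itemize}
\item Hence $\int_{[A,B]}g'_\alpha\dot\psi=\int_{[A,B]}g'_\alpha(\dot\psi-c)$ for every constant $c$.
\item Apply Poincar\'e--Wirtinger plus trace on a fixed trapezoid that avoids the hole, such as $\T^\alpha\cap\{x_1>\cos\alpha/2\}$. This bounds the boundary term by $C\|\nabla\dot\psi\|$ uniformly in $r$.
\item You then get $\|\nabla\dot\psi\|\le C(1+\|\nabla\psi_{r,\alpha}\|)\le C$. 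Your Step 2 goes through as written, and $D|\ln r|$ is needed only for the $L^2$ part of (1).
\end{itemize}

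\textbf{The paper's route.} The paper instead accepts the crude bounds $O(\sqrt{|\ln r|})$ and $O(|\ln r|)$. Then $\dot\psi/|\ln r|$ is bounded in $H^1$ with vanishing gradient, so it converges to a constant. That constant is shown to be $0$ via
\[
\int\dot\psi=-\int A'_{r,\alpha}\nabla\psi_{r,\alpha}\cdot\nabla w_{r,\alpha}-\int_{[A,B]}g'_\alpha w_{r,\alpha},
\]
obtained by testing the $\dot\psi$-equation with $w_{r,\alpha}$ and the $w$-equation with $\dot\psi$. This gives (1). Then (2) follows by reinserting $\dot\psi/|\ln r|\to0$ in $H^1$ into the energy identity for $\dot\psi$.
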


\begin{proof}
 In the following, we denote by $C$ a positive constant depending only on $\alpha_m$, $\alpha_M$ and $r^*$ which may increase from line to line. Before starting the proof of this Lemma we get the following explicit control of the Poincar\'{e} constant:\\
{\it
There exists a constant $D$ such that for any $\alpha\in(\alpha_m,\alpha_M)$ and $r<r^*$ small enough, it holds for all $u \in H^1(\T^\alpha)$ satisfying $u = 0$ on $ B(O,r)$,
\be\label{eq:claim}
\int_{\T^\alpha} u^2 \le D |\ln(r)| \int_{\T^\alpha} |\nabla u|^2.
\ee
}

To prove the claim, we first notice that since $\alpha\in[\alpha_m,\alpha_M]$ the triangles $\T^\alpha$ are uniformly Lipschitz and therefore the extension operator $H^1(\T^\alpha) \hookrightarrow H^1(S(\alpha))$ from the triangle $\T^\alpha$ to the sector $S(\alpha)$ of angle $\alpha$ and radius $1$ is uniform in $\alpha$.

Hence, it is enough to estimate 
$$ \inf\Big \{ \frac{\int_{S(\alpha)} |\nabla u|^2}{\int_{S(\alpha)} u^2} ~\Big|~ u\in H^1(S(\alpha)),\ u=0\text{ on }B(O,r)\cap S(\alpha) \Big\}.$$

For any $r<r^*$, this infimum is attained for radially symmetric function $u_r$ that is independent of $\alpha$. 
Assume $u_r = C_r$ on $\partial B_1$ with $C_r$ such that 
$$ \int_{B_1\setminus B_r} u_r^2 = 1. $$
 Then for $h_r$ the harmonic function such that $h_r = 0 $ on $\partial B_r$ and $h_r = C_r$ on $\partial B_1$. It holds 
$$ \int_{B_1\setminus B_r} |\nabla h_r|^2 \le \int_{B_1\setminus B_r} |\nabla u_r|^2. $$
And since 
$$ h_r = C_r \left(1-\frac{\ln(|x|)}{\ln(r)}\right), $$
is explicit, one may compute 
$$ \int_{B_1\setminus B_r} |\nabla h_r|^2 = \frac{2\pi\,{C_r}^2}{|\ln(r)|}, $$
and,
$$ \int_{B_1\setminus B_r} h_r^2 = \pi\,{C_r}^2 \Big(1+\frac1{\ln(r)} + \frac{1-r^2}{2\ln(r)^2}\Big). $$

Then we obtain that for $r<r^*$
$$ \int_{B_1\setminus B_r}|\nabla u_r|^2\le \frac{\int_{B_1\setminus B_r}|\nabla h_r|^2}{\int_{B_1\setminus B_r} h_r^2} \le \frac{C}{|\ln(r)|}.$$
Hence, $u_r\to \bar C$ in $H^1(B_1)$ which implies $C_r \to \bar C$ with $\bar C^2|B_1| = 1$. In the same time
$$\frac{\int_{B_1\setminus B_r}|\nabla u_r|^2}{\int_{B_1\setminus B_r} u_r^2} \ge \frac{\int_{B_1\setminus B_r} |\nabla h_r|^2 }{1} =\frac{2\pi\,{C_r}^2}{|\ln(r)|}. $$
Since $C_r \to C$, this proves the claim.

Now, let us recall the equations satisfied by $\dot w_{r,\alpha}$ and $\dot v_{r,\alpha}$, for any $u\in H^1(\T^\alpha)$, $u = 0$ on $B(O,r)$,
$$
\int_{\T^\alpha} \nabla \dot w_{r,\alpha} \cdot \nabla u = - \int_{\T^\alpha } A_{r,\alpha}'\nabla w_{r,\alpha} \cdot \nabla u + \int_{\T^\alpha} \div(V_{r,\alpha})\ u,$$ 
$$ \int_{\T^\alpha} \nabla \dot v_{r,\alpha} \cdot \nabla u = - \int_{\T^\alpha} A_{r,\alpha}'\nabla v_{r,\alpha} \cdot \nabla u + \int_{\T^\alpha} \div(V_{r,\alpha})\ u + \int_{[A,B]} g_\alpha'\,u.
$$
By definition, $g_\alpha'$ is uniformly bounded for $\alpha \in (\alpha_m,\alpha_M)$. Also since the expressions of $A'_{r,\alpha}$ and $\div(V_{r,\alpha})$ only involve first spatial derivatives of $V_{r,\alpha}$, they are in $L^\infty(\T^\alpha)$ and bounded independently of $r$ and $\alpha$ for $\alpha\in[\alpha_m,\alpha_M]$ and $r<r^*$.

We deduce that the function $\dot w_{r,\alpha} - \dot v_{r,\alpha}$ satisfies for all $u$ in $H^1(\T^\alpha)$ and vanishing on $\T^\alpha\cap B(O,r)$,
\be
\int_{\T^\alpha} \nabla (\dot w_{r,\alpha} -\dot v_{r,\alpha}) \cdot \nabla u = - \int_{\T^\alpha} A_{r,\alpha}'\nabla (w_{r,\alpha}-v_{r,\alpha}) \cdot \nabla u-\int_{[A,B]} g_\alpha'\,u.
\ee

Then, since the square of the trace constant $C_T(\alpha)$ is continuous on $(0,\pi/2)$, it is bounded on $(\alpha_m,\alpha_M)$, and we can estimate
$$
\|\nabla(\dot w_{r,\alpha} -\dot v_{r,\alpha})\|_{L^2} \le C\big( \|\nabla (w_{r,\alpha} - v_{r,\alpha})\|_{L^2}+\left(1 + D| \ln(r) |\right)^{1/2} \big).
$$
As a consequence of Lemma~\ref{lem:zeroOrderBounds}, we deduce
\be
\frac{\|\nabla(\dot w_{r,\alpha} -\dot v_{r,\alpha})\|_{L^2}}{\sqrt{|\ln(r)|}} \le C
\ee
and also, from the Poincar\'e inequality,
$$
\frac{\|\dot w_{r,\alpha} -\dot v_{r,\alpha}\|_{L^2}^2}{\ln(r)^2} \le D \frac{\|\nabla(\dot w_{r,\alpha} -\dot v_{r,\alpha})\|_{L^2}^2}{|\ln(r)|} \le C.
$$

Therefore, we obtain that 
$$ \frac{\|\dot w_{r,\alpha}-\dot v_{r,\alpha}\|_{H^1(\T^\alpha)}}{|\ln(r)|} \le C, $$
with 
$$
\frac{\|\nabla(\dot w_{r,\alpha} -\dot v_{r,\alpha})\|_{L^2}}{|\ln(r)|} \to 0,
$$
so that up to extraction, there exists a constant $\overline C$ such that
\be
\frac{\dot w_{r,\alpha} - \dot v_{r,\alpha}}{|\ln(r)|} \to \overline{C},
\ee
strongly in $H^1$ and the convergence is uniform for $\alpha\in[\alpha_m,\alpha_M]$.

In order to compute $\overline{C}$ we use the equations satisfied by $w_{r,\alpha}$ and $\dot w_{r,\alpha} - \dot v_{r,\alpha}$ to obtain 
\be\label{eq:intdotwdotvNEW}
\int_{\T^\alpha} \dot w_{r,\alpha} - \dot v_{r,\alpha}=- \int_{\T^\alpha} A_{r,\alpha}' \nabla(w_{r,\alpha}-v_{r,\alpha}) \cdot\nabla w_{r,\alpha} - \int_{[A,B]} g_\alpha' (w_{r,\alpha}-v_{r,\alpha}) -\int_{[A,B]} g_\alpha'\, v_{r,\alpha}.
\ee

From a direct computation and the fact that $\int_{[A,B]}g' = 0$, we get that for any $\alpha \in [\alpha_m,\alpha_M]$ and $r<r^*$,
$$
\Big| \int_{[A,B]} g_\alpha'\, v_{r,\alpha} \Big|\le C.
$$

Dividing by $|\ln(r)|$ in Equation \eqref{eq:intdotwdotvNEW} and taking the limit $r \to 0$ we find $\overline{C} = 0$ and this proves the first assertion. \\
From the equation satisfied by $\dot w_{r,\alpha} - \dot v_{r,\alpha}$ we also obtain 
$$
\frac{\|\nabla(\dot w_{r,\alpha} -\dot v_{r,\alpha})\|_{L^2}^2}{|\ln(r)|} = - \int_{\T^\alpha \setminus B(O,r)} A'_{r,\alpha} \nabla \frac{w_{r,\alpha}-v_{r,\alpha}}{\sqrt{|\ln(r)|}}\cdot\nabla \frac{\dot w_{r,\alpha} - \dot v_{r,\alpha}}{\sqrt{|\ln(r)|}} - \int_{[A,B]} g_\alpha' \frac{(\dot w_{r,\alpha} - \dot v_{r,\alpha})}{|\ln(r)|}
$$
which gives, passing to the limit,
\be\label{eq:limnabladotwv}
\frac{\|\nabla\dot w_{r,\alpha} -\nabla\dot v_{r,\alpha}\|_{L^2}}{\sqrt{|\ln(r)|}} \to 0,
\ee
the convergence being uniform for $\alpha\in[\alpha_m,\alpha_M]$, which proves the second assertion.

Now, from the equation satisfied by $\dot w_{r,\alpha}$ , compute 
$$ \int_{\T^\alpha} |\nabla w_{r,\alpha}|^2 = -\int_{\T^\alpha}A'_{r,\alpha} \nabla w_{r,\alpha}\cdot \nabla \dot w_{r,\alpha} + \int_{\T^\alpha} \div(V_{r,\alpha}) \dot w. $$
Then, we can estimate as a consequence of Lemma~\ref{lem:zeroOrderBounds} and the uniform Poincar\'e estimate~\ref{eq:claim} for some constant $C(\alpha_m,\alpha_M)$, for any $\alpha\in[\alpha_m,\alpha_M]$ and $r<r^*$ small,
$$ \int_{\T^\alpha} \frac{|\nabla \dot w_{r,\alpha}|}{|\ln(r)|} \le C(\alpha_m,\alpha_M). $$
And by the uniform limit given in Equation~\ref{eq:limnabladotwv}, we have that the same bounds holds for $\dot v_{r,\alpha}$, 
$$ \int_{\T^\alpha} \frac{|\nabla \dot v_{r,\alpha}|}{|\ln(r)|} \le C(\alpha_m,\alpha_M). $$

This proves the third and fourth assertion.
\end{proof}

We may now finish the proof of the second assertion of Proposition~\ref{prop:Convergence}
\begin{proof}
As a direct consequence of Lemma~\ref{lem:SecondOrderBounds}, Lemma~\ref{lem:zeroOrderBounds} and Equation~\eqref{eq:2ndboundsJH}, we deduce that uniformly on $(\alpha_m, \alpha_M)$, it holds 
$$\lim_{r\to 0} \frac{|J_r''(\alpha) - H_r''(\alpha)|}{|\ln(r)|} = 0.$$

And by the second assertion of Lemma~\ref{lem:limitH}, we also have that for $\alpha \in(\alpha_m,\alpha_M)$ and $r< r^*$,
$$ |H_r''(\alpha) - |\ln(r)| J_0''(\alpha)| \le C(\alpha_m, \alpha_M)$$

Putting the two together we obtain that uniformly on $(\alpha_m,\alpha_M)$, it holds

$$ \lim_{r\to 0} \Big|\frac{J_r''(\alpha)}{|\ln(r)|} - J_0''(\alpha) \Big|= 0 $$
\end{proof}

\section{Proof of Proposition~\ref{prop:maximalityF0}}\label{sec:proofPropMaxF0}

Note that $F_0(\pi/6,\pi/6)=\frac{3\sqrt3}{4\pi}$. Let us prove first the existence of a maximizer in $\S$ for $F_0$. Take a maximizing sequence $(\alpha^n_1,\alpha^n_2)_n\in\S$. We first observe that if $\alpha^n_1 + \alpha^n_2 \to 0$ then $\alpha^n_1\to0$ and $\alpha^n_2\to0$, so after suitable expansion of the ingredients of $F_0$, we get
\begin{equation}\label{asx01}
\lim_{n \to +\infty}F_0(\alpha^n_1, \alpha^n_2) \to \frac 14 < \frac{3\sqrt3}{4\pi}.
\end{equation}
As a consequence, there exists an $\alpha^*$ such that if $\alpha_1 + \alpha_2 < \alpha^*$, $F_0(\alpha_1, \alpha_2) < \frac{3\sqrt3}{4\pi} $.

Proving that
$$F_0(\alpha_1, \alpha_2)\le\frac{3\sqrt3}{4\pi}$$ 
is equivalent to
$$\tilde f(\alpha_1, \alpha_2)=f(\alpha_1) + f(\alpha_2) + f(\pi/2-(\alpha_1+\alpha_2)) \le 0$$
where $f : \alpha\mapsto \frac{\sin(2\alpha)^2}\alpha - \frac{3\sqrt3}\pi \sin(2\alpha)$. 
If $(\alpha^n_1,\alpha^n_2)_n\in\S$ is maximizing for $\tilde f$, three situations may occur.

\begin{itemize}
\item [(i)] If $\alpha^n_1, \alpha^n_2 \to 0$ then $\tilde f(\alpha^n_1, \alpha^n_2) \to 0$ so the assertion is proved. 

\item [(ii)] If $\alpha^n_1\to 0$, $\alpha_2^n \to a >0 $. Then
$$\tilde f(\alpha^n_1, \alpha^n_2) \to f(a)+ f(\pi/2-a).$$
For $a\in [0, \pi/4]$ elementary computations show that the maximum of $ f(a)+ f(\pi/2-a)$ equals $0$ and is attained at $a=0$, which contradicts our hypothesis $a>0$.

\item [(iii)] Assume $\alpha^n_1\to \overline \alpha_1>0$, $\alpha_2^n \to \overline \alpha_2 >0 $. If $ \overline\alpha_1= \overline \alpha_2= \pi/4$ then we are back to the previous case. If only $ \overline \alpha_1=\pi/4$ then $ \overline \alpha_2, \pi/2-( \overline \alpha_1+ \overline \alpha_2) \in (0, \pi/4)$, hence by symmetry we can assume $\overline \alpha_1, \overline \alpha_2 \in (0, \pi/4)$.
\end{itemize}

Computing the gradient of $(\alpha_1, \alpha_2) \to \tilde f(\alpha_1,\alpha_2) $ on $(0,\pi/4)^2$ and relaying on the maximality of $(\overline \alpha_1, \overline \alpha_2)$, denoting $\bar\alpha_3 = \pi/2 - (\bar\alpha_1 + \bar\alpha_2)$, we find
$$f'(\bar\alpha_1) = f'(\bar\alpha_2) = f'(\bar\alpha_3).$$
This implies that if there is an extrema $(\bar\alpha_1, \bar\alpha_2)$ with $\bar\alpha_1, \bar\alpha_2, \bar\alpha_3$ pairwise different, there must be two distinct points $\beta_1,\beta_2\in (0, \pi/2)$ such that 
$$f''(\beta_1) = f''(\beta_2)=0.$$
By elementary computation, the expression of $f''$ is 
$$f'' :\alpha \mapsto \frac{8\cos(4\alpha)}\alpha - \frac{4\sin(4\alpha)}{\alpha^2} + \frac{2\sin(2\alpha)^2}{\alpha^3} + \frac{12\sqrt3}\pi \sin(2\alpha). $$
The function $f''$ has a zero at $0$ and another one in $[0.76,0.77]$, is negative on $[0.01, 0.76]$ and positive on $[0.77, \pi/2]$. Computing $f'''$,
$$f''' : \alpha \mapsto -\frac{32 \sin(4\alpha)}\alpha - \frac{24\cos(4\alpha)}{\alpha^2} + \frac{12\sin(4\alpha)}{\alpha^3} - \frac{6\sin(2\alpha)^2}{\alpha^4} + \frac{24\sqrt3}\pi \cos(2\alpha), $$
it has its zeros in $[0.39,0.4]$ and $[1.28, 1.29]$ which implies that $f''$ has a unique $0$ in $(0,\pi/2)$. As a consequence, for any maxima $(\bar\alpha_1, \bar\alpha_2, \bar\alpha_3)$ two of the points are equal. It remains to study the maximum of a one dimensional problem and to show that for any $\alpha \in [0, \pi/4]$, 
$$h(\alpha)=2f(\alpha) + f(\pi/2 - 2\alpha) \le 0 $$
with equality only for $\alpha = 0$ or $\alpha = \pi/6$. This inequality is elementary.
The case $\alpha=0$ is out of our range.

As a consequence, $\tilde f$ is non positive on $[0, \pi/4]^2$ and vanishes only for $\bar \alpha_1=\bar\alpha_2=0$ and $\bar\alpha_1=\bar\alpha_2=\pi/6$. For the maximality of $F_0$, the case $\bar \alpha_1=\bar\alpha_2=0$ is eliminated by \eqref{asx01}. This concludes the proof. 
\qed

\section{Proof of Proposition~\ref{prop:MaxFr}}\label{sec:proofPropMaxFr}

\subsection{Proof of the existence}

We define $E_r$ the supremum of $F_r$ among all admissible couples $(\alpha_1,\alpha_2)\in\S$. We argue that for $r<r^*$ sufficiently small, a maximizer exists and is non degenerate. 

For a fixed $r>0$, $F_r$ is continuous on the set $\S$ and the set of non degenerate couples $\S_{DN,r}$ is a compact subset of $\S$. Therefore, for any $r>0$, $F_r$ admits a maximum on $\S_{DN,r}$.

Now, consider a sequence $(\alpha_{1,r}, \alpha_{2,r})$ of degenerate couples, that is, for all $r>0$, $0<\alpha_{1,r}+\alpha_{2,r} < \eta_r$, with $\sin(\eta_r) =r $. We denote by $\alpha_{3,r}$ the third angle, $\alpha_{3,r} = \pi/2-(\alpha_1+\alpha_2)$.

Notice that for any $\alpha \in [0,\pi/2)$, it holds, $|\T^{\alpha} \setminus B(O,r)| \ge (1-r)^2 |\T^\alpha|$. We can then compute, 

$$ \begin{array}{lll}
F_r(\alpha_{1,r}, \alpha_{2,r}) &\le\frac4{(1-r)^2}& \frac{J_r(\alpha_{1,r}) + J_r(\alpha_{2,r})+J_r(\pi/2 - (\alpha_{1,r}+\alpha_{2,r}))}{\sin(2\alpha_{1,r}) + \sin(2\alpha_{2,r}) + \sin(2(\alpha_{1,r}+\alpha_{2,r}))} \\
&= \frac4{(1-r)^2}& \left[ \frac{\sin(2\alpha_{1,r})+\sin(2\alpha_{2,r})}{\sin(2\alpha_{1,r}) + \sin(2\alpha_{2,r}) + \sin(2(\alpha_{1,r}+\alpha_{2,r}))} \frac{J_r(\alpha_{1,r}) + J_r(\alpha_{2,r})}{\sin(2\alpha_{1,r}) + \sin(2\alpha_{2,r})} \right.\\
&&+\left. \frac{\sin(2(\alpha_{1,r}+\alpha_{2,r}))}{\sin(2\alpha_{1,r}) + \sin(2\alpha_{2,r}) + \sin(2(\alpha_{1,r}+\alpha_{2,r}))}\frac{J_r(\pi/2-(\alpha_{1,r}+\alpha_{2,r}))}{\sin(2\alpha_{1,r}) + \sin(2\alpha_{2,r})} \right]\\
&\le \frac4{(1-r)^2}& \left[ \frac{\sin(2\alpha_{1,r})+\sin(2\alpha_{2,r})}{\sin(2\alpha_{1,r}) + \sin(2\alpha_{2,r}) + \sin(2(\alpha_{1,r}+\alpha_{2,r}))} \sup_{i = 1,2}\frac{J_r(\alpha_{i,r}) }{\sin(2\alpha_{i,r})}\right.\\
&& +\left. \frac{\sin(2(\alpha_{1,r}+\alpha_{2,r}))}{\sin(2\alpha_{1,r}) + \sin(2\alpha_{2,r}) + \sin(2(\alpha_{1,r}+\alpha_{2,r}))}\frac{J_r(\pi/2-(\alpha_{1,r}+\alpha_{2,r}))}{\sin(2(\alpha_{1,r}+\alpha_{2,r}))} \right]
\end{array}$$

If, $\eta>0$ is small enough, it holds for any $\alpha\le 2\eta$, $\alpha(1- \eta)\le\sin(\alpha)\le\alpha$, in consequence, for $r$ small enough, we obtain,

$$ F_r(\alpha_{1,r}, \alpha_{2,r}) \le \frac{2}{(1-r)^2(1-\eta_r)} \left[ \sup_{i = 1,2}\frac{J_r(\alpha_{i,r}) }{\sin(2\alpha_{i,r})} + \frac{J_r(\pi/2-(\alpha_{1,r}+\alpha_{2,r}))}{\sin(2(\alpha_{1,r}+\alpha_{2,r}))}\right]. $$

From the first assertion of Proposition~\ref{prop:Convergence}, we know that there is an $r^*$ sufficiently small, independent of $\alpha_{1,r},\alpha_{2,r}, \alpha_{3,r}$ such that for $i= 1,2,3$, 
$$\frac1{|\ln(r)|} \frac{J_r(\alpha_{i,r})}{|\T^{\alpha_{i,r}}|} \le \frac{J_0(\alpha_{i,r})}{|\T^{\alpha_{i,r}}|} + \eps. $$

Additionally, we know that
$$\lim_{\eta\to 0}\frac{J_0(\eta)}{|\T^{\eta}|} = \frac12,$$
and,
$$\lim_{\eta\to0} \frac{J_0(\pi/2-\eta)}{|\T^{\pi/2-\eta}|} = 0$$
and consequently for $r$ small, for $i=1,2$, 
$$\frac1{|\ln(r)|} \frac{J_r(\alpha_{i,r})}{|\T^{\alpha_{i,r}}|} \le \frac12 + \eps, $$
and,
$$\frac1{|\ln(r)|} \frac{J_r(\alpha_{3,r})}{|\T^{\alpha_{3,r}}|} \le\eps.$$

As a consequence, we deduce that for any $\eps >0$ for $r$ small enough it holds,
$$\frac{F_r(\alpha_{1,r}, \alpha_{2,r})}{|\ln(r)|} \le \frac14 (1+ \eps).$$
This bound is uniform and holds for any sequence of degenerate couples $(\alpha_{1,r}, \alpha_{2,r})$. So that, for any $\eps>0$ and $r<r^*$ small enough, we have
$$\sup_{\S\setminus \S_{ND,r}} \frac{F_r}{|\ln(r)|} \le \frac14 (1+\eps).$$
and in addition, 
$$\left|\frac{F_r(\pi/6, \pi/6)}{|\ln(r)|} - E_0\right| < \eps, $$
with $E_0 > 1/4$.

In conclusion, if $r<r^*$, the supremum $E_r$ of $F_r$ on $\S$ is attained and the maximizers are non degenerate.

\subsection{Proof of the limits}
Firstly, by definition, for all $(\alpha_1,\alpha_2)\in\S$,
$$ E_r \ge F_r(\alpha_1, \alpha_2). $$
Then, dividing by $|\ln(r)|$ and passing to the limit, we obtain 
$$ \liminf_{r\to0} \frac{E_r}{|\ln(r)|} \ge F_0(\alpha_1, \alpha_2), $$
and therefore,
$$\liminf_{r\to0} \frac{E_r}{|\ln(r)|} \ge E_0.$$

Now consider $(\alpha_{1,r},\alpha_{2,r})_r$ a sequence maximizing $F_r$ for all $r$, since it is bounded it converges up to extracting a subsequence to some $\bar\alpha_1, \bar\alpha_2$. If $\bar\alpha_1+\bar\alpha_2 >0$, since the convergence of $\frac{J_r}{|\ln(r)|}$ to $J_0$ is uniform on $[0,\pi/2]$ and $J_0$ is continuous, everything remains well defined and we can pass directly to the limit,
$$
\lim_{r\to 0 }\frac{E_r}{|\ln(r)|} = \lim_{r\to0}\frac{F_r(\alpha_{1,r},\alpha_{2,r})}{|\ln(r)|} = F_0(\bar\alpha_1, \bar\alpha_2) \le E_0
$$
and therefore,
$$ \lim_{r\to 0 }\frac{E_r}{|\ln(r)|} = F_0(\bar\alpha_1, \bar\alpha_2) = E_0.$$
Finally, from the uniqueness in Proposition~\ref{prop:maximalityF0}, we deduce $\bar\alpha_1 = \bar\alpha_2 = \frac{\pi}{6}$.

Now, if $\bar\alpha_1 = \bar\alpha_2 = 0$. By the first assertion of Proposition~\ref{prop:Convergence} we deduce, noting $\alpha_{3,r} = \pi/2-(\alpha_{1,r}+\alpha_{2,r})$, 
$$
\begin{aligned}
\lim_{r\to0} \frac1{|\ln(r)|}\frac{J_r(\alpha_{1,r})}{|\T^{\alpha_{1,r}}|} = \frac12,\\
\lim_{r\to0} \frac1{|\ln(r)|}\frac{J_r(\alpha_{2,r})}{|\T^{\alpha_{2,r}}|} = \frac12,\\
\lim_{r\to0} \frac1{|\ln(r)|}\frac{J_r(\alpha_{3,r})}{|\T^{\alpha_{3,r}}|} = 0.
\end{aligned}
$$
And as a consequence, doing similar manipulation as for proving the existence, we obtain
$$ \lim_{r\to 0 }\frac{E_r}{|\ln(r)|} =\lim_{r\to0} \frac{F_r(\alpha_{1,r},\alpha_{2,r})}{|\ln(r)|} = \frac14.$$
which is in contradiction with the bound $\liminf_{r\to0} E_r/|\ln(r)|\ge E_0 >1/4$.
\qed

\section{The Cheeger case }\label{sec:Cheeger}
In this section we prove Theorem~\ref{Th:cheeger}, partly following the same strategy as for the proof of Theorem~\ref{TH1}.\\
Let us define for $r>0$ and a triangle $\T$ of vertices $A_i$ the following Neumann Cheeger constant 
\be\label{df:cheegerTriangle}
h_{N}(\T,r) = \inf\left\{\frac{\per(E,\T)}{|E|}\middle| \ E\subset \T \setminus (V_\T\oplus B_r)\right\},
\ee
where by $\per(E,\T)$ we denote the relative perimeter of the set $E$ in the triangle $\T$.\\
As a first step, we prove the following.

In the case where $\T\sm [V_\T \oplus \ov{B_r}]$ is empty, we define $h_N(\T,r) = +\infty$.

\begin{lem}\label{lem:equtriangleCheeger}
For all $0<r<0.19$, the equilateral triangle $\Teq$ of circumradius $1$ is the unique minimizer of $h_{N}(\T,r)$ among all triangles $\T$ of circumradius $\Rcal(\T) = 1$.
\end{lem}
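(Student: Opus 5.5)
We plan to show that for every triangle $\T$ of circumradius $1$, $h_N(\T,r)\ge h_N(\Teq,r)$, with equality only for $\Teq$. The natural approach is to compute $h_N(\Teq,r)$ exactly and then prove a lower bound valid for all other triangles.

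\textbf{Step 1: the Cheeger set of $\Teq$ for $r$ small.} The optimal value in the remark after Theorem~\ref{Th:cheeger} suggests that the Neumann Cheeger set of $\Teq$ is the whole domain $\Teq\setminus(V_{\Teq}\oplus B_r)$. We therefore first check that $E=\Teq\setminus (V_{\Teq}\oplus B_r)$ is optimal. Its relative perimeter is $3\cdot\frac{\pi}{3}r=\pi r$ (three arcs of angle $\pi/3$), and its area is $|\Teq|-\frac{\pi r^2}{2}$. This gives
$$h_N(\Teq,r)=\frac{\pi r}{|\Teq|-\pi r^2/2}.$$
To justify optimality, we use that a relative Cheeger set has free boundary made of circular arcs of curvature $h$ meeting $\partial\T$ orthogonally. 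Any competitor $E'$ must "cut off" the Dirichlet disks. Near a vertex of angle $\theta$, the cheapest separation of the disk $B_r(A_i)$ from the rest is an arc of radius $\rho\ge r$ orthogonal to the sides. This arc has length $\theta\rho$ and removes area $\theta\rho^2/2$.

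Optimizing $\rho$ gives a one-variable problem:
$$\frac{\sum_i\theta_i\rho_i}{|\T|-\sum_i\theta_i\rho_i^2/2}.$$
Its derivative in each $\rho_i$ is positive as long as $\rho_i\,h<1$, i.e. $h<1/r$. This holds for $r$ small, so the best choice is $\rho_i=r$. Cutting across the triangle elsewhere, by a chord separating a side, costs at least a length comparable to the inradius and is excluded when $r<0.19$ by an explicit comparison.

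\textbf{Step 2: general triangles.} For a triangle $\T$ with angles $\theta_i$, non degenerate (all edges $\ge 2r$), the same reasoning gives the upper bound $h_N(\T,r)\le \pi r/(|\T|-\pi r^2/2)$, because $\sum\theta_i=\pi$. For the lower bound, the key point is that any $E\subset \T\setminus(V_\T\oplus B_r)$ has relative perimeter at least $\pi r$ minus nothing. By the vertex-wise isoperimetric argument above (a relative isoperimetric inequality in the cone at each vertex, as in the Bossel--Daners type reasoning used in Lemma~\ref{lem:PoincarePsi}), together with $|E|\le |\T|-\pi r^2/2$, we get
$$h_N(\T,r)\ge \frac{\pi r}{|\T|-\pi r^2/2}.$$
The problem then reduces to maximizing $|\T|$ among triangles with $\Rcal(\T)=1$. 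That maximum is uniquely attained by $\Teq$, which gives uniqueness. Degenerate triangles, those with an edge shorter than $2r$ or an obtuse angle with a short height, must be handled separately. There, overlapping disks only increase the removed portion, and we would bound $|\T\setminus(V_\T\oplus B_r)|$ above by a strictly smaller quantity, or use the reflection argument of Lemma~\ref{d71}.

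\textbf{Main obstacle.} The delicate step is the lower bound on the relative perimeter: showing that every admissible $E$ has $\per(E,\T)/|E|$ at least the value attained by the full domain. One cannot literally claim $\per(E,\T)\ge\pi r$, since a small $E$ far from the vertices has small perimeter but also small area. The actual argument must compare ratios. We would try to show that the Cheeger set exists and contains a large part of the triangle, and then apply the vertex-cone isoperimetric inequality to the complement near each vertex. Ruling out Cheeger sets that avoid a neighbourhood of a whole side is exactly where the explicit threshold $r<0.19$ enters. We would handle it by comparing $h_N(\Teq,r)$ with the Cheeger constant of the triangle minus a strip near an edge, which requires explicit numerics.
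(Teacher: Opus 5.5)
There is a genuine gap, and it is the one you flag yourself as the ``main obstacle'': you never bound $h_N(\T,r)$ from below when the Neumann Cheeger set is not all of $\T\sm(V_\T\oplus\ov{B_r})$.

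Worse, the inequality you state in Step~2, $h_N(\T,r)\ge \pi r/(|\T|-\pi r^2/2)$ for every non degenerate $\T$, is false. Take the isosceles triangle of circumradius $1$ whose base $A_1A_2$ has length $2\sin\gamma$ slightly above $2r$. It is acute and non degenerate, with area about $2r$, so your right-hand side is roughly $\pi/2$ or more. Now take the part of the triangle above the chord parallel to the base at height $r$ (this chord clears the two base discs), minus the apex disc. Its perimeter-to-area ratio is about $2/(2-r)$ up to an $O(r)$ correction, far below $\pi/2$. No vertex-wise cone isoperimetric inequality can repair this, because the failure comes from the Cheeger set detaching from a short side, not from the corners.

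Step~1 has the same weakness for $\Teq$. The arcs centred at the vertices are concave as seen from $E$, so they are not admissible free boundaries (those have curvature $+h$). Optimizing over this one-parameter family only compares the full domain with particular test sets. Incidentally, $\partial_{\rho_i}$ of your ratio equals $\frac{\theta_i}{D}(1+h\rho_i)>0$, with no condition on $h$. The ``chord separating a side'' case is simply asserted.

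The missing idea is that for bad triangles you need no sharp bound, only a crude one that beats $h_N(\Teq,r)$. The paper argues by a dichotomy. Either $\T\sm(V_\T\oplus\ov{B_r})$ is self Neumann Cheeger, or the free boundary of its Cheeger set $E$ contains an arc joining two discs $B_r(A_i)$, $B_r(A_j)$; two overlapping discs play the same role.

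In the second case, put $A_i,A_j$ on the horizontal axis. Every vertical slice of $E$ then starts on a portion of $\partial E$ that is counted in $\per(E,\T)$ (the arc or a disc), and has length at most $2$. The one-dimensional inequality $\int_0^l|v'|\ge\frac1l\int_0^l|v|$ for $v(0)=0$ then gives $h_N(\T,r)\ge\frac12$.

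On the other side, testing $\Teq$ with its full domain gives $h_N(\Teq,r)\le\frac{4\pi r}{3\sqrt3-2\pi r^2}$, which is $<\frac12$ for all $r<0.19$. That is exactly where the threshold comes from; no further numerics are needed. This single comparison shows at once that $\Teq\sm(V_{\Teq}\oplus\ov{B_r})$ is self Cheeger and that every non self Cheeger triangle is strictly worse.

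For the remaining triangles the value is $\pi r/(|\T|-\pi r^2/2)$, since the three arcs have total angle $\pi$. Your area-maximization argument then finishes the proof, with obtuse triangles removed beforehand by the reflection of Lemma~\ref{d71}.
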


\subsection{Proof of Lemma~\ref{lem:equtriangleCheeger}}
First, if $\T$ has an obtuse angle and satisfies, $|\T\sm{V_\T\oplus \ov{B_r}}| > 0$, with the same symmetrization procedure as for the averaged torsion, we can find a right or acute triangle which is a better competitor. \ 

Let $0<r<0.19$ and consider $\T$ a right or acute triangle of circumradius $\Rcal(\T)=1$ such that $|\T\sm [V_\T \oplus \ov{B_r}] >0$. The Neumann Cheeger set of $\T \sm (V_\T \oplus \ov {B_r})$ exists and consists of arcs of circle inside $\T \sm (V_\T \oplus \ov {B_r})$ which touch tangentially the boundary of the balls and orthogonally the rest of the boundary. If a triangle $\T$ is such that $\T \sm (V_\T \oplus \ov {B_r})$ is a self Neumann Cheeger set then the three balls $B_r(A_i)$ do not overlap. Of course, not all triangles for which the balls do not overlap lead to a self Cheeger $\T \sm (V_\T \oplus \ov {B_r})$, however such triangles can not be optimal. Assume that an arc joins two of the balls $B_r(A_i)$ and align the triangle with these two vertices on the horizontal axis. Then
\[\begin{split}
h_{N}(\T,r)&=\inf\left\{\frac{\int _\T |\nabla u|}{\int _\T |u|} : u \in C^\infty (\ov \T), u= 0 \mbox{ on } V_\T\oplus \ov {B_r}\right\}\\
&\ge\inf\left\{\frac{\int _\T |u_y|}{\int _\T |u|} : u \in C^\infty (\ov \T), u= 0 \mbox{ on } V_\T\oplus \ov {B_r}\right\}\ge \frac 12.
\end{split}\]
The last inequality above, comes by vertical slicing of $\T$ using that if $v \in C^\infty ([0,l))$, $v(0)=0$ then $\int_0^l |v'|\ge \frac 1l \int_0^l |v|$ and any slice has length not larger than $2$. In the same time, taking as test $E= \Teq\sm V_{\Teq}\oplus \ov {B_r}$ in $\Teq \sm V_{\Teq}\oplus \ov {B_r}$, we get, since for the equilateral triangle the three balls do not overlap, 
$$h_{N}(\Teq,r)\le \frac{4\pi r}{3\sqrt{3} -2\pi r^2}<\frac 12.$$
The previous arguments imply two facts:
\begin{itemize}
\item any triangle for which the relative boundary of the Neumann Cheeger set has an arc connecting two discs can not be optimal since the value of the Neumann Cheeger constant is larger than $\frac12$;
\item $\Teq \sm (V_{\Teq}\oplus \ov B_r)$ is a self Neumann Cheeger set.
\end{itemize} 
Now, let $\T$ be a triangle of circumradius $\Rcal(\T) = 1$. If $\T \sm (V_\T \oplus \ov {B_r})$ is not a self Neumann Cheeger set, then there exists an arc connecting two discs, so its Neumann Cheeger constant is larger than $\frac 12$. If it is a self Neumann Cheeger set then its Neumann Cheeger constant equals
$$h_{N}(\T,r) = \frac{\pi r}{|\T| -\frac{\pi r^2}{2}}.$$
Since among all triangles of circumradius $1$ the one with largest area is equilateral, we conclude the proof of the Lemma.
\qed

\subsection{Proof of Theorem~\ref{Th:cheeger}}

Let $\eps >0$, and take $\O \in \A_\eps$ such that $\rho(\O) = 1$. We denote by $Z$ the set of the centers $z$ of the balls associated to $\O$ such that $ \O =\Rtwo\setminus \big(Z+\overline{B(0,\eps)}\big)$.
 
Note that we may assume, without loss of generality, that $Z$ discrete with no accumulation points. Indeed similarly to the construction used in \cite[Theorem 1.2]{BB23}, we make a decreasing outer approximation $Z_n$ of $Z$ on the square grid $\frac 1{2^n} \Z \times \frac 1{2^n} \Z$ and construct the associated sequence of sets $(\O_n)$ defined by $\O_n = \Rtwo \sm (Z_n\oplus \ov B(0,r))$. Then the sequence $(\O_n)$ converges in the hausdorff complement sense to $\O$ and therefore $h(\O_n) \to h(\O)$ and $\rho(\O_n) \to 1^-$. Let us drop the index $n$ and work with such a domain.

We define $\TT(Z)$ a Delaunay triangulation of $Z$. 

Consider now $E\subset \O$ bounded open, a competitor set for the Cheeger constant of $\O$. We denote by $\TT_E$ the subset of $\TT(Z)$ of triangles intersecting $E$, and we can make the following decomposition
$$
\frac{\per(E)}{|E|} \ge \frac{\sum_{T\in \TT_E } \per(E\cap \T,\T)}{\sum_{\T\in \TT_E } |E\cap \T|} \ge \inf_{\T\in \TT_E} \frac{\per(E\cap \T,\T)}{|E\cap \T|} \ge \inf_{\T\in \TT_E} h_N(\T,\eps),
$$
notice here that $E$ being bounded, the set $\TT_E$ is finite. We denote by $\bar \T$ the triangle acheiving the minimum. Since $\TT(Z)$ is a Delaunay triangulation, by the empty circle property, the triangle $\bar\T$ has circumradius $R(\bar\T) \le 1+\eps$ and we denote $\tilde \T = \frac1 {R(\bar\T)} \bar \T$ which has circumradius $1$. From the scaling law of the Neumann Cheeger constant and the fact that it is decreasing for the inclusion of the Dirichlet regions, we get,
$$ \frac{\per(E)}{|E|}\ge\frac1{(1+\eps)} h_N(\tilde \T, \frac{\eps}{1+\eps}) $$
And Lemma~\ref{lem:equtriangleCheeger} implies that if
\be \label{eq:defeps*}
\eps \le \eps^* = \frac{r^*}{1-r^*} \approx 0.23,
\ee
it holds
$$\frac{\per(E)}{|E|}\ge\frac1{(1+\eps)} h_N(\Teq, \frac{\eps}{1+\eps}).$$
This last inequality being true for any competitor set $E$, we can take the infimum and obtain 
$$h(\O)\ge\frac1{(1+\eps)} h_N(\Teq, \frac{\eps}{1+\eps}).$$

Let us now prove that 
$$\O^* = \Rtwo\setminus \big(\Zhex_\eps + \overline{B(0,\eps)}\big),$$
achieves the minimum.

We define the sequence $(E_n)_n$ as 
$$\forall n, \qquad E_n = B(0,n) \cap \O^*.$$
Then, we can compute 
$$
\begin{aligned}
|E_n|&=\frac{\pi n^2}{(1+\eps)^2 |\Teq|} (1+\eps)^2 \left(|\Teq|- \frac{\pi}{2}\left(\frac{\eps}{1+\eps}\right)^2\right) + O(n),\\
\per(E_n) &= \frac{\pi n^2}{(1+\eps)^2 |\Teq|} (1+\eps) \left( \pi \frac{\eps}{1+\eps}\right) + O(n),
\end{aligned}
$$
which yields 
$$h(\O^*)\le\frac{1}{1+\eps} h_N(\Teq, \frac{\eps}{1+\eps}) + O(1/n).$$

Finally, taking the limit we obtain the desired result.
\qed

\bigskip

\noindent{\bf Acknowledgments.} GB is member of the Gruppo Nazionale per l'Analisi Matematica, la Probabilit\`a e le loro Applicazioni (GNAMPA) of the Istituto Nazionale di Alta Matematica (INdAM).


\bigskip\noindent
Dorin Bucur: Laboratoire de Math\'ematiques (LAMA), Universit\'e de Savoie\\
Campus Scientifique - 73376 Le-Bourget-Du-Lac, FRANCE\\
{\tt dorin.bucur@univ-savoie.fr}\\
{\tt https://bucur.perso.math.cnrs.fr/}

\bigskip\noindent
Giuseppe Buttazzo: Dipartimento di Matematica, Universit\`a di Pisa\\
Largo B. Pontecorvo 5, 56127 Pisa - ITALY\\
{\tt giuseppe.buttazzo@unipi.it}\\
{\tt http://www.dm.unipi.it/pages/buttazzo/}

\bigskip\noindent
Alexis de Villeroch\'e: Laboratoire de Math\'ematiques (LAMA), Universit\'e de Savoie\\
Campus Scientifique - 73376 Le-Bourget-Du-Lac, FRANCE\\
{\tt alexis.devilleroche@univ-smb.fr}\\
{\tt https://alexisdevilleroche.perso.math.cnrs.fr/}

\end{document}